\DeclareTextFontCommand{\emph}{\color{blue}\em}
\newcommand{\bigdot}{\boldsymbol{\cdot}}
\newcommand\precdot{\mathrel{\ooalign{$\prec$\cr
  \hidewidth\raise0.001ex\hbox{$\cdot\mkern0.6mu$}\cr}}}
\newtheorem{theorem}{Theorem}[section]
\newtheorem{proposition}[theorem]{Proposition}
\newtheorem{claim}[theorem]{Claim}
\newtheorem{lemma}[theorem]{Lemma}
\newtheorem{corollary}[theorem]{Corollary}
\theoremstyle{definition}
\newtheorem{ex}[theorem]{Example}
\newtheorem{definition}[theorem]{Definition}
\theoremstyle{remark}
\newtheorem*{remark}{Remark}
\newcommand{\bs}{\symbol{'134}}
\DeclareMathOperator{\wt}{wt}
\DeclareMathOperator{\codim}{codim}
\DeclareMathOperator{\AD}{AD}
\DeclareMathOperator{\TD}{TD}
\DeclareMathOperator{\ad}{ad}
\DeclareMathOperator{\td}{td}
\DeclareMathOperator{\SL}{SL}
\DeclareMathOperator{\supp}{supp}
\DeclareMathOperator{\Supp}{Supp}
\DeclareMathOperator{\Heart}{Heart}
\newcommand{\R}{\mathcal{R}}
\title{Orbit structures and complexity in Schubert and Richardson Varieties}
\author{Yibo Gao}
\address{Beijing International Center for Mathematical Research, Peking University, Beijing 100871, China}
\email{gaoyibo@bicmr.pku.edu.cn}
\author{Reuven Hodges}
\address{Department of Mathematics, University of Kansas, Lawrence, KS 66045, USA}
\email{rmhodges@ku.edu}
\date{\today}
\begin{document}
\pagestyle{plain}
\begin{abstract}
The goal of this paper is twofold. Firstly, we provide a type-uniform formula for the torus complexity of the usual torus action on a Richardson variety, by developing the notion of algebraic dimensions of Bruhat intervals, strengthening a type $A$ result by Donten-Bury, Escobar and Portakal. In the process, we give an explicit description of the torus action on any Deodhar component as well as describe the root subgroups that comprise the component. Secondly, when a Levi subgroup in a reductive algebraic group acts on a Schubert variety, we exhibit a codimension preserving bijection between the Levi-Borel subgroup (a Borel subgroup in the Levi subgroup) orbits in the big open cell of that Schubert variety and torus orbits in the big open cell of a distinguished Schubert subvariety. This bijection has many applications including a type-uniform formula for the Levi-Borel complexity of the usual Levi-Borel subgroup action on a Schubert variety. We conclude by extending the Levi-Borel complexity results to a large class of Schubert varieties in the partial flag variety.
\end{abstract}
\maketitle

\section{Introduction}
\subsection{Group orbits in the full flag variety} Let $G$ be a complex, connected, reductive algebraic group of rank $r$. Fix a maximal torus $T$ in $G$, and a Borel subgroup $B$ of $G$ containing $T$. The homogeneous space $G/B$ is a smooth projective variety known as the \emph{full flag variety}. The study of the flag variety first arose out of the need to formalize and justify the enumerative geometry of H. Schubert as laid out in Hilbert's 15th problem. These varieties are a central theme in much of the mathematics of the 20th century and beyond, with deep and fundamental connections to algebraic geometry, Lie theory, representation theory, algebraic combinatorics, and commutative algebra. 

Of particular importance has been the study of orbits and orbit closures in the flag variety. Famously, the $B$-orbits for the action of $B$ by left translation yield a cellular filtration of $G/B$; these orbits, denoted $X^{\circ}_w$, are referred to as \emph{Schubert cells} and are indexed by elements $w$ of the Weyl group $W$ of $G$. In his seminal work \cite{C94}, C. Chevalley introduced the now ubiquitous Bruhat order to describe the inclusion order of $B$-orbit closures in $G/B$. These orbit closures, or as they are more commonly known, \emph{Schubert varieties} $X_w$ for $w \in W$, are well-studied varieties that boast a rich combinatorial structure that encodes many facets of their geometry. They are known to be normal, Cohen–Macaulay, and have rational singularities~\cite{DeCon-Lak,  RR85, R85}. 

A stratification of $G/B$ via the orbits of the opposite Borel subgroup $B^-$ also exists; the closure of these orbits are the \emph{opposite Schubert varieties} $X^w$ for $w \in W$. The \emph{Richardson variety} $\mathcal{R}_{u,v}$ for $u,v \in W$ is defined to be the intersection $\mathcal{R}_{u,v} := X_v \cap X^u$. Though Richardson varieties have arisen in the literature in a number of guises, initially in the work of W. V. D. Hodge \cite{H42}, they were first introduced in their modern form by D. Kazhdan and G. Lusztig~\cite{KL80}. They were first shown to be irreducible by Deodhar~\cite{deodhar}. Later, R. W. Richardson, in whose honor the varieties are named, studied more general intersections of double cosets and their closures, proving that they are reduced and irreducible~\cite{R92}. M. Brion proved that Richardson varieties are normal, Cohen–Macaulay, and have rational singularities~\cite{B02}.

While the Borel subgroup orbits are perhaps the most fundamental class of orbits in the full flag variety, they are not the only orbits that have been explored. The orbits of $T$ in the flag variety were first studied by Klyachko~\cite{K85}. In related work, I. M. Gelfand and V. V. Serganova proved that realizable matroids correspond to torus orbits in the Grassmannian (a partial flag variety)~\cite{GS87}. This connection between matroid theory can be extended to torus orbits in the full flag variety via flag matroids~\cite[\S 6.5]{CDMS22}. 

\subsection{Torus orbits in Schubert and Richardson Varieties} It is also instructive to study group orbits and their closures within Schubert and Richardson varieties. Of particular interest in this setting is the statistic on the set of orbits of a reductive algebraic group $H$ known as the $H$-complexity, a notion which we now make precise.

If an algebraic group $H$ acts on a variety $X$ by a morphism of algebraic varieties, we say that $X$ is an $H$-variety. We denote the set of $H$-orbits in $X$ by $\mathcal{O}_{H}(X)$.

\begin{definition}\label{def:complexity}
Let $H$ be a reductive algebraic group and $B_H$ a Borel subgroup of $H$.  The \emph{$H$-complexity} of an $H$-variety $X$, denoted $c_{H}(X)$, is the minimum codimension of a $B_H$-orbit in $X$.
\end{definition}

The normal $H$-varieties with $H$-complexity equal to $0$ are the \emph{$H$-spherical varieties}. Spherical varieties generalize several important classes of algebraic varieties including toric varieties, projective rational homogeneous spaces and symmetric varieties. The birational models of a given spherical variety are classified via Luna-Vust theory using colored fans~\cite{LV83,K91}, a generalization of the fans which classify toric varieties. The birational classes themselves are classified by the spherical systems of Luna~\cite{L01}.

Unless otherwise stated, in this paper the action of any subgroup of $G$ on any subvariety of $G/B$ will always be left translation. We will refer to this as the \emph{usual action}. 

For any $w \in W$, both $X_w$ and $X^w$ are $T$-varieties for the usual action. We denote the Bruhat order on $W$ by $\leq$, writing $u \leq v$ if and only if $X_u \subseteq X_v$. For $u \leq v \in W$, $\mathcal{R}_{u,v}$ is nonempty and is a normal $T$-variety for the usual action. The maximal torus $T$ is a reductive algebraic group whose only Borel subgroup is $T$ itself. Hence, a normal $T$-variety with $T$-complexity equal to $0$ is a $T$-spherical variety (that is, a toric variety).

P. Karuppuchamy provided a succinct, root-system uniform classification of the toric Schubert varieties~\cite{K13}. E. Tsukerman and L. Williams introduced Bruhat interval polytopes to study the geometry of Richardson varieties and noted connections to torus orbits~\cite{TW15}. This connection was expanded on by E. Lee, M. Masuda, and S. Park, yielding a classification of toric Richardson varieties in $G/B$ where $G$ is of Dynkin-type $A$~\cite{LMP21a}, with further results by C. Gaetz \cite{Gaetz-Bruhat-interval-polytope} . In \cite{LMP21b}, the same authors give a classification of $T$-complexity 1 Schubert varieties in $G/B$ when $G$ is of Dynkin-type $A$. A classification of toric Richardson varieties in any full flag variety was provided by M. Can and P. Saha in \cite{mahir-toric}. 

Our first main result continues this line of research, giving a root-system uniform combinatorial formula for the $T$-complexity of a Richardson variety in any full flag variety. We do so by defining and developing the notion of \emph{algebraic dimensions} for arbitrary Bruhat intervals $[u,v]$ in Section~\ref{sec:torus-complexity-Richardson}, where we provide multiple formulas for $\ad(u,v)$. 

\begin{theorem}\label{thm:torus-complexity}
For $u\leq v \in W$, the $T$-complexity of the Richardson variety equals \[c_T(\mathcal{R}_{u,v})=\ell(v)-\ell(u)-\ad(u,v).\] Moreover, we have that \[\ad(u,v)=\max_{u\leq w\leq v}\ell(v)-\ell(w)\text{ where }\mathcal{R}_{w,v}\text{ is toric}.\]
\end{theorem}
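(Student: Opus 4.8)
The plan is to translate the complexity statement into a statement about the maximal dimension of a torus orbit in $\mathcal{R}_{u,v}$, compute that dimension from the Deodhar decomposition together with the explicit torus action on Deodhar components established earlier in the paper, and then use the resulting formula to deduce the ``moreover'' characterization.

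\emph{Reduction.} Since $T$ is its own Borel subgroup and $\mathcal{R}_{u,v}$ is irreducible of dimension $\ell(v)-\ell(u)$, Definition~\ref{def:complexity} gives $c_T(\mathcal{R}_{u,v})=\ell(v)-\ell(u)-\delta(u,v)$, where $\delta(u,v):=\max_{x\in\mathcal{R}_{u,v}}\dim(T\bigdot x)$; this maximum is attained because $\mathcal{R}_{u,v}$ is a finite union of Deodhar components, each having a generic point. Thus the first assertion is equivalent to $\ad(u,v)=\delta(u,v)$. Now write $\mathcal{R}_{u,v}=\bigsqcup_D D$, where each Deodhar component $D$ is $T$-stable and, $T$-equivariantly, isomorphic to $(\mathbb{C}^{\times})^{a_D}\times\mathbb{C}^{b_D}$ with $T$ acting diagonally through an explicit multiset $R_D$ of roots (this is precisely the description of the torus action on a Deodhar component and of its root subgroups proved above). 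On such a space the largest orbit has dimension $\operatorname{rank}_{\mathbb{Z}}\langle R_D\rangle$, attained at any point with no vanishing coordinate --- a generic, hence existent, point of $D$ --- and no orbit in $D$ exceeds this. Hence $\delta(u,v)=\max_D\operatorname{rank}_{\mathbb{Z}}\langle R_D\rangle$, and comparing the right-hand side with the definition of $\ad(u,v)$ (or with whichever of the equivalent formulas for $\ad(u,v)$ from Section~\ref{sec:torus-complexity-Richardson} is most convenient) yields $c_T(\mathcal{R}_{u,v})=\ell(v)-\ell(u)-\ad(u,v)$; note this also shows $\ad(u,v)\le r$ and that the toric case is the equality case.

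\emph{The ``moreover'' statement, easy inequality.} If $u\le w\le v$ then $X^w\subseteq X^u$, so $\mathcal{R}_{w,v}=X_v\cap X^w\subseteq X_v\cap X^u=\mathcal{R}_{u,v}$; every torus orbit of $\mathcal{R}_{w,v}$ is one of $\mathcal{R}_{u,v}$, hence $\ad(w,v)=\delta(w,v)\le\delta(u,v)=\ad(u,v)$. If in addition $\mathcal{R}_{w,v}$ is toric, then $c_T(\mathcal{R}_{w,v})=0$, so by the first part $\ell(v)-\ell(w)=\ad(w,v)\le\ad(u,v)$. Therefore $\max\{\ell(v)-\ell(w):u\le w\le v,\ \mathcal{R}_{w,v}\text{ toric}\}\le\ad(u,v)$.

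\emph{The reverse inequality, and the main obstacle.} It remains to produce $w$ with $u\le w\le v$, $\mathcal{R}_{w,v}$ toric, and $\ell(v)-\ell(w)=\ad(u,v)$; equivalently (using the first part and the previous paragraph) a $w$ with $\ad(w,v)=\ell(v)-\ell(w)=\ad(u,v)$. The strategy is to extract such a $w$ from a Deodhar component $D$ of $\mathcal{R}_{u,v}$ with $\operatorname{rank}_{\mathbb{Z}}\langle R_D\rangle=\ad(u,v)$: the positions of the reduced word of $v$ that contribute the roots $R_D$, read off from the distinguished subexpression indexing $D$, should pin down a $w$ for which $D$ reappears inside $\mathcal{R}_{w,v}$ as a top-dimensional, full-root-rank component (the component of the positive distinguished subexpression), forcing $\dim\mathcal{R}_{w,v}=\ell(v)-\ell(w)=\operatorname{rank}_{\mathbb{Z}}\langle R_D\rangle$, i.e.\ $\mathcal{R}_{w,v}$ toric. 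An alternative, more combinatorial route is to build $w$ along a saturated chain $u=x_0\precdot x_1\precdot\cdots\precdot x_m=v$ chosen greedily so as to keep $\ad(x_i,v)$ maximal for as long as the codimension permits, using that $\ad(\cdot,v)$ can be made to drop by at most one along a suitable cover whenever $\mathcal{R}_{x_i,v}$ is not toric --- equivalently, that every non-toric $\mathcal{R}_{x,v}$ admits a cover $x\precdot x'\le v$ with $\ad(x',v)=\ad(x,v)$. Establishing this ``collapsing'' step --- matching the maximal root-rank over Deodhar components of $\mathcal{R}_{u,v}$ with the existence of a toric sub-Richardson variety $\mathcal{R}_{w,v}$ of exactly that dimension --- is the point I expect to require the most care; once it is in place, combining with the easy inequality gives the claimed equality. (One may also phrase everything through moment polytopes: $\delta(u,v)=\dim\mu(\mathcal{R}_{u,v})=\dim Q_{u,v}$ for the Bruhat interval polytope $Q_{u,v}$, and one then seeks a face $Q_{w,v}$ of $Q_{u,v}$ with $\dim Q_{w,v}=\ell(v)-\ell(w)$ of maximal dimension; I would use whichever formulation makes the bookkeeping cleanest.)
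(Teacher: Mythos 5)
Your reduction of the first assertion is essentially the paper's argument: the $T$-complexity equals $\ell(v)-\ell(u)$ minus the maximal orbit dimension, and that maximum is computed component-by-component in the Deodhar decomposition, where a general point of $D_{\mathbf{u},\mathbf{v}}$ has orbit of dimension $\td(\mathbf{u},\mathbf{v})$ and the maximum over components is $\td(\mathbf{u}_+,\mathbf{v})=\ad(u,v)$ (Theorem~\ref{thm:torus-dimension-richardson}, via Lemma~\ref{lemma:DeodharcomponentUnipotentSubgroup}). One small imprecision: the Deodhar decomposition is of the open cell $\mathcal{R}_{u,v}^{\circ}$, not of $\mathcal{R}_{u,v}$; you need the extra (easy) observation that the boundary strata $\mathcal{R}_{x,y}^{\circ}$ with $u\leq x\leq y\leq v$ contribute orbits of dimension at most $\ad(x,y)\leq\ad(u,v)$. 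Your ``easy inequality'' for the second assertion is also correct.

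The genuine gap is the reverse inequality of the ``moreover'' statement, which you explicitly leave unestablished. The collapsing step you name --- every non-toric $[x,v]$ admits a cover $x\precdot x'\leq v$ with $\ad(x',v)=\ad(x,v)$ --- is precisely the content of (the dual form of) Proposition~\ref{prop:toric-ad}, and it is not a formality. The paper proves it via Claim~\ref{claim:swap-0-1}: in a rank-$2$ diamond $x\lessdot y_1,y_2\lessdot v$ inside $[u,v]$ with $\ad(u,v)-\ad(u,x)=1$, Dyer's Lemma~\ref{lem:Dyer-rank2} forces the two top edge labels to lie in the span of the two bottom ones, so one of $y_1,y_2$ already satisfies $\AD(u,y)=\AD(u,v)$. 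One then fixes a saturated chain from $v$ down to $u$, records where $\ad$ drops by $0$ or $1$, and repeatedly swaps within diamonds to push a ``zero drop'' to the top of the chain; this produces the desired cover, and induction finishes the argument. Your first (Deodhar-based) strategy --- reading $w$ off from the positions contributing $R_D$ --- is not carried out either and would still need an argument that the resulting $[w,v]$ is toric of the right length. So the proposal correctly identifies the architecture of the proof but is missing the one nontrivial combinatorial lemma on which the second assertion rests.
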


One highlight of Theorem~\ref{thm:torus-complexity} is that our method works uniformly across all Lie types, and our key lemmas work uniformly across geometric realizations for any Coxeter groups, while the theory of Bruhat interval polytopes is well-developed in type $A$, but not as developed in other types. Moreover, Theorem~\ref{thm:torus-complexity} can be seen as a generalization of the main result of \cite{escobar-complexity} to other Lie types (with different arguments). Along the way, in Theorem~\ref{thm:torus-dimension-richardson}, we are also able to explicitly describe the torus action on any \emph{Deodhar component} by enumerating the root subgroups that comprise the component, strengthening an argument of \cite{mahir-toric}.  

In the case of $u=\mathrm{id}$, Theorem~\ref{thm:torus-complexity} simplifies to a simple, type-uniform combinatorial formula for the torus complexity of a Schubert variety.
\begin{corollary}\label{cor:torus-complexity-Schubert}
For $w\in W$, the $T$-complexity of the Schubert variety $X_w$ equals \[c_T(X_w)=\ell(w)-\supp(w).\]
\end{corollary}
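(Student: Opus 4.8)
The plan is to derive Corollary~\ref{cor:torus-complexity-Schubert} as the case $u=\mathrm{id}$ of Theorem~\ref{thm:torus-complexity}. The opposite Schubert variety $X^{\mathrm{id}}$ equals all of $G/B$, so $\mathcal{R}_{\mathrm{id},w}=X_w\cap X^{\mathrm{id}}=X_w$; since also $\ell(\mathrm{id})=0$, Theorem~\ref{thm:torus-complexity} gives $c_T(X_w)=\ell(w)-\ad(\mathrm{id},w)$. Thus the whole statement amounts to the single identity $\ad(\mathrm{id},w)=\supp(w)$, which is what the rest of the argument establishes.

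To compute $\ad(\mathrm{id},w)$ I would use the explicit description of the torus action on Deodhar components (Theorem~\ref{thm:torus-dimension-richardson}), specialized to the big open cell of $X_w$, namely the Schubert cell $X^{\circ}_w=BwB/B$. This cell is an affine space of dimension $\ell(w)$ coordinatized by the root subgroups $U_\beta$ with $\beta$ running over the inversion set $\mathrm{inv}(w):=\Phi^+\cap w\Phi^-$, on which $T$ acts by scaling the $\beta$-coordinate by the character $\beta$; hence the generic $T$-orbit in $X_w$ lies in $X^{\circ}_w$ and has dimension equal to the rank of the sublattice $\Lambda_w\subseteq X^*(T)$ generated by $\mathrm{inv}(w)$. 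Thus $\ad(\mathrm{id},w)=\operatorname{rank}\Lambda_w$.

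It then remains to check the purely combinatorial fact $\operatorname{rank}\Lambda_w=\supp(w)$. Every element of $\mathrm{inv}(w)$ is a nonnegative integer combination of the simple roots $\alpha_s$ with $s\in\supp(w)$, and these are linearly independent, so $\operatorname{rank}\Lambda_w\le\supp(w)$. Conversely, fixing a reduced word $w=s_{i_1}\cdots s_{i_\ell}$ we have $\mathrm{inv}(w)=\{\beta_k:=s_{i_1}\cdots s_{i_{k-1}}\alpha_{i_k}\mid 1\le k\le\ell\}$, and $\beta_k\equiv\alpha_{i_k}$ modulo the span of $\alpha_{i_1},\dots,\alpha_{i_{k-1}}$; an induction on $k$ then shows $\alpha_{i_k}\in\Lambda_w$ for all $k$, so $\Lambda_w$ contains $\{\alpha_s:s\in\supp(w)\}$ and $\operatorname{rank}\Lambda_w\ge\supp(w)$.

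The main obstacle is the second paragraph: cleanly matching $\ad(\mathrm{id},w)$ with $\operatorname{rank}\Lambda_w$, i.e.\ reading off the torus weights on the big cell of $X_w$ from Theorem~\ref{thm:torus-dimension-richardson} and checking that this cell carries a generic $T$-orbit. Once that theorem is in place the identification is routine bookkeeping, and the rank computation above is elementary; specializing to $c_T(X_w)=0$ recovers Karuppuchamy's classification of toric Schubert varieties~\cite{K13} as the case $\ell(w)=\supp(w)$. One could alternatively deduce $\ad(\mathrm{id},w)=\supp(w)$ from the second formula of Theorem~\ref{thm:torus-complexity} by producing $w'\le w$ of length $\ell(w)-\supp(w)$ with $\mathcal{R}_{w',w}$ toric, but the direct weight computation is the shorter route.
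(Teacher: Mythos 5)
Your proof is correct, and its top-level structure is the same as the paper's: both apply Theorem~\ref{thm:torus-complexity} with $u=\mathrm{id}$ (noting $\mathcal{R}_{\mathrm{id},w}=X_w$) and reduce everything to the single identity $\ad(\mathrm{id},w)=\supp(w)$. Where you diverge is in how that identity is established. The paper's argument is two lines of pure combinatorics: by Lemma~\ref{lem:AD-spanned-by-covers}, $\AD(\mathrm{id},w)$ is spanned by the labels of the covers of $\mathrm{id}$ inside $[\mathrm{id},w]$; by the subword property (Proposition~\ref{prop:subword-property}) these covers are exactly the simple reflections $s_i$ with $\alpha_i\in\Supp(w)$, whose labels are the linearly independent simple roots, so $\ad(\mathrm{id},w)=\supp(w)$ is immediate. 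You instead return to the geometry: you coordinatize $X_w^{\circ}$ by the root subgroups indexed by $\mathcal{I}_R(w)$, identify the generic orbit dimension with $\operatorname{rank}\langle\mathcal{I}_R(w)\rangle$, and then prove $\operatorname{rank}\langle\mathcal{I}_R(w)\rangle=\supp(w)$ via the triangularity of $\beta_k=s_{i_1}\cdots s_{i_{k-1}}(\alpha_{i_k})$ with respect to the simple roots. That computation is correct, and the semicontinuity point you flag (that the maximal orbit dimension is attained on the dense cell) is fine; but it re-derives for the special case $u=\mathrm{id}$ what Theorem~\ref{thm:torus-dimension-richardson} already supplies. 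Indeed, specializing that theorem to the positive distinguished subword $\mathbf{u}_+=1\cdots 1$ gives the weights $\beta_k=\alpha_{i_k}$ directly, so the span is visibly $\mathrm{span}_{\mathbb{R}}\{\alpha_i \mid \alpha_i\in\Supp(w)\}$ and no rank lemma about the inversion lattice is needed. In short, your route is valid and self-contained on the geometric side, but the paper's combinatorial reading of Lemma~\ref{lem:AD-spanned-by-covers} (or the Deodhar parametrization itself) reaches $\ad(\mathrm{id},w)=\supp(w)$ with less work.
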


\subsection{Levi-Borel subgroup orbits in Schubert Varieties} We now turn our attention to the orbits of Levi-Borel subgroups (defined below) in a Schubert variety. Our work uncovers a deep structural relationship between the Levi-Borel orbits in the big cell of a Schubert variety and the $T$-orbits in the big cell of a distinguished Schubert subvariety. Specifically, when a Levi subgroup acts on a Schubert variety we exhibit a codimension-preserving bijection between these sets of orbits.

Our choice of $T$ and $B$ determine the \emph{root system} $\Phi$ and a set of \emph{simple roots} $\Delta=\{ \alpha_1,\ldots,\alpha_r\}$, respectively. The Weyl group $W$ of $G$, is generated by the set of simple reflections $\{s_i\!:=\!s_{\alpha_i}|\alpha_i\!\in\!\Delta\}$. For $I\!\subseteq\!\Delta$, $W_I$ is the subgroup of $W$ generated by $\{ s_{\alpha_i}\:|\: \!i\!\in\!I\}$. 

The \emph{standard parabolic subgroups} of $G$ containing $B$ are indexed by subsets of $\Delta$. The standard parabolic subgroup associated to $I$ is $P_I = B W_I B$ with Levi decomposition \[P_I = L_I \ltimes U_I,\] where $L_I$ is a reductive subgroup called a \emph{Levi subgroup}, and $U_I$ is the unipotent radical of $P_I$. Define $B_{L_I} := L_I \cap B$. Then $B_{L_I}$ is a Borel subgroup of $L_I$, and we shall refer to such subgroups as \emph{Levi-Borel subgroups}. Our second main result concerns the orbits of Levi-Borel subgroups in a Schubert cell.

\begin{theorem}
\label{theorem:torusLeviBorelBijection}
Let $w \in W$, $I \subseteq \Delta$, and let $w= { _Iw} \prescript{I}{}{w}$ be the left parabolic decomposition of $w$ with respect to $I$. The map $\mathfrak{O}\!:\!\mathcal{O}_T(X^{\circ}_{\prescript{I}{}{w}})\!\rightarrow\!\mathcal{O}_{B_{L_I}}(X_w^{\circ})$ given by $\Theta\!\mapsto \!B_{L_I} \prescript{I}{}{w} x$, 
where $x$ is any point in $\Theta$, is a surjection. If $L_I$ acts on $X_w$, then $\mathfrak{O}$ is a codimension preserving bijection. That is, \[ \dim(X_w^{\circ}) - \dim(\mathfrak{O}(\Theta)) = \dim(X^{\circ}_{\prescript{I}{}{w}}) - \dim(\Theta), \]
for every $\Theta \in \mathcal{O}_T(X^{\circ}_{\prescript{I}{}{w}})$.
\end{theorem}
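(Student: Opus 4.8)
The plan is to analyze the Schubert cell $X_w^\circ = B\dot{w}B/B$ using the parabolic decomposition $w = {_Iw}\, \prescript{I}{}{w}$ together with the Levi decomposition $B = B_{L_I} (B \cap U_I)$ (more precisely, $B \cap P_I = B_{L_I} \ltimes U_I$, where $U_I$ is the unipotent radical). Since $\ell(w) = \ell({_Iw}) + \ell(\prescript{I}{}{w})$ and $\prescript{I}{}{w}$ is the minimal-length representative of $W_I w$, the key geometric input is the fibration structure: writing $P_I/B$ as the flag variety $L_I/B_{L_I}$, the cell $X_w^\circ$ should fiber (at least set-theoretically, via the Bruhat decomposition of the finite-dimensional cell $B\dot w B/B \subseteq G/B$) with base an $L_I$-Schubert cell indexed by ${_Iw}$ and fiber the cell $X^{\circ}_{\prescript{I}{}{w}}$. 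The first step is to make this decomposition precise: show that every point of $X_w^\circ$ can be written uniquely as $b \cdot \prescript{I}{}{w} \cdot x$ with $b \in B_{L_I}$ ranging over (a section of) the $L_I$-Schubert cell for ${_Iw}$ and $x \in X^{\circ}_{\prescript{I}{}{w}}$; this is where the length-additivity of the parabolic decomposition does the real work.

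Next I would verify that the map $\mathfrak{O}$ is well-defined: if $x, x'$ lie in the same $T$-orbit $\Theta$ inside $X^{\circ}_{\prescript{I}{}{w}}$, then since $T \subseteq B_{L_I}$ we get $B_{L_I}\prescript{I}{}{w}x = B_{L_I}\prescript{I}{}{w}x'$, so the image does not depend on the choice of representative, and each fiber of $B_{L_I}$ acting on $\prescript{I}{}{w}X^{\circ}_{\prescript{I}{}{w}}$ is visibly contained in $X_w^\circ$ (it lies in $B\dot w B/B$ by length-additivity). For surjectivity, use the decomposition $B_{L_I} \dot w B = B_{L_I}\, \dot{_Iw}\, \dot{\prescript{I}{}{w}}\, B$ and the fact that $B_{L_I}$ acts transitively (by left translation, in $L_I/B_{L_I}$) on the $L_I$-Schubert cell for ${_Iw}$, so that every $B_{L_I}$-orbit in $X_w^\circ$ meets the slice $\prescript{I}{}{w}X^{\circ}_{\prescript{I}{}{w}}$. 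For injectivity, suppose $B_{L_I}\prescript{I}{}{w}x = B_{L_I}\prescript{I}{}{w}x'$; using the uniqueness in the decomposition from Step 1, conclude that $x$ and $x'$ differ by an element of $T = B_{L_I} \cap (\text{stabilizer of the slice})$, hence lie in the same $T$-orbit.

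Finally, the codimension statement: because the slice $\prescript{I}{}{w}X^{\circ}_{\prescript{I}{}{w}}$ is cut out transversally and the "extra" directions in $X_w^\circ$ (those coming from the $L_I$-Schubert cell for ${_Iw}$) are swept out freely by $B_{L_I}$, we have $\dim X_w^\circ - \dim X^{\circ}_{\prescript{I}{}{w}} = \ell({_Iw}) = \dim(B_{L_I}\text{-orbit through }\prescript{I}{}{w}x) - \dim(T\text{-orbit through }x)$ for every $x$; rearranging gives the asserted equality of codimensions. I expect the main obstacle to be Step 1 — establishing the clean product/fibration decomposition of $X_w^\circ$ compatibly with the $B_{L_I}$- and $T$-actions, and in particular pinning down exactly which subgroup of $B_{L_I}$ stabilizes the slice so that injectivity yields precisely the $T$-orbit equivalence and nothing coarser. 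Once that structural decomposition is in hand, well-definedness, bijectivity, and the codimension count follow by bookkeeping with lengths and orbit dimensions.
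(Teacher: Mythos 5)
Your plan has the same architecture as the paper's proof: exhibit a distinguished slice of $X_w^{\circ}$ isomorphic to $X^{\circ}_{\prescript{I}{}{w}}$, decompose the cell as a product over that slice, and compute stabilizers to get both the bijection and the codimension identity. But two load-bearing steps are missing, and one ingredient is set up incorrectly. First, the slice. Writing $w=ad$ with $a={_Iw}$, $d=\prescript{I}{}{w}$, the correct slice is the translate $\dot a\,X_d^{\circ}=V_d\dot wB$ with $V_d=\dot aU_d\dot a^{-1}$ (the paper's $\Heart_I(X_w)$), not $\prescript{I}{}{w}\,X^{\circ}_{\prescript{I}{}{w}}$: the latter is not even contained in $X_w^{\circ}$ (already for $G=\SL_3$, $w=s_1s_2$, $I=\{\alpha_1\}$ one checks $\dot s_2X^{\circ}_{s_2}\subseteq X_{s_2}$, which is disjoint from $X_{s_1s_2}^{\circ}$), and your justification ``lies in $B\dot wB/B$ by length-additivity'' is exactly the computation $B\dot aB\cdot B\dot dB=B\dot wB$, which applies only to the translate by $\dot a$. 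Second, the unique product decomposition of $X_w^{\circ}$ that you call ``the real work'' is the direct spanning $U_w=U_aV_d$, which rests on the inversion-set identity ${\mathcal I}_L(w)={\mathcal I}_L(a)\sqcup a({\mathcal I}_L(d))$ (Lemma~\ref{lemma:directlyspannedUw}); this is stated as a goal in your sketch but never carried out.

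The more serious gap is the one you flag yourself and do not resolve: identifying which subgroup of $B_{L_I}$ preserves the slice. Your injectivity step asserts $T=B_{L_I}\cap(\text{stabilizer of the slice})$, and this is false: the setwise stabilizer is $T\ltimes U_{\bar a}$ with $U_{\bar a}=\prod_{\alpha\in\Phi^+(I)\setminus{\mathcal I}_L(a)}U_{\alpha}$. What rescues the argument --- and what must actually be proved --- is (i) that $U_{\bar a}$ fixes \emph{every} point of $X_w^{\circ}$, which reduces to the root-combinatorial identity ${\mathcal I}_L(w)\cap\Phi^+(I)={\mathcal I}_L(a)$ (Lemma~\ref{lemma:ubaraIsotrpy}), and (ii) that any $b=tu_1u_2\in TU_aU_{\bar a}=B_{L_I}$ carrying one slice point to another must have $u_1=e$ (Lemma~\ref{lemma:orbitDynamicsRevisited}). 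Only with both facts does the equivalence induced on the slice collapse to exactly the $T$-orbit equivalence rather than something coarser, and only then does the isotropy group $(B_{L_I})_x=T_h\ltimes U_{\bar a}$ yield $\dim(B_{L_I}\cdot x)=\ell(a)+\dim(T\cdot h)$, which is the entire content of the codimension formula. Your appeal to the extra directions being ``swept out freely'' is an assertion of precisely these two lemmas, not a proof of them; as it stands the proposal is a correct outline of the paper's strategy with its essential technical core left open.
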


Theorem~\ref{theorem:torusLeviBorelBijection} allows us to intertwine the study of Levi-Borel orbits with the vast literature on torus orbits and varieties, including our own Corollary~\ref{cor:torus-complexity-Schubert}, and has myriad applications which we now detail. Our first application is Proposition~\ref{proposition:BLICodim} which provides a lower bound on the codimension of a $B_{L_I}$-orbit in $X_w$. In the case where the Levi subgroup $L_I$ acts on the Schubert variety $X_w$ this leads to a closed formula for the minimal codimension of a $B_{L_I}$-orbit in $X_w$.


While $B_{L_I}$ acts on any $X_w$, since $B_{L_I} \subseteq B$, the same is not true for $L_I$. The stabilizer of $X_w$ in $G$ for the usual action is the standard parabolic subgroup $P_{{\mathcal D}_L(w)}$ \cite[Lemma 8.2.3]{BL00}, where ${\mathcal D}_L(w)$ is the left descent set of $w$ defined in Section~\ref{sec:prelim}. Thus, the Levi subgroups $L_I \leq P_I \leq P_{{\mathcal D}_L(w)}$ for $I \subseteq {\mathcal D}_L(w)$ are a family of reductive algebraic groups which act on $X_w$. Our third main result is a root-system uniform, combinatorial formula for the $L_I$-complexity of any Schubert variety which is an $L_I$-variety in any full flag variety.  

\begin{theorem}\label{theorem:generalTypeLeviComplexity}
Let $w \in W$ and suppose $L_I$ acts on the Schubert variety $X_w$ (equivalently, $I \subseteq \mathcal{D}_L(w)$). If $w= { _Iw} \prescript{I}{}{w}$ is the left parabolic decomposition of $w$ with respect to $I$, then \[c_{L_I}(X_w) = \ell(\prescript{I}{}{w}) - \supp(\prescript{I}{}{w}).\]
\end{theorem}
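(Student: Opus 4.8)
The plan is to obtain Theorem~\ref{theorem:generalTypeLeviComplexity} from Proposition~\ref{prop:bigCellBIComplexityMax} together with a short combinatorial minimization that the hypothesis $I\subseteq\mathcal{D}_L(w)$ makes transparent. Recall the mechanism behind that proposition: stratifying $X_w=\bigsqcup_{u\le w}X^\circ_u$ into Schubert cells, each $B_{L_I}$-orbit lies in exactly one cell $X^\circ_u$, so its codimension in $X_w$ is $\ell(w)-\ell(u)$ plus its codimension in $X^\circ_u$; applying Theorem~\ref{theorem:torusLeviBorelBijection} to each $u\le w$ — together with lower semicontinuity of orbit dimension on the irreducible Schubert variety $X_{\prescript{I}{}{u}}$, which lets one pass from the cell $X^\circ_{\prescript{I}{}{u}}$ to its closure — and then Corollary~\ref{cor:torus-complexity-Schubert}, one arrives at
\[ c_{L_I}(X_w)=\min_{u\le w}\Bigl(\ell(w)-\ell(u)+\ell(\prescript{I}{}{u})-\supp(\prescript{I}{}{u})\Bigr)=\min_{u\le w}\Bigl(\ell(w)-\ell(\prescript{}{I}{u})-\supp(\prescript{I}{}{u})\Bigr), \]
the last step using $\ell(u)=\ell(\prescript{}{I}{u})+\ell(\prescript{I}{}{u})$. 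Thus the theorem reduces to showing that, when $I\subseteq\mathcal{D}_L(w)$, this minimum is attained at $u=w$ and equals $\ell(\prescript{I}{}{w})-\supp(\prescript{I}{}{w})$.

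To do this I would first record that $I\subseteq\mathcal{D}_L(w)$ is equivalent to $\prescript{}{I}{w}=w_{0,I}$, the longest element of $W_I$: one has $\mathcal{D}_L(w)\cap I=\mathcal{D}_L(\prescript{}{I}{w})$, and $w_{0,I}$ is the unique element of $W_I$ with left descent set $I$. Consequently $\ell(w)-\ell(\prescript{}{I}{w})=\ell(\prescript{I}{}{w})$, while $\ell(\prescript{}{I}{u})\le\ell(w_{0,I})=\ell(\prescript{}{I}{w})$ for every $u$ simply because $\prescript{}{I}{u}\in W_I$. Next I would invoke two standard monotonicity facts for Bruhat order: $u\le w$ forces $\prescript{I}{}{u}\le\prescript{I}{}{w}$ (Bruhat order is compatible with the projection onto minimal left-coset representatives), and $\prescript{I}{}{u}\le\prescript{I}{}{w}$ forces $\supp(\prescript{I}{}{u})\subseteq\supp(\prescript{I}{}{w})$ (if $s_i\le\prescript{I}{}{u}$ then $s_i\le\prescript{I}{}{w}$). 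Combining, for every $u\le w$,
\[ \ell(w)-\ell(\prescript{}{I}{u})-\supp(\prescript{I}{}{u})\ \ge\ \ell(w)-\ell(\prescript{}{I}{w})-\supp(\prescript{I}{}{w})\ =\ \ell(\prescript{I}{}{w})-\supp(\prescript{I}{}{w}), \]
with equality at $u=w$; this pins down the minimum and proves the theorem.

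I expect the real obstacle to be Proposition~\ref{prop:bigCellBIComplexityMax} itself — that is, verifying the displayed minimum formula, which requires controlling how the codimension-preserving bijection of Theorem~\ref{theorem:torusLeviBorelBijection} propagates through every Schubert cell of $X_w$ and checking that a $B_{L_I}$-orbit in a lower cell cannot beat the formula. Once that is established, the reduction above is routine bookkeeping. It is worth noting that the hypothesis $I\subseteq\mathcal{D}_L(w)$ is genuinely needed for the closed form: without $\prescript{}{I}{w}=w_{0,I}$ the bound $\ell(\prescript{}{I}{u})\le\ell(\prescript{}{I}{w})$ breaks, and there are $u<w$ for which $\ell(w)-\ell(\prescript{}{I}{u})-\supp(\prescript{I}{}{u})$ falls strictly below $\ell(\prescript{I}{}{w})-\supp(\prescript{I}{}{w})$ — exactly the obstruction that prevents Proposition~\ref{prop:bigCellBIComplexityMax} from collapsing to a closed form in general.
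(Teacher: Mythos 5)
Your proposal is correct and follows essentially the same route as the paper: the paper also deduces the theorem from Proposition~\ref{prop:bigCellBIComplexityMax} by observing that $I\subseteq\mathcal{D}_L(w)$ forces $_Iw=w_0(I)$, so that $w$ simultaneously maximizes $\ell(_Iu)$ (since $_Iu\in W_I$) and $\Supp(\prescript{I}{}{u})$ (via $u\le w\Rightarrow\prescript{I}{}{u}\le\prescript{I}{}{w}$), which is exactly your minimization argument packaged as Lemma~\ref{lemma:bigCellBIComplexityMaxSpecialCase}. The only cosmetic slip is writing $\supp(\prescript{I}{}{u})\subseteq\supp(\prescript{I}{}{w})$ where you mean containment of the sets $\Supp$ rather than of the cardinalities.
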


Theorem~\ref{theorem:generalTypeLeviComplexity} is a refinement and generalization of numerous earlier results. The second author and A. Yong initiated a study of $L_I$-complexity $0$ Schubert varieties in \cite{HY22}, and therein conjectured a classification in terms of spherical elements of a Coxeter group. Subsequently, both authors of this paper and A. Yong proved this conjecture first for $G$ of Dynkin-type $A$~\cite{GHY23}, and later for any full flag variety \cite{GHY24} (the latter was proved contemporaneously via different methods by M. Can and P. Saha \cite{CS23-1}). Using work of R.~S. Avdeev and A.~V. Petukhov \cite{AP14}, the classification of $L_I$-complexity $0$ Schubert varieties may be interpreted as a generalization of results of P. Magyar, J. Weyman, and A. Zelevinsky \cite{MWZ99} and J. Stembridge~\cite{S01, S03} on spherical actions on (products of) flag varieties. The interested reader may see \cite[Theorem~2.4]{HY22} for the details. 

Our next step, with work in progress, is to use Theorem~\ref{theorem:torusLeviBorelBijection} to index the orbits of Levi-Borel subgroups in the Schubert variety itself and understand the inclusion order on their orbit closures. Throughout the years, the study of such Bruhat orders has led to much fruitful research. One related example, is that of the orbits of spherical Levi subgroups in Dynkin type $A$. These orbits are indexed by $(p,q)$-clans~\cite{MO90,Y97}, and their inclusion order is described via clan combinatorics~\cite{W16}.

As a final application of Theorem~\ref{theorem:torusLeviBorelBijection}, in Theorem~\ref{thm:sphericalityTransfer} we compute the $L_I$-complexity of a large class of Schubert varieties in the partial flag variety. This leads to Corollary~\ref{cor:toricClassification}, which is a root-system uniform, combinatorial classification of toric Schubert varieties in any partial flag variety; as far as the authors are aware this classification is new. 


The outline of this paper is as follows. In Section~\ref{sec:prelim}, we introduce the necessary background, including Schubert geometry and Coxeter groups. In Section~\ref{sec:torus-complexity-Richardson}, we fully develop the notion of algebraic dimensions and establish Theorem~\ref{thm:torus-complexity}. In Section~\ref{sec:levi-complexity}, we prove Theorem~\ref{theorem:torusLeviBorelBijection} and Theorem~\ref{theorem:generalTypeLeviComplexity} with one of our main tools being an equivariant isomorphism that allows us to use results in Section~\ref{sec:torus-complexity-Richardson}. Section~\ref{sec:levi-complexity} concludes with an extension of the Borel-Levi subgroup complexity results to Schubert varieties in partial flag varieties.

\section{Preliminaries}\label{sec:prelim}

\subsection{Combinatorics of the Weyl group}
Consider the following data associated to elements of the Weyl group $W$. For $w\in W$, its \emph{Coxeter length} is the smallest $\ell$ such that $w$ can be written as product of $\ell$ simple reflections. Such an expression is called a \emph{reduced word} or a \emph{reduced expression} of $w$. 

For $u,v \in W$, the product $u v$ is said to be \emph{length additive} if $\ell(uv)=\ell(u) + \ell(v)$. Let $W^I$ be the set of minimal coset representatives in $W$ of $W/W_I$. In the same way, let $\prescript{I}{}{W}$ be the set of minimal coset representatives in $W$ of $W_I \bs W$. Given $w\in W$ and $I\subseteq\Delta$, $w$ has a unique \emph{right parabolic decomposition} $w=w^I w_I$ which is length-additive with $w_I\in W_I$ and $w^I\in W^I$. Similarly, $w$ has a \emph{left parabolic decomposition} $w= { _Iw} \prescript{I}{}{w}$ which is length-additive with $_Iw\in W_I$ and $^Iw\in \prescript{I}{}{W}$.

The \emph{support} of $w$ is \[\Supp(w)=\{\alpha_i\in\Delta\:|\: s_i\text{ appears in any/all reduced words of }w\}.\]
The cardinality of $\Supp(w)$ is written as $\supp(w)=|\Supp(w)|$. 
For $w\in W$, its \emph{left descent set} and the \emph{right descent set} are \[{\mathcal D}_L(w)=\{\alpha_i\in \Delta\:|\:\ell(s_{i}w)<\ell(w)\},\quad {\mathcal D}_R(w)=\{\alpha_i\in \Delta\:|\:\ell(ws_i)<\ell(w)\},\] respectively. And its \emph{left inversion set} and the \emph{right inversion set} are \[{\mathcal I}_L(w)=\{\alpha\in \Phi^+\:|\:w^{-1}(\alpha)\in\Phi^-\},\quad {\mathcal I}_R(w)=\{\alpha\in \Phi^+\:|\:w(\alpha)\in\Phi^-\},\] respectively, with $|\mathcal{I}_L(w)|=|\mathcal{I}_R(w)|=\ell(w)$. There are equivalent ways to write down the inversion sets. For example, \[\mathcal{I}_L(w)=\Phi^+\cap w(\Phi^-)=\{\alpha\in\Phi^+\:|\: \ell(s_{\alpha}w)<\ell(w)\}.\]
Given a length additive product $uv$, \cite[Ch. VI, \S 1, Cor. 2 of Prop. 17]{B02} proves that
\begin{equation}
\label{eq:lengthaddinvset}
{\mathcal I}_L(uv) = {\mathcal I}_L(u) \sqcup u({\mathcal I}_L(v)).
\end{equation}
The \emph{(strong) Bruhat order} is a partial order on $W$ that is generated by \[w<ws_{\alpha}\text{ for }\alpha\in\Phi^+,\text{ if }\ell(w)<\ell(ws_{\alpha}).\]
Write $[u,v]:=\{w\in W\:|\: u\leq w\leq v\}$ for a \emph{Bruhat interval}. It is well-known (see \cite[Lemma 2.7.3]{Bjorner.Brenti}) that any interval $[u,v]$ of rank $2$, i.e. $\ell(v)-\ell(u)=2$, has $4$ elements forming a ``diamond". 
Here are some properties of the Bruhat order. See \cite[Section 2.2]{Bjorner.Brenti}.
\begin{proposition}[Subword property]\label{prop:subword-property}
Fix a reduced word $w=s_{i_1}\cdots s_{i_{\ell}}$. Then $u\leq w$ in the Bruhat order if and only if there exists a subexpression of $s_{i_1}\cdots s_{i_{\ell}}$ that equals $u$.
\end{proposition}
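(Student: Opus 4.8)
The plan is to deduce this classical subword property from the \emph{lifting property} of the Bruhat order: if $x<y$ in $W$ and $s\in\mathcal D_R(y)$, then $xs\le ys$ when $s\in\mathcal D_R(x)$, while $x\le ys$ and $xs\le y$ when $s\notin\mathcal D_R(x)$. This is \cite[Proposition 2.2.7]{Bjorner.Brenti}, transported from left to right descents via $x\mapsto x^{-1}$, and it rests ultimately on the strong exchange condition. I would also use the elementary observation, immediate from the definition of the Bruhat order, that if $w=w's$ is a length-additive product with $s$ simple then $w'<w$.

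For the ``if'' direction, suppose a subexpression of the reduced word $s_{i_1}\cdots s_{i_\ell}$ equals $u$; I prove $u\le w$ by induction on $\ell$, the case $\ell=0$ being trivial. Put $s=s_{i_\ell}$ and $w'=s_{i_1}\cdots s_{i_{\ell-1}}$, so $w=w's$ is length additive and $w'<w$. If the subexpression omits the last letter, then $u$ is a subexpression of $s_{i_1}\cdots s_{i_{\ell-1}}$, so $u\le w'<w$ by induction. Otherwise it uses the last letter, so $us$ is a subexpression of $s_{i_1}\cdots s_{i_{\ell-1}}$ and $us\le w'$ by induction. If $\ell(us)>\ell(u)$ then $u<us\le w'<w$. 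If $\ell(us)<\ell(u)$, then $us<w$ with $s\in\mathcal D_R(w)\setminus\mathcal D_R(us)$, so the lifting property gives $(us)s=u\le w$.

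For the ``only if'' direction I prove the stronger claim that $u\le w$ implies $u$ is a subexpression of \emph{every} reduced word of $w$, again by induction on $\ell(w)$; the case $\ell(w)=0$ and the case $u=w$ are trivial, so assume $u<w$. Fix a reduced word $s_{i_1}\cdots s_{i_\ell}$ of $w$ and set $s=s_{i_\ell}$, $w'=s_{i_1}\cdots s_{i_{\ell-1}}$, so $s\in\mathcal D_R(w)$ and $w'=ws$. If $s\notin\mathcal D_R(u)$, the lifting property gives $u\le w'$, so by induction $u$ is a subexpression of $s_{i_1}\cdots s_{i_{\ell-1}}$, hence of $s_{i_1}\cdots s_{i_\ell}$. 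If $s\in\mathcal D_R(u)$, the lifting property gives $us\le w'$, so by induction $us$ is a subexpression of $s_{i_1}\cdots s_{i_{\ell-1}}$; appending $s_{i_\ell}$ yields a subexpression of $s_{i_1}\cdots s_{i_\ell}$ equal to $(us)s=u$.

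The content is entirely in the lifting property; given it, both directions are a one-letter induction that merely tracks whether the last letter of a reduced word of $w$ is a right descent of $u$. The main obstacle, for a fully self-contained treatment, is proving the lifting property — equivalently the strong exchange condition — from the definition of the Bruhat order together with the inversion-set identity~\eqref{eq:lengthaddinvset}, which I would treat as standard and cite.
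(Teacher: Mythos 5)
Your argument is correct. Note, however, that the paper does not prove this proposition at all: it is stated as a standard fact with a pointer to \cite[Section 2.2]{Bjorner.Brenti}, so there is no "paper proof" to compare against; what you have supplied is a genuine addition. Your route — deducing the subword property from the lifting property by a one-letter induction on a reduced word, splitting on whether $s_{i_\ell}$ is a right descent of $u$ — is sound in both directions, and the one place where you invoke a version of lifting not literally covered by Proposition~\ref{prop:lifting} (namely $s\in\mathcal{D}_R(u)\cap\mathcal{D}_R(w)$ implies $us\le ws$) does follow from the stated version applied to the pair $us<w$, since $s\notin\mathcal{D}_R(us)$. The one caveat worth making explicit is the direction of the logical dependencies: in \cite{Bjorner.Brenti} the lifting property (Proposition 2.2.7) is itself \emph{derived from} the subword property (Theorem 2.2.2), which is proved directly from the strong exchange condition. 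So citing that source for lifting and then deriving subword from it is circular at the level of foundations, unless one substitutes an independent proof of lifting from strong exchange (such proofs exist, e.g.\ in Humphreys). You acknowledge this at the end, which is the right instinct; for a self-contained treatment that is the step that would need to be filled in.
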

\begin{proposition}[Lifting property]\label{prop:lifting}
Suppose $u<w$ and $s\in \mathcal{D}_R(w)\setminus\mathcal{D}_R(u)$, then $u\leq ws$ and $us\leq w$. 
\end{proposition}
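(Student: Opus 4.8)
The plan is to derive both inequalities directly from the subword property (Proposition~\ref{prop:subword-property}), using a carefully chosen reduced word. First I would unpack the hypotheses: writing $s=s_i$, the condition $s\in\mathcal{D}_R(w)$ says $\ell(ws)=\ell(w)-1$, while $s\notin\mathcal{D}_R(u)$ says $\ell(us)=\ell(u)+1$. Since $\ell(ws)<\ell(w)$, I would fix a reduced word $w=s_{i_1}\cdots s_{i_k}s$ with $k+1=\ell(w)$, so that deleting the last letter yields a reduced word $s_{i_1}\cdots s_{i_k}$ for $ws$.

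Next, since $u\le w$, the subword property produces a subexpression of $s_{i_1}\cdots s_{i_k}s$ equal to $u$, and I would split into two cases according to whether such a subexpression can avoid the final letter $s$. If it can, say $u=s_{j_1}\cdots s_{j_m}$ with $j_1<\cdots<j_m\le k$, then this is a subexpression of the reduced word for $ws$, giving $u\le ws$; and appending the final $s$ exhibits $us=s_{j_1}\cdots s_{j_m}s$ as a subexpression of the reduced word $w=s_{i_1}\cdots s_{i_k}s$, giving $us\le w$. If instead every subexpression for $u$ must use the final $s$, I would write $u=s_{j_1}\cdots s_{j_m}s$ with $j_1<\cdots<j_m\le k$; then $us=s_{j_1}\cdots s_{j_m}$ is a subexpression of the reduced word for $ws$, so $us\le ws<w$ and hence $us\le w$. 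Moreover $u$ and $us$ differ by right multiplication by the simple reflection $s$ with $\ell(u)<\ell(us)$, so $u<us$ by the defining generating relations of the Bruhat order; combined with $us\le ws$ this gives $u\le ws$. In both cases one concludes $u\le ws$ and $us\le w$.

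I do not expect a genuine obstacle; the argument is entirely formal once one commits to a reduced word of $w$ ending in $s$. The only point requiring a moment's care is the second case: one must notice that the trailing $s$ forced into every subexpression of $w$ representing $u$ can simply be stripped off to produce a (possibly non-reduced) subexpression of $ws$ representing $us$, and then recover $u\le ws$ from the generating relations of the Bruhat order rather than from the subword property directly. An alternative route would be an induction on $\ell(w)-\ell(u)$ via the exchange condition, but the subword argument is cleaner and self-contained given what has already been established in this section.
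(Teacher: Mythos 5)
Your proof is correct: both cases are handled validly (note that your second case is in fact vacuous, since a \emph{reduced} subexpression for $u$ ending in the final $s$ would force $s\in\mathcal{D}_R(u)$, but your argument there is sound regardless, as it never relies on reducedness). The paper does not prove this proposition itself---it cites it as a standard fact from Bj\"orner--Brenti---and your derivation from the subword property, via a reduced word for $w$ ending in $s$, is essentially the standard textbook argument for the lifting property.
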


\subsection{Root subgroups}\label{sub:root-subgroups}
For $\alpha \in \Phi$, let $U_{\alpha}$ be the root subgroup corresponding to $\alpha$. Given $w \in W$, define $U_w := U \cap w U^- w^{-1}$, 
where $U$ and $U^-$ are the unipotent parts of $B$ and $B^- := w_0 B w_0$, respectively. A sequence of subgroups $H_1,\ldots,H_k$ in an algebraic group $H$ is said to \emph{directly span} $H$ if the product morphism $H_1 \times \cdots \times H_k \rightarrow H$ is a bijection. The subgroup $U_w$ of $U$ is closed and normalized by $T$. Hence \cite[\S 14.4]{B91} implies that $U_w$ is directly spanned (in any order) by the root subgroups contained in it. Thus
\[U_w := U \cap w U^- w^{-1} = \left( \prod_{\alpha \in \Phi^+} U_{\alpha} \right) \bigcap \left( w \prod_{\alpha \in \Phi^-} U_{\alpha} w^{-1} \right) = \left( \prod_{\alpha \in \Phi^+} U_{\alpha} \right) \bigcap \left( \prod_{\alpha \in \Phi^-} U_{w(\alpha)} \right) ,\]
where the second equality is \cite[\S 14.4]{B91} applied to $U$ and $U^-$, and the last equality is \cite[Part II, 1.4(5)]{J03}. Thus $U_{\alpha}$ is contained in $U_w$ if and only if $\alpha \in {\mathcal I}(w)$. Hence
\begin{equation}
\label{eq:formofUw}
U_w = \prod_{\alpha \in {\mathcal I}(w)} U_{\alpha}.
\end{equation}

Let $w = ad$ be the left parabolic decomposition of $w$ with respect to $I$, where $a\in W_I$ and $d\in\prescript{I}{}{W}$. Define $V_{d} := a U_d a^{-1}$.

\begin{lemma}\label{lemma:directlyspannedUw}
The group $V_{d}$ is a closed subgroup of $U_w$ normalized by $T$. Indeed, $U_w$ is directly spanned in any order by the subgroups $U_a$ and $V_{d}$, that is, $U_w = U_a V_{d} = V_d U_a$.
\end{lemma}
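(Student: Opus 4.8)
The plan is to exploit the length-additive factorization $w=ad$ together with the decomposition \eqref{eq:lengthaddinvset} of inversion sets. Since $w=ad$ is length-additive with $a\in W_I$ and $d\in\prescript{I}{}{W}$, equation \eqref{eq:lengthaddinvset} gives $\mathcal{I}_L(w)=\mathcal{I}_L(a)\sqcup a(\mathcal{I}_L(d))$, a disjoint union inside $\Phi^+$ (the fact that $a(\mathcal{I}_L(d))\subseteq\Phi^+$ is precisely length-additivity). By \eqref{eq:formofUw} this translates into a set-theoretic factorization of the positive roots indexing $U_w$, namely those in $\mathcal{I}_L(a)$ and those in $a(\mathcal{I}_L(d))$. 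I would first record that $\mathcal{I}_L(d)=\mathcal{I}_R(d^{-1})$ type bookkeeping is not needed here; everything should be phrased with the inversion set $\mathcal{I}(w)$ the paper already uses, so I would just be careful to match the paper's convention (here ${\mathcal I}(w)$ appears to mean the left inversion set in \eqref{eq:formofUw}, matching $U\cap wU^-w^{-1}$).

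Next I would identify the two factors group-theoretically. The subgroup $U_a=\prod_{\alpha\in\mathcal{I}_L(a)}U_\alpha$ is visibly a subgroup of $U_w$ since $\mathcal{I}_L(a)\subseteq\mathcal{I}_L(w)$. For $V_d=aU_da^{-1}$, conjugation by the torus-normalizing element $a$ sends $U_\beta$ to $U_{a(\beta)}$, so $V_d=\prod_{\beta\in\mathcal{I}_L(d)}U_{a(\beta)}=\prod_{\gamma\in a(\mathcal{I}_L(d))}U_\gamma$; since $a(\mathcal{I}_L(d))\subseteq\mathcal{I}_L(w)\subseteq\Phi^+$, every such $U_\gamma$ lies in $U$, hence $V_d\subseteq U_w$, it is closed (continuous image of a closed subgroup under an automorphism, or: it is a product of root subgroups indexed by a subset of $\mathcal{I}(w)$, which by \cite[\S 14.4]{B91} is closed), and it is normalized by $T$ because each $U_\gamma$ is. This disposes of the first sentence of the lemma.

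For the direct-spanning claim, the key observation is that $\mathcal{I}(w)$ is the disjoint union of $\mathcal{I}_L(a)$ and $a(\mathcal{I}_L(d))$, so the root subgroups comprising $U_a$ and those comprising $V_d$ together are exactly the root subgroups contained in $U_w$, each occurring once. Since $U_w$ is directly spanned in any order by the root subgroups it contains (the citation to \cite[\S 14.4]{B91} in Section~\ref{sub:root-subgroups}), grouping that product as ``first the factors in $\mathcal{I}_L(a)$, then the factors in $a(\mathcal{I}_L(d))$'' yields the bijectivity of $U_a\times V_d\to U_w$, i.e. $U_w=U_aV_d$; grouping in the other order yields $U_w=V_dU_a$. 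Concretely: the product morphism $\prod_{\alpha\in\mathcal{I}(w)}U_\alpha\to U_w$ is bijective for any chosen ordering of $\mathcal{I}(w)$, and choosing an ordering that lists $\mathcal{I}_L(a)$ first factors this morphism through $U_a\times V_d$, with both the map $U_a\times V_d\to U_w$ and its ``internal'' factorizations being the restrictions of bijective product maps.

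The only real subtlety—and the step I would be most careful about—is making sure the index-set decomposition really does come out as a partition of $\mathcal{I}(w)$ and that the conventions line up: one must check that $U_w$ in this lemma is the group $U\cap wU^-w^{-1}$ from Section~\ref{sub:root-subgroups} (so that \eqref{eq:formofUw} applies with $\mathcal{I}(w)$ the appropriate inversion set), and that the left parabolic decomposition $w=ad$ used here is length-additive so that \eqref{eq:lengthaddinvset} applies. Both are given. Beyond that, everything reduces to the ``directly spanned in any order by its root subgroups'' principle already invoked in the paper, applied to a partition of the index set, so no genuinely new geometric input is required; the proof is essentially a reorganization of \eqref{eq:lengthaddinvset}, \eqref{eq:formofUw}, and the automorphism property $aU_\beta a^{-1}=U_{a(\beta)}$.
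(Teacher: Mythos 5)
Your proposal is correct and follows essentially the same route as the paper: both rest on the partition $\mathcal{I}(w)=\mathcal{I}(a)\sqcup a(\mathcal{I}(d))$ from \eqref{eq:lengthaddinvset}, the identification $V_d=\prod_{\gamma\in a(\mathcal{I}(d))}U_\gamma$ via $aU_\beta a^{-1}=U_{a(\beta)}$, and the ``directly spanned in any order'' principle from \eqref{eq:formofUw}. The only cosmetic difference is in the normalization-by-$T$ step, where the paper conjugates $a$ past $t$ directly while you observe that each root subgroup $U_\gamma$ is $T$-normalized; both are valid.
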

\begin{proof} 
The group $V_d$ is closed since $U_d$ is closed. Now, let $t \in T$. Since $a \in W$, we have $t^{-1}a = a t^{\prime}$ for some $t^{\prime} \in T$. Hence $a^{-1}t=(t^{\prime})^{-1}a^{-1}$. These results, along with the fact that $U_d$ is normalized by $T$, yield 
\[
    t^{-1} V_d t = t^{-1}aU_d a^{-1} t = t^{-1}a U_d a^{-1} t = a t^{\prime} U_d (t^{\prime})^{-1}a^{-1} = a U_d a^{-1} = V_d,
\]
allowing us to conclude that $V_d$ is normalized by $T$. For the remaining claims, 
\begin{align*}
U_w = \prod_{\alpha \in {\mathcal I}(ad)} U_{\alpha} &= \left( \prod_{\alpha \in {\mathcal I}(a)} U_{\alpha} \right) \left(  \prod_{\alpha \in a({\mathcal I}(d))} U_{\alpha} \right) \\
& = U_a \left( \prod_{\alpha \in {\mathcal I}(d)} U_{a(\alpha)} \right) = U_a \left( a U_d a^{-1} \right) = U_a V_d \\
\end{align*}
where the first equality is \eqref{eq:formofUw}, the second is \eqref{eq:lengthaddinvset} combined with the length additivity of the left parabolic decomposition, and the fourth is \cite[Part II, 1.4(5)]{J03} and \eqref{eq:formofUw}. In the same way, one can show $U_w = V_d U_a$. We conclude that $U_w$ is directly spanned in any order by its subgroups $U_a$ and $V_d$.
\end{proof}

\section{The torus complexity of Richardson varieties}\label{sec:torus-complexity-Richardson}
In this section, we establish a formula for the complexity of the usual torus $T$ action on any Richardson variety $\mathcal{R}_{u,v}$ where $u\leq v$ in the Bruhat order.
\begin{definition}\label{def:Bruhat-graph}
The \emph{(undirected) Bruhat graph} on a Weyl group $W$ is the graph $\Gamma$ with vertex set $W$ and edges $w\sim s_{\alpha}w$ for $\alpha\in\Phi^+$. For each edge $w\sim s_{\alpha}w$, we say that it has \emph{label} or \emph{weight} $\alpha$, and write $\wt(w,s_{\alpha}w)=\alpha$. For $u\leq v$, let $\Gamma(u,v)$ be the Bruhat graph restricted to the vertex set $[u,v]$.
\end{definition}
\begin{definition}
For $u\leq v$, let $\AD(u,v)$ be the $\mathbb{R}$-span of all edge labels in $\Gamma(u,v)$, i.e. \[\AD(u,v)=\mathrm{span}_{\mathbb{R}}\{\wt(x,y)\:|\: u\leq x<y\leq v\}.\]
Let $\ad(u,v)=\dim\AD(u,v)$ be the \emph{algebraic dimension} of the Bruhat interval $[u,v]$.
\end{definition}

In the following subsection~\ref{sub:algebraic-dimension}, we fully develop the notion of algebraic dimension, and in subsection~\ref{sub:torus-complexity}, we finish the proof of Theorem~\ref{thm:torus-complexity} by Deodhar decompositions. 

\subsection{The algebraic dimension of Bruhat intervals}\label{sub:algebraic-dimension}
We first discuss some useful tools.
\begin{lemma}\cite[Lemma 3.1]{dyer-bruhat-graph}\label{lem:Dyer-rank2}
For $\alpha,\beta,\gamma,\tau\in\Phi^+$, if $s_\alpha s_\beta=s_\gamma s_\tau\neq1$, then $\mathrm{span}_{\mathbb{R}}(\alpha,\beta)=\mathrm{span}_{\mathbb{R}}(\gamma,\tau)$ is $2$-dimensional.
\end{lemma}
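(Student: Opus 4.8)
The plan is to recognize each of the two products as an orthogonal transformation and to identify it with its fixed subspace. I would work inside the reflection representation $V$ of $W$, equipped with the $W$-invariant symmetric bilinear form $B$, which for a (finite) Weyl group is positive definite; this gives an orthogonal decomposition $V=P\oplus P^\perp$ for every subspace $P$, and makes every plane in $V$ Euclidean. First I would observe that $s_\alpha s_\beta\neq 1$ forces $\alpha$ and $\beta$ to be non-proportional: if $\mathbb{R}\alpha=\mathbb{R}\beta$ then $s_\alpha=s_\beta$ (a reflection depends only on the line it negates), so the product is trivial. Hence $P:=\mathrm{span}_{\mathbb{R}}(\alpha,\beta)$ is $2$-dimensional, and likewise $Q:=\mathrm{span}_{\mathbb{R}}(\gamma,\tau)$ is $2$-dimensional since $s_\gamma s_\tau=s_\alpha s_\beta\neq 1$.

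Next I would pin down $\mathrm{Fix}(s_\alpha s_\beta)$ precisely. The plane $P$ is stable under both $s_\alpha$ and $s_\beta$ (each sends $\alpha$ and $\beta$ into $P$), and every vector of $P^\perp$ is fixed by both $s_\alpha$ and $s_\beta$ (being $B$-orthogonal to $\alpha$ and $\beta$), so $P^\perp\subseteq\mathrm{Fix}(s_\alpha s_\beta)$. On the Euclidean plane $P$ the restrictions $s_\alpha|_P$ and $s_\beta|_P$ are genuine reflections (each is $-1$ on a line and $+1$ on its orthogonal complement in $P$), so $(s_\alpha s_\beta)|_P$ lies in $\mathrm{SO}(P)$, i.e.\ it is a rotation; it is not the identity, for otherwise $s_\alpha s_\beta$ would act trivially on all of $V=P\oplus P^\perp$, contradicting $s_\alpha s_\beta\neq 1$. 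A nontrivial rotation of a Euclidean plane has no nonzero fixed vector, so $\mathrm{Fix}(s_\alpha s_\beta)\cap P=0$ and therefore $\mathrm{Fix}(s_\alpha s_\beta)=P^\perp$ exactly. The identical argument yields $\mathrm{Fix}(s_\gamma s_\tau)=Q^\perp$. From the hypothesis $s_\alpha s_\beta=s_\gamma s_\tau$ we then get $P^\perp=Q^\perp$, and applying $(-)^\perp$ once more gives $P=Q$, which we have already seen is $2$-dimensional.

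The only place that requires genuine care — and thus the main obstacle — is the assertion that $(s_\alpha s_\beta)|_P$ is a fixed-point-free rotation: this rests on $B$ restricting to a nondegenerate form on $P$, which is what makes $V=P\oplus P^\perp$ and forces $P$ to be a Euclidean plane on which a product of two distinct reflections is a nontrivial rotation. For a finite Weyl group positive-definiteness makes this automatic; if one wanted the statement for an arbitrary geometric realization of a Coxeter system (Dyer's setting), one would instead run the same rank-two analysis on the contragredient representation, where the relevant computations still go through. Alternatively one could phrase the whole argument through the theory of dihedral reflection subgroups, but the linear-algebra version above is self-contained.
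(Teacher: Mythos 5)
Your argument is correct, and it is worth noting that the paper itself offers no proof of this lemma at all: it is quoted verbatim from Dyer's paper on the Bruhat graph, whose own argument is carried out in the setting of an arbitrary Coxeter system (via the geometric representation and dihedral reflection subgroups). Your self-contained route — identify $\mathrm{Fix}(s_\alpha s_\beta)$ with $\mathrm{span}_{\mathbb{R}}(\alpha,\beta)^{\perp}$ by showing the product restricts to a fixed-point-free rotation of the Euclidean plane $\mathrm{span}_{\mathbb{R}}(\alpha,\beta)$, then take perpendiculars — is a clean and complete proof in the paper's actual setting of finite Weyl groups, where the invariant form is positive definite, the decomposition $V=P\oplus P^{\perp}$ is automatic, the reflection representation is faithful (needed when you pass from ``$(s_\alpha s_\beta)|_P=\mathrm{id}$'' to ``$s_\alpha s_\beta=1$''), and $(P^{\perp})^{\perp}=P$. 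You correctly flag the one place this would not transfer verbatim: the paper's closing remark in Section~3.1 claims the subsection generalizes to arbitrary Coxeter groups, and there the form on the standard geometric representation need not be definite, so one would need Dyer's original argument or your suggested contragredient/rank-two analysis. What your approach buys is transparency and independence from the reference; what the citation buys the authors is the full Coxeter-group generality they allude to.
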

We will be using this lemma multiple times, primarily on Bruhat intervals of rank $2$.

\begin{lemma}\label{lem:strong-weak-diamond}
For $\alpha_i\in\Delta$ and $\alpha\in\Phi^+$, if $w$ covers both $ws_i\neq s_{\alpha}w$, then both $ws_i$ and $s_{\alpha}w$ cover $s_{\alpha}ws_i$. Dually, if $w$ is covered by both $ws_i\neq s_{\alpha}w$, then $s_{\alpha}ws_i$ covers $ws_i$ and $s_{\alpha}w$.
\end{lemma}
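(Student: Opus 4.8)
The plan is to reduce everything to the rank-$2$ diamond structure of Bruhat order (recalled just before Proposition~\ref{prop:subword-property}) together with the exchange-type combinatorics encoded in Lemma~\ref{lem:Dyer-rank2}. I will prove the first statement; the dual follows by applying it to $w_0 w$ or simply by reversing all inequalities, since the hypotheses and conclusions are self-dual under $w \mapsto$ its position in Bruhat order with $s_i$ and $s_\alpha$ fixed.

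First I would set up notation: assume $w \gtrdot w s_i$ and $w \gtrdot s_\alpha w$ with $w s_i \neq s_\alpha w$, so that $\alpha_i \in \mathcal{D}_R(w)$ and $\alpha \in \mathcal{I}_L(w)$. The key point is to locate the element $s_\alpha w s_i$ and show it sits one step below both $w s_i$ and $s_\alpha w$. I would argue as follows. Since $\alpha \in \mathcal{I}_L(w)$, i.e.\ $\ell(s_\alpha w) < \ell(w)$, I claim $\ell(s_\alpha w s_i) = \ell(s_\alpha w) - 1$, i.e.\ $\alpha_i \in \mathcal{D}_R(s_\alpha w)$; indeed, if instead $\alpha_i \notin \mathcal{D}_R(s_\alpha w)$ then the lifting property (Proposition~\ref{prop:lifting}), applied to $s_\alpha w < w$ and $s_i \in \mathcal{D}_R(w) \setminus \mathcal{D}_R(s_\alpha w)$, would give $s_\alpha w \le w s_i$ — but both have length $\ell(w) - 1$, forcing $s_\alpha w = w s_i$, contradicting the hypothesis. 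So $s_\alpha w s_i$ has length $\ell(w) - 2$ and is covered by $s_\alpha w$. Symmetrically, using $\alpha_i \in \mathcal{D}_R(w)$ and the lifting property with the roles of the two covers exchanged (or by the analogous inversion-set argument with $\mathcal{I}_R$ and the reflection $s_{\alpha}$ acting on the left), I would show $s_\alpha w s_i$ is also covered by $w s_i$: the interval $[s_\alpha w s_i, w]$ has rank $2$, and $w s_i$, $s_\alpha w$ are two distinct elements strictly between them, so by the ``diamond'' property these are exactly the two middle elements of a rank-$2$ interval, and each covers $s_\alpha w s_i = s_\alpha (w s_i) = (s_\alpha w) s_i$.

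Let me be a little more careful about the one subtle point, which I expect to be the main obstacle: verifying that $s_\alpha w s_i$ is genuinely $\le w$ and has length exactly $\ell(w) - 2$, as opposed to the ``expressions'' $s_\alpha(ws_i)$ and $(s_\alpha w)s_i$ accidentally not being reduced in the way I want, or $s_\alpha w s_i$ coinciding with $w$. Once I know $\alpha_i \in \mathcal{D}_R(s_\alpha w)$ from the first paragraph's argument, $\ell(s_\alpha w s_i) = \ell(w) - 2$ is immediate, and $s_\alpha w s_i = (s_\alpha w) s_i \lessdot s_\alpha w \lessdot w$ gives $s_\alpha w s_i < w$; it cannot equal $w s_i$ since that would give $s_\alpha w = w$. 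The only thing left is that $w s_i$ also covers $s_\alpha w s_i$. For this I would invoke the diamond property directly: $s_\alpha w s_i < w s_i < w$ and $s_\alpha w s_i < s_\alpha w < w$ exhibit $[s_\alpha w s_i, w]$ as an interval of rank $2$ containing (at least) the two distinct intermediate elements $ws_i \neq s_\alpha w$; a rank-$2$ Bruhat interval has exactly two intermediate elements, each of which covers the bottom, so $w s_i \gtrdot s_\alpha w s_i$ as well. This completes the proof of the first assertion, and the dual assertion is obtained by the order-reversing symmetry noted above (equivalently, replacing $w$ by $w_0 w w_0$ and $\alpha$, $\alpha_i$ by the corresponding roots, or simply running the same lifting-property arguments with all length inequalities reversed). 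I do not foresee needing Lemma~\ref{lem:Dyer-rank2} here, though it would give an alternative route to identifying $s_\alpha w s_i$ as the bottom of the diamond if the lifting-property bookkeeping proves awkward.
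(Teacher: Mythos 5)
Your proof is correct and follows essentially the same route as the paper's: the heart of both arguments is the lifting property applied to $s_\alpha w < w$ (with $\alpha_i\in\mathcal{D}_R(w)\setminus\mathcal{D}_R(s_\alpha w)$) to show that $s_\alpha w$ must have a right descent at $\alpha_i$, the contradiction coming from two distinct elements of the same length being forced to coincide, after which both covers of $s_\alpha w s_i$ are immediate from the length count. The only divergence is in the dual statement, where the paper applies the first statement to $s_\alpha w$ to reach a contradiction while you invoke the anti-automorphism $w\mapsto w_0w$ (which works; just note that your parenthetical alternative $w\mapsto w_0ww_0$ is an automorphism, not an anti-automorphism, of Bruhat order and would not reverse covers).
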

\begin{proof}
For the first statement, it suffices to show that $s_{\alpha}w$ has a right descent at $\alpha_i$. If not, by the lifting property (Proposition~\ref{prop:lifting}) on $w>s_{\alpha}w$, we must have $ws_i\geq s_{\alpha}w$, which is a contradiction since they are different elements of the same length. 

For the dual statement, it suffices to show that $s_{\alpha}w$ does not have a right descent at $\alpha_i$. Assume the opposite. By the first statement on $s_{\alpha}w$, $ws_i$ must be covered by both $w$ and $s_{\alpha}ws_i$, contradicting $w<ws_i$.
\end{proof}

We now describe multiple ways to simplify the computation of $\AD(u,v)$.
\begin{lemma}\label{lem:AD-spanned-by-covers}
$\AD(u,v)$ is spanned by the labels of all cover relations incident to $u$ inside $[u,v]$, and dually, by all cover relations incident to $v$ inside $[u,v].$
\end{lemma}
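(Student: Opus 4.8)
The plan is to prove the first assertion; the dual statement then follows by replacing $[u,v]$ with $[v^{-1}, u^{-1}]$ or by applying the symmetric argument (covers incident to $v$ playing the role of covers incident to $u$ under the order-reversing, length-complementing involution on the interval). So let $S$ be the $\mathbb{R}$-span of the labels of cover relations incident to $u$ inside $[u,v]$; clearly $S \subseteq \AD(u,v)$, and we must show every edge label $\wt(x,y)$ with $u \leq x < y \leq v$ lies in $S$. I would induct on $\ell(x) - \ell(u)$, the base case $x = u$ being the definition of $S$ (an edge $u < y$ incident to $u$ contributes its label directly to $S$; note every edge incident to $u$ in $\Gamma(u,v)$ points upward since $u$ is the minimum).

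For the inductive step, suppose $u \le x < y \le v$ with $\ell(x) > \ell(u)$, so $\wt(x,y) = \gamma$ where $y = s_\gamma x$. Since $x > u$, pick a simple reflection $s = s_i$ with $\ell(sx) < \ell(x)$ and $sx \ge u$ — this exists by the lifting property (Proposition~\ref{prop:lifting}): take any $s_i \in \mathcal{D}_L(x) \setminus \mathcal{D}_L(u)$, which is nonempty as $x>u$, and then $s_i x \ge u$. Now I want to run the rank-$2$ "diamond" analysis inside $[u,v]$ on the elements $x$, $sx$, $y = s_\gamma x$. There are two cases according to whether $s_\gamma x = sx$ or not; if they are equal then $\gamma$ is (up to sign) $x(\alpha_i)$, and I can instead produce the label $\gamma$ from an edge lower down. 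The substantive case is $sx \ne s_\gamma x$: then $x$ covers $sx$ and $x$ is connected to $s_\gamma x$ by an edge. Apply Lemma~\ref{lem:strong-weak-diamond} (in the "$w$ covers $sx$ and $w$ is covered by... " — actually the configuration I need is $x$ covering $sx$, and $x$ related to $s_\gamma x$): more carefully, I use Lemma~\ref{lem:strong-weak-diamond} with $w=x$ to get a fourth element $z = s \cdot(s_\gamma x)$ or $z = s_\gamma(sx)$ completing a rank-$2$ interval, all four of whose elements lie in $[u,v]$ (they sit below $v$ since they're $\le x \le v$ up to the single cover, and above $u$ by the lifting property applied appropriately — this is the delicate bookkeeping point). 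By Lemma~\ref{lem:Dyer-rank2} applied to this rank-$2$ interval, the $\mathbb{R}$-span of the two "upper" edge labels equals the span of the two "lower" edge labels; the lower edges are incident to elements of length $\ell(x)-1$ or involve the simple root $\alpha_i$, so by induction their labels lie in $S$, whence $\gamma \in S$.

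The main obstacle I anticipate is the case analysis in the inductive step — specifically, verifying that the fourth vertex of the relevant rank-$2$ diamond, and all intermediate vertices produced, actually lie within the interval $[u,v]$ rather than merely within $W$. The upper bound $\le v$ is easy (everything stays $\le x \le v$ or one cover step away in a controlled direction), but the lower bound $\ge u$ requires careful, repeated use of the lifting property (Proposition~\ref{prop:lifting}) and possibly the subword property (Proposition~\ref{prop:subword-property}), and one has to make sure the reflection $s_\gamma$ interacts correctly with the choice of $s_i \in \mathcal{D}_L(x)\setminus \mathcal{D}_L(u)$. A clean way to organize this may be to first reduce to showing: for every edge $x < y$ in $[u,v]$ with $x \ne u$, the label $\wt(x,y)$ lies in the span of labels of edges in $[u,v]$ incident to elements of strictly smaller "distance from $u$", where distance is measured by $\ell(x) - \ell(u)$; then the whole statement is a straightforward downward propagation. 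I would also double-check whether the edge $x<y$ might have $y = s_\gamma x$ with $\ell(s_\gamma x) < \ell(x)$, i.e. the edge points downward from $x$ — in that case relabel so that $x$ is the lower endpoint, which is harmless since we range over all ordered pairs $u \le x < y \le v$ anyway.
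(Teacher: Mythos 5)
There is a genuine gap, and it sits in your base case. You define $S$ as the span of the labels of \emph{cover relations} incident to $u$, but the Bruhat graph $\Gamma(u,v)$ (Definition~\ref{def:Bruhat-graph}) has an edge $x\sim s_\alpha x$ for \emph{every} reflection, including ``long'' edges with $|\ell(s_\alpha x)-\ell(x)|\geq 2$. An edge $u\sim y$ with $\ell(y)-\ell(u)\geq 2$ is incident to $u$ but is not a cover relation, so its label is \emph{not} in $S$ ``by definition,'' contrary to your parenthetical claim. The same problem recurs in your inductive step: your diamond argument requires $[z,y]$ (for $z\lessdot x$) to be a rank-$2$ interval, which only happens when $x\lessdot y$ is a cover. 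So your induction, as written, only proves that labels of \emph{covers} lie in $S$, and never accounts for the long edges that are part of the definition of $\AD(u,v)$. The paper closes exactly this gap with a separate first step: it invokes Dyer's result (Proposition 3.3 of \cite{dyer-bruhat-graph}) that any edge $x_1\sim x_6$ with $\ell(x_6)-\ell(x_1)\geq 2$ fits into a configuration of strictly shorter edges inside $[x_1,x_6]$, and then applies Lemma~\ref{lem:Dyer-rank2} to conclude that every long edge label is spanned by shorter edge labels, hence ultimately by cover labels. This reduction is a genuine external input, not something that falls out of the diamond property, and your proposal is missing it.

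On the part of your argument that does address covers: it is essentially the paper's second step, but more complicated than necessary. You do not need to select a simple left descent $s_i\in\mathcal{D}_L(x)\setminus\mathcal{D}_L(u)$ and then invoke Lemma~\ref{lem:strong-weak-diamond}; since $x>u$, any saturated chain from $u$ to $x$ supplies some $z$ with $u\leq z\lessdot x$, and then the rank-$2$ interval $[z,y]$ automatically lies inside $[u,v]$ and is a diamond $z\lessdot x,x'\lessdot y$. Lemma~\ref{lem:Dyer-rank2} then gives $\wt(x,y)\in\mathrm{span}_{\mathbb{R}}\{\wt(z,x),\wt(z,x')\}$, and both of those are labels of covers with lower endpoint of length $\ell(x)-1$, so induction on the length of the lower endpoint finishes it --- no case split on whether $s_\gamma x=s_ix$, and no ``delicate bookkeeping'' about membership in $[u,v]$. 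Finally, your suggested reduction of the dual statement via $[v^{-1},u^{-1}]$ does not work ($w\mapsto w^{-1}$ preserves, rather than reverses, the Bruhat order); the correct route is simply the symmetric argument run downward from $v$, as you also mention.
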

\begin{proof}
We first show that $\AD(u,v)$ is spanned by all labels of cover relations in $[u,v]$. We say that an edge $x\sim y$ in the Bruhat graph has length $|\ell(x)-\ell(y)|$ and it suffices to show that a label of length greater than $1$ can be linearly generated by edge labels with strictly smaller length. By Proposition 3.3 of \cite{dyer-bruhat-graph}, for any edge $x_1\sim x_6$ such that $\ell(x_1)<\ell(x_6)-1$ in the Bruhat graph, there are $x_2,x_3,x_4,x_5\in[x_1,x_6]$ such that $x_1$ is connected to $x_i$, $x_i$ is connected to $x_j$ and $x_j$ is connected to $x_6$ for all $i\in\{2,3\}$ and $j\in\{4,5\}$. Applying Lemma~\ref{lem:Dyer-rank2} to the reflections corresponding to the edge labels of $x_1\sim x_2\sim x_4\sim x_6$ and $x_1\sim x_6$, we have $\wt(x_1,x_6)\in\mathrm{span}_{\mathbb{R}}\{\wt(x_1,x_2),\wt(x_2,x_4)\}$.

Next, let $V$ be the vector space spanned by the labels of all cover relations incident to $u$, and we need to show that $\wt(x,y)\in V$ for all $x\lessdot y$ inside $[u,v]$. We use induction on $\ell(x)$, where the base case $\ell(x)=\ell(u)$ is given. If $\ell(x)>\ell(u)$, find $u\leq z\lessdot x$ and let $x'\neq x$ be the unique element such that $z\lessdot x'\lessdot y$.  By Lemma~\ref{lem:Dyer-rank2} on $[z,y]$, $\wt(x,y)\in\mathrm{span}_{\mathbb{R}}\{\wt(z,x),\wt(z,x')\}\subset V$. So the induction step is complete. 
\end{proof}

\begin{proposition}\label{prop:AD-deodhar-recursion}
Let $\alpha_i$ be a right descent of $v$. We have the following recursion: \[\AD(u,v)=\begin{cases}
    \AD(us_i,vs_i) &\text{ if } us_i<u,\\
    \AD(u,vs_i)+\mathbb{R}\cdot\wt(u,us_i) &\text{ if }us_i>u.
\end{cases}\]
\end{proposition}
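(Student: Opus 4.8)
The plan is to handle the two cases separately (recall $us_i\neq u$ always, so the cases are exhaustive), in each case first using Lemma~\ref{lem:AD-spanned-by-covers} to replace $\AD$ by the span of the labels of the covers incident to a \emph{chosen} endpoint of the interval, and then transporting those covers along right multiplication by $s:=s_i$; the hypothesis is then $vs<v$. The tools are the lifting property (Proposition~\ref{prop:lifting}), the subword property (Proposition~\ref{prop:subword-property}), the standard fact that $w$ and $tw$ are Bruhat comparable with the shorter one below (so $tw=s_\gamma w$ and $\ell(tw)=\ell(w)+1$ give $w\lessdot tw$ with $\wt(w,tw)=\gamma$), and the monotonicity $\AD(u',v')\subseteq\AD(u,v)$ for $[u',v']\subseteq[u,v]$. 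I will also use that $x\le v$ together with $s\in\mathcal{D}_R(x)\cap\mathcal{D}_R(v)$ imply $xs\le vs$; this follows at once from the subword property by writing a reduced word of $v$ ending in $s$ (a reduced subexpression for $x$ either avoids the last letter, so $x\le vs$ and then $xs<x\le vs$, or ends in it, so deleting it gives a subexpression of a reduced word for $vs$ equal to $xs$).

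I would start with the easier case $us_i>u$, writing $s=s_i$. Lifting applied to $u<v$ with $s\in\mathcal{D}_R(v)\setminus\mathcal{D}_R(u)$ gives $us\le v$, so $u\lessdot us$ is a cover inside $[u,v]$; together with $[u,vs]\subseteq[u,v]$ this gives the inclusion $\supseteq$. For $\subseteq$, by Lemma~\ref{lem:AD-spanned-by-covers} it suffices to place each label $\wt(u,x)$ with $u\lessdot x\le v$ into $\AD(u,vs)+\mathbb{R}\cdot\wt(u,us)$. If $x\le vs$ this is immediate; otherwise $s\in\mathcal{D}_R(x)$ (else lifting gives $x\le vs$), and lifting applied to $u<x$ with $s\in\mathcal{D}_R(x)\setminus\mathcal{D}_R(u)$ gives $us\le x$, whence $x=us$ since $\ell(us)=\ell(u)+1=\ell(x)$.

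For the case $us_i<u$ I would prove $\AD(u,v)=\AD(us,vs)$ by establishing the two inclusions with \emph{opposite} anchors in Lemma~\ref{lem:AD-spanned-by-covers}. For $\subseteq$, anchor at the bottom $u$: for a cover $u\lessdot x\le v$ with label $\gamma$ (so $x=s_\gamma u$), if $s\in\mathcal{D}_R(x)$ then $xs=s_\gamma(us)$ and $\ell(xs)=\ell(us)+1$, so $us\lessdot xs$ with label $\gamma$ and $xs\le vs$, giving $\gamma\in\AD(us,vs)$; if $s\notin\mathcal{D}_R(x)$ then lifting gives $x\le vs$, so the cover lies in $[u,vs]\subseteq[us,vs]$. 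For $\supseteq$, anchor at the top $vs$: for a cover $us\le z\lessdot vs$ with label $\delta$ (so $vs=s_\delta z$), note $s\notin\mathcal{D}_R(us)$; if $s\in\mathcal{D}_R(z)$ then lifting applied to $us<z$ gives $u=(us)s\le z$, so the cover lies in $[u,vs]\subseteq[u,v]$; if $s\notin\mathcal{D}_R(z)$ then $v=(vs)s=s_\delta(zs)$ with $\ell(zs)=\ell(z)+1=\ell(v)-1$, so $zs\lessdot v$ with label $\delta$, and lifting applied to $us<zs$ gives $u\le zs$, so the cover lies in $[u,v]$; either way $\delta\in\AD(u,v)$.

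I expect the main obstacle to be precisely the inclusion $\AD(us,vs)\subseteq\AD(u,v)$ in the case $us_i<u$: anchoring Lemma~\ref{lem:AD-spanned-by-covers} at the bottom $us$ of $[us,vs]$ would force one to show that $\wt(us,u)=-u(\alpha_i)$ lies in $\AD(u,v)$, and this positive root is in general not the label of any edge of $\Gamma(u,v)$ incident to $u$, so it would have to be written as a linear combination of other labels — awkward to do directly. Anchoring at the top $vs$ instead removes the difficulty, since the cover $us\lessdot u$ then simply does not appear among the covers being considered. The residual checks — that the auxiliary elements $xs,zs$ really lie in the stated intervals and that each lifting hypothesis $s\in\mathcal{D}_R(\cdot)\setminus\mathcal{D}_R(\cdot)$ holds — are routine, following from $vs<v$, from $us<u$ (which gives $s\notin\mathcal{D}_R(us)$), and from length comparisons.
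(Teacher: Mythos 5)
Your proof is correct, and for the harder case $us_i<u$ it takes a genuinely different route from the paper. The paper establishes $\AD(u,v)\subseteq\AD(us_i,vs_i)$ by anchoring Lemma~\ref{lem:AD-spanned-by-covers} at the \emph{top} $v$, which forces it to deal with the cover $v\gtrdot vs_i$; handling that one label requires an induction on $\ell(v)-\ell(u)$ together with Dyer's rank-$2$ lemma (Lemma~\ref{lem:Dyer-rank2}) applied to a diamond below $v$. You sidestep this entirely by anchoring asymmetrically — at the bottom $u$ for $\subseteq$ and at the top $vs_i$ for $\supseteq$ — so that the problematic downward covers $u\gtrdot us_i$ and $v\gtrdot vs_i$ simply never occur among the covers being transported; each remaining cover is moved across right multiplication by $s_i$ (or left in place) using only the lifting property and the standard fact that $x\leq v$ with $s_i\in\mathcal{D}_R(x)\cap\mathcal{D}_R(v)$ gives $xs_i\leq vs_i$. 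The result is an induction-free, Dyer-free proof of Case 1; your Case 2 coincides with the paper's up to cosmetic differences (you invoke lifting directly where the paper routes through Lemma~\ref{lem:strong-weak-diamond}). What the paper's approach buys is that the same top-anchored induction machinery is reused almost verbatim in the proof of Proposition~\ref{prop:AD-usual-recurrence}; what yours buys is a shorter, self-contained argument for this proposition. All the residual verifications you flag as routine (that $xs_i$, $zs_i$ lie in the claimed intervals and that each lifting hypothesis holds) do check out, including the corner cases $x=v$ and $z=us_i$, which your case split handles correctly.
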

\begin{proof}
We consider these two cases separately.

\textbf{Case 1:} $vs_i<v$ and $us_i<u$. We use induction on $\ell(v)-\ell(u)$, with the base case $v=u$ being clear. First, we show $\AD(u,v)\subseteq\AD(us_i,vs_i)$. By Lemma~\ref{lem:AD-spanned-by-covers}, it suffices to show that for all $v\gtrdot s_{\alpha}v\in[u,v]$, $\alpha\in\AD(us_i,vs_i)$. If $s_{\alpha}v\neq vs_i$, by Lemma~\ref{lem:strong-weak-diamond}, $s_{\alpha}v$ has a right descent at $s_i$. Now $v\geq s_{\alpha}v\geq u$ and all three have right descents at $s_i$, so $vs_i\geq s_{\alpha}vs_i\geq us_i$. Thus, $s_{\alpha}vs_i$ and $vs_i$ both lie in $[us_i,vs_i]$, meaning that $\alpha=\wt(s_{\alpha}vs_i,vs_i)\in\AD(us_i,vs_i)$. 

Now assume $vs_i=s_{\alpha}v$. Then $vs_i\neq u$ because of right descents. So $vs_i>u$. Choose any $z$ such that $vs_i\gtrdot z\geq u$. Suppose that the rank $2$ interval $[z,v]$ contains $v\gtrdot vs_i,w\gtrdot z$. Let $\beta=\wt(z,vs_i)\in\AD(u,vs_i)\subseteq\AD(us_i,vs_i)$ and $\gamma=\wt(z,w)$. By Lemma~\ref{lem:strong-weak-diamond}, $w$ has a right descent at $s_i$. So $v>w>u$ and $vs_i>ws_i>us_i$. By induction hypothesis on $[u,w]$, $\gamma\in\AD(u,w)=\AD(us_i,ws_i)\subseteq\AD(us_i,vs_i)$. By Lemma~\ref{lem:Dyer-rank2}, $\alpha\in\mathrm{span}_{\mathbb{R}}\{\beta,\gamma\}\subseteq\AD(us_i,vs_i)$ as desired.

Example~\ref{ex:deodhar-AD-induction-example} shows a specific scenario of this situation.

Dually, the $\supseteq$ direction is proved via the exact same argument by arguing from the cover relations incident to $us_i$ inside $[us_i,vs_i]$.

\textbf{Case 2:} $vs_i<v$ and $us_i>u$. For the $\subseteq$ direction, take any $\alpha$ such that $s_{\alpha}u\gtrdot u$ and $s_{\alpha}u\leq v$. If $s_{\alpha}u=us_i$, then $\alpha=\wt(u,us_i)$ belongs to the right hand side. If $s_{\alpha}u\nleq us_i$, by Lemma~\ref{lem:strong-weak-diamond}, $s_{\alpha}u$ does not have a right descent at $s_i$. The lifting property (Proposition~\ref{prop:lifting}) on $s_{\alpha}u<v$ gives $s_{\alpha}u\leq vs_i$ so $\alpha\in\AD(u,vs_i)$ as desired. For the $\supseteq$ direction, we just need $\wt(u,us_i)\in\AD(u,v)$ which is true as $us_i\leq v$ by the lifting property.
\end{proof}

\begin{ex}\label{ex:deodhar-AD-induction-example}
Let $u=3412$, $v=1324$ and $i=2$. According to the first case of Proposition~\ref{prop:AD-deodhar-recursion}, $\AD(1324,3412)=\AD(1234,3142)$. Both intervals are shown in Figure~\ref{fig:3412-lower-interval}. In particular, the rank sizes of $[1324,3412]$ are $1,4,4,1$ while the rank sizes of $[1234,3142]$ are $1,3,3,1$. The covers from the maximum in these two intervals are highlighted. The weights from $3412$ are $e_1-e_3$, $e_2-e_3$, $e_1-e_4$ and $e_2-e_4$ while the weights from $3142$ are $e_1-e_3$, $e_2-e_3$ and $e_2-e_4$ (from left to right). The same linear space is spanned by these two sets of weights.

\begin{figure}[h!]
\centering
\begin{tikzpicture}[scale=0.6]
\def\a{0.9};
\def\b{0.4};
\def\h{3.0};
\def\r{0.3};
\newcommand\Rec[3]{
\node at (#1,#2) {#3};
\draw(#1-\a,#2-\b+\r)--(#1-\a,#2+\b-\r);
\draw(#1-\a+\r,#2-\b)--(#1+\a-\r,#2-\b);
\draw(#1+\a,#2-\b+\r)--(#1+\a,#2+\b-\r);
\draw(#1-\a+\r,#2+\b)--(#1+\a-\r,#2+\b);
\draw (#1-\a,#2-\b+\r) arc (180:270:\r);
\draw (#1+\a-\r,#2-\b) arc (270:360:\r);
\draw(#1+\a,#2+\b-\r) arc (0:90:\r);
\draw(#1-\a+\r,#2+\b) arc (90:180:\r);
}
\Rec{0}{0}{$1234$}
\Rec{0}{\h}{$1324$}
\Rec{-4*\a}{\h}{$1243$}
\Rec{4*\a}{\h}{$2134$}
\Rec{-8*\a}{2*\h}{$1342$}
\Rec{-4*\a}{2*\h}{$1423$}
\Rec{0*\a}{2*\h}{$2143$}
\Rec{4*\a}{2*\h}{$2314$}
\Rec{8*\a}{2*\h}{$3124$}
\Rec{-6*\a}{3*\h}{$1432$}
\Rec{-2*\a}{3*\h}{$2413$}
\Rec{2*\a}{3*\h}{$3142$}
\Rec{6*\a}{3*\h}{$3214$}
\Rec{0*\a}{4*\h}{$3412$}

\draw(0,\b)--(-4*\a,\h-\b);
\draw(0,\b)--(0*\a,\h-\b);
\draw(0,\b)--(4*\a,\h-\b);
\draw(-4*\a,\h+\b)--(-8*\a,2*\h-\b);
\draw(-4*\a,\h+\b)--(-4*\a,2*\h-\b);
\draw(-4*\a,\h+\b)--(0*\a,2*\h-\b);
\draw(0*\a,\h+\b)--(-8*\a,2*\h-\b);
\draw(0*\a,\h+\b)--(-4*\a,2*\h-\b);
\draw(0*\a,\h+\b)--(4*\a,2*\h-\b);
\draw(0*\a,\h+\b)--(8*\a,2*\h-\b);
\draw(4*\a,\h+\b)--(0*\a,2*\h-\b);
\draw(4*\a,\h+\b)--(4*\a,2*\h-\b);
\draw(4*\a,\h+\b)--(8*\a,2*\h-\b);
\draw(-8*\a,2*\h+\b)--(-6*\a,3*\h-\b);
\draw[very thick](-8*\a,2*\h+\b)--(2*\a,3*\h-\b);
\draw(-4*\a,2*\h+\b)--(-6*\a,3*\h-\b);
\draw(-4*\a,2*\h+\b)--(-2*\a,3*\h-\b);
\draw(0*\a,2*\h+\b)--(-2*\a,3*\h-\b);
\draw[very thick](0*\a,2*\h+\b)--(2*\a,3*\h-\b);
\draw(4*\a,2*\h+\b)--(-2*\a,3*\h-\b);
\draw(4*\a,2*\h+\b)--(6*\a,3*\h-\b);
\draw[very thick](8*\a,2*\h+\b)--(2*\a,3*\h-\b);
\draw(8*\a,2*\h+\b)--(6*\a,3*\h-\b);
\draw[very thick](-6*\a,3*\h+\b)--(0*\a,4*\h-\b);
\draw[very thick](-2*\a,3*\h+\b)--(0*\a,4*\h-\b);
\draw[very thick](2*\a,3*\h+\b)--(0*\a,4*\h-\b);
\draw[very thick](6*\a,3*\h+\b)--(0*\a,4*\h-\b);
\end{tikzpicture}
\caption{The lower interval in the Bruhat order below $3412$}
\label{fig:3412-lower-interval}
\end{figure}
\end{ex}

\begin{proposition}\label{prop:AD-usual-recurrence}
$\AD(u,v)$ has the following properties.
\begin{enumerate}
\item For any $u\leq w\lessdot v$, $\AD(u,v)=\AD(u,w)+\mathbb{R}\cdot\wt(w,v)$.
\item For any saturated chain $u=w^{(0)}\lessdot w^{(1)}\lessdot\cdots\lessdot w^{(\ell-1)}\lessdot w^{(\ell)}=v$, \[\AD(u,v)=\mathrm{span}_{\mathbb{R}}\{\wt(w^{(i)},w^{(i+1)})\:|\: i=0,\ldots,\ell-1\}.\]
\item For any $w\in[u,v]$, $\AD(u,v)$ is spanned by the labels of all cover relations incident to $w$ inside $[u,v]$.
\end{enumerate}
\end{proposition}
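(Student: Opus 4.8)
The plan is to deduce (2) and (3) from (1), and to prove (1) by induction via Proposition~\ref{prop:AD-deodhar-recursion}.

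\emph{From (1) to (2) and (3).} Given a saturated chain $u=w^{(0)}\lessdot\cdots\lessdot w^{(\ell)}=v$, I would apply (1) to the top cover $w^{(\ell-1)}\lessdot v$, then recurse on $[u,w^{(\ell-1)}]$, and so on, using $\AD(u,u)=\{0\}$ at the bottom; telescoping gives (2). For (3), fix $w\in[u,v]$ and concatenate a saturated chain $u\to w$ with one $w\to v$ to get a saturated chain through $w$; applying (2) to this chain and to its two halves gives $\AD(u,v)=\AD(u,w)+\AD(w,v)$. By the two forms of Lemma~\ref{lem:AD-spanned-by-covers}, $\AD(u,w)$ and $\AD(w,v)$ are, respectively, the spans of the cover labels at $w$ inside $[u,w]$ and inside $[w,v]$, so both lie in the span $E_w$ of all cover labels at $w$ inside $[u,v]$; conversely every cover at $w$ in $[u,v]$ lies in $[u,w]$ or in $[w,v]$, so $E_w\subseteq\AD(u,w)+\AD(w,v)=\AD(u,v)$, whence $E_w=\AD(u,v)$.

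\emph{Proof of (1).} The inclusion $\supseteq$ is immediate. For $\subseteq$ I would induct on $\ell(u)+\ell(v)$, proving (1) together with its mirror $\AD(u,v)=\AD(w',v)+\mathbb{R}\wt(u,w')$ for $u\lessdot w'\le v$; the two statements are exchanged by the order-reversing, Bruhat-graph-label-preserving bijection $x\mapsto xw_0$ (so $\AD(u,v)=\AD(vw_0,uw_0)$), and the mirror is proved identically using the corresponding ``peel a letter off the bottom'' version of Proposition~\ref{prop:AD-deodhar-recursion}. The base case $u=w$, where $[u,v]$ is a single edge, is clear, so suppose $u<w$ and pick $\alpha_i\in\mathcal{D}_R(v)$, taking it in $\mathcal{D}_R(w)$ whenever $\mathcal{D}_R(w)\cap\mathcal{D}_R(v)\neq\emptyset$. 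If $\alpha_i\in\mathcal{D}_R(w)$, then $ws_i\lessdot vs_i$ and $\wt(ws_i,vs_i)=\wt(w,v)$ (from $v=s_{\wt(w,v)}w$, so $vs_i=s_{\wt(w,v)}ws_i$); when $us_i<u$, Proposition~\ref{prop:AD-deodhar-recursion} turns $\AD(u,v)$ into $\AD(us_i,vs_i)$ and $\AD(u,w)$ into $\AD(us_i,ws_i)$, and the inductive hypothesis on $[us_i,vs_i]$ (smaller $\ell$-sum, still carrying the top cover $ws_i\lessdot vs_i$) closes the case; when $us_i>u$, the lifting property gives $us_i\le w$ and $u\le ws_i$, so $\wt(u,us_i)\in\AD(u,w)$, and $\AD(u,v)=\AD(u,vs_i)+\mathbb{R}\wt(u,us_i)$ together with the inductive hypothesis on $[u,vs_i]$ and $\AD(u,ws_i)\subseteq\AD(u,w)$ finishes.

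\emph{The remaining case, and the main obstacle.} If $\mathcal{D}_R(w)\cap\mathcal{D}_R(v)=\emptyset$, then $ws_i>w$ and, by the lifting property on $w\lessdot v$, $w\le vs_i$; since $\ell(vs_i)=\ell(w)$ this forces $v=ws_i$ and $\wt(w,v)=w(\alpha_i)$. Here the load-bearing auxiliary fact is the congruence $w(\gamma)\equiv u(\gamma)\pmod{\AD(u,w)}$ for every root $\gamma$: walking any saturated chain $u\to w$, each reflection step $a\lessdot s_{\gamma'}a$ with $\gamma'=\wt(a,s_{\gamma'}a)\in\AD(u,w)$ moves $(\cdot)(\gamma)$ by a multiple of $\gamma'$, so $\mathbb{R}u(\alpha_i)$ and $\mathbb{R}w(\alpha_i)$ are interchangeable modulo $\AD(u,w)$. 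If $us_i>u$, Proposition~\ref{prop:AD-deodhar-recursion} gives $\AD(u,v)=\AD(u,w)+\mathbb{R}\wt(u,us_i)=\AD(u,w)+\mathbb{R}u(\alpha_i)=\AD(u,w)+\mathbb{R}\wt(w,v)$. If $us_i<u$, Proposition~\ref{prop:AD-deodhar-recursion} gives $\AD(u,v)=\AD(us_i,w)$, and applying the mirror inductive hypothesis to $[us_i,w]$ (bottom cover $us_i\lessdot u$, smaller $\ell$-sum) gives $\AD(us_i,w)=\AD(u,w)+\mathbb{R}\wt(us_i,u)$, again $=\AD(u,w)+\mathbb{R}\wt(w,v)$ by the congruence. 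This last subcase is the genuinely delicate one: the naive approach of finding a common lower cover of $w$ and a competing cover $x\lessdot v$ and invoking Lemma~\ref{lem:Dyer-rank2} breaks down because such a lower cover need not lie above $u$ (Bruhat order is not a meet-semilattice), which is exactly why I would carry the mirror statement through the induction; the congruence $w(\gamma)\equiv u(\gamma)\pmod{\AD(u,w)}$ then does the rest, reconciling the ``weight from below'' $u(\alpha_i)$ that Proposition~\ref{prop:AD-deodhar-recursion} produces with the ``weight from above'' $\wt(w,v)$ that (1) demands.
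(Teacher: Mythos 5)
Your derivation of (2) and (3) from (1) is correct and matches the paper, as does your treatment of the case where the chosen right descent $\alpha_i$ of $v$ also lies in $\mathcal{D}_R(w)$; and in the remaining case the congruence $w(\gamma)\equiv u(\gamma)\pmod{\AD(u,w)}$ is a correct and genuinely nice observation that cleanly settles the subcase $us_i>u$. The gap is the ``mirror'' statement. You invoke it for $[us_i,w]$ with bottom cover $us_i\lessdot u$ in the subcase $us_i<u$, claiming it is ``proved identically'' via a bottom-peeling version of Proposition~\ref{prop:AD-deodhar-recursion}. But the only bottom-peeling recursion available (the $w_0$-conjugate of Proposition~\ref{prop:AD-deodhar-recursion}) peels a right \emph{ascent} $s_j$ of the bottom element and passes to $[us_j,vs_j]$ or $[us_j,v]$, intervals with strictly \emph{larger} $\ell(u)+\ell(v)$, so the proposed induction is not well-founded for the mirror half. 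Worse, tracing the dualized argument: the mirror for $[us_i,w]$ with bottom cover $us_i\lessdot u$, in its own ``remaining case,'' reduces to statement (1) for $[u,ws_j]$ with top cover $w\lessdot ws_j$ --- an interval at the same rank and the same $\ell$-sum as $[u,v]$, and possibly equal to $[u,v]$ itself when $s_j=s_i$. The two halves of the simultaneous induction can thus feed back into each other at the same level, and the proposal supplies no well-ordering that breaks this cycle.

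The paper closes this case without any mirror statement, and the obstacle you cite to justify avoiding the diamond argument is not actually there. In its Case 3 ($w=vs_i$ with $\alpha_i$ the unique right descent of $v$), the paper chooses the lower cover \emph{first}: pick any $z$ with $u\le z\lessdot w$ (possible since $\ell(v)-\ell(u)\ge 2$) and complete the rank-$2$ interval $[z,v]=\{z,w,w',v\}$ \emph{upward}; the fourth element $w'$ automatically lies in $[u,v]$ because $w'\ge z\ge u$, so the failure of $[u,v]$ to be a meet-semilattice never enters. Since $w'\ne vs_i$, the cover $w'\lessdot v$ falls into Cases 1--2, giving $\AD(u,v)=\AD(u,w')+\mathbb{R}\wt(w',v)$; the induction hypothesis at rank $\ell-1$ applied to $[u,w]$ and $[u,w']$ with the covers from $z$, together with Lemma~\ref{lem:Dyer-rank2} on the diamond, exchanges $\{\wt(w',v),\wt(z,w')\}$ for $\{\wt(w,v),\wt(z,w)\}$ and yields $\AD(u,v)=\AD(u,w)+\mathbb{R}\wt(w,v)$. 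To repair your argument you would need either this diamond step or an independent, genuinely well-founded proof of the mirror statement; as written, the induction does not close.
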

\begin{proof}
We apply two layers of induction: first on $\ell=\ell(v)-\ell(u)$ and second on $\ell(u)$. The base cases where $\ell=0,1$ are true by definition. Assume $\ell\geq2$. Denote these statements as ($1_{\ell}$), ($2_{\ell}$) and ($3_{\ell}$) respectively.

($1_{\ell}$)$\Longrightarrow$($2_{\ell}$). For any such saturated chain, 
\begin{align*}
&\mathrm{span}_{\mathbb{R}}\{\wt(w^{(i)},w^{(i+1)})\:|\: 0\leq i\leq \ell-1\}\\
=&\mathrm{span}_{\mathbb{R}}\{\wt(w^{(i)},w^{(i+1)})\:|\: 0\leq i\leq \ell-2\}+\mathbb{R}\cdot\wt(w^{(\ell-1)},v)\\
=&\AD(u,w^{(\ell-1)})+\mathbb{R}\cdot\wt(w^{(\ell-1)},v)=\AD(u,v)
\end{align*}
by ($2_{\ell-1}$) and ($1_{\ell}$).

($2_{\ell}$)$\Longrightarrow$($3_{\ell}$). By Lemma~\ref{lem:AD-spanned-by-covers}, the labels of the upper covers from $w$ inside $[u,v]$ span $\AD(w,v)$, which by (2), equals the span of a saturated chain $C$ from $w$ to $v$. Similarly, the labels of the lower covers from $w$ have the same span as a saturated chain $C'$ from $u$ to $w$. Putting the two chains together and by ($2_{\ell}$), we obtain the desired equality.

Thus, we now focus on proving ($1_{\ell}$). Fix $w=s_{\alpha}v$. If $v$ has a right descent $\alpha_i$ such that $vs_i\neq w$, go to Case 1 and Case 2. Otherwise, $v$ must have a unique right descent $\alpha_i$ satisfying $vs_i=w=s_{\alpha}v$, and we go to Case 3.

\textbf{Case 1:} $vs_i\neq w$ and $us_i<u$. Note that we don't require $vs_i\geq u$ here. By Lemma~\ref{lem:strong-weak-diamond}, both $vs_i$ and $s_{\alpha}v$ cover $s_{\alpha}vs_i$. Also, since $s_{\alpha}v>u$ and both have right descents at $\alpha_i$, $s_{\alpha}vs_i>us_i$. By Proposition~\ref{prop:AD-deodhar-recursion} and induction hypothesis on $\ell(us_i)<\ell(u)$, 
\[\AD(u,v)=\AD(us_i,vs_i)=\AD(us_i,s_{\alpha}vs_i)+\mathbb{R}\alpha=\AD(u,s_{\alpha}v)+\mathbb{R}\alpha.\]

\textbf{Case 2:} $vs_i\neq w$ and $us_i>u$. In this case, by the lifting property (Proposition~\ref{prop:lifting}) on $u<v$, we have $u\leq vs_i$. Similarly as above, $s_{\alpha}v$ has a right descent at $\alpha_i$. By the lifting property on $s_{\alpha}v>u$, we have $s_{\alpha}vs_i\geq u$ and $s_{\alpha}v\geq us_i$. Let $\beta=\wt(u,us_i)$. By induction hypothesis on $\ell(vs_i)-\ell(u)<\ell(v)-\ell(u)$ and Proposition~\ref{prop:AD-deodhar-recursion}, 
\[\AD(u,v)=\AD(u,vs_i)+\mathbb{R}\beta=\AD(u,s_{\alpha}vs_i)+\mathbb{R}\alpha+\mathbb{R}\beta=\AD(u,s_{\alpha}v)+\mathbb{R}\alpha.\]

\textbf{Case 3:} $vs_i=s_{\alpha}v$. We are in this case when $\alpha_i$ is the only right descent of $v$. Choose any $w\gtrdot z\geq u$ and let $w'\neq w$ be the unique element such that $v\gtrdot w'\gtrdot z$. Let $\beta=\wt(w',v)$, $\gamma=\wt(z,w)$ and $\tau=\wt(z,w')$. By Lemma~\ref{lem:Dyer-rank2}, $\mathrm{span}_{\mathbb{R}}\{\alpha,\gamma\}=\mathrm{span}_{\mathbb{R}}\{\beta,\tau\}$. Since $\alpha_i$ is the only right descent of $v$, Case 1 and Case 2 can be applied to $u\leq w'\lessdot v$, giving us
\begin{align*}
\AD(u,v)=&\AD(u,w')+\mathbb{R}\beta\\
=&\AD(u,z)+\mathbb{R}\tau+\mathbb{R}\beta\\
=&\AD(u,z)+\mathbb{R}\gamma+\mathbb{R}\alpha\\
=&\AD(u,w)+\mathbb{R}\alpha
\end{align*}
by induction hypothesis ($1_{\ell-1}$).
\end{proof}

\begin{definition}\label{def:toric-interval}
A Bruhat interval $[u,v]$ is \emph{toric} if $\ad(u,v)=\ell(v)-\ell(u)$.
\end{definition}
In light of Proposition~\ref{prop:AD-usual-recurrence}(2), we always have $\ad(u,v)\leq \ell(v)-\ell(u).$
\begin{remark}
Unfortunately, being a toric interval is not a combinatorial invariant: the Bruhat interval $[1324,4231]$ (an example given in Remark 4.15 \cite{mahir-toric}) is isomorphic to the boolean lattice $B_4$, but is not toric; in the meantime, there exist many Bruhat intervals that are isomorphic to $B_4$ and are toric (for example $[12345,51234]$). The toric property is still very close to a combinatorial invariant as shown in the main result of \cite{mahir-toric} that if $\ell(v)-\ell(u)\leq n$, then $[u,v]$ is toric if and only if $[u,v]$ is a lattice. 
\end{remark}
In fact, knowing which intervals are toric provides us with full control over $\ad(u,v)$.
\begin{proposition}\label{prop:toric-ad}
$\ad(u,v)=\max_{w\in[u,v]}\ell(w)-\ell(u)$ where $[u,w]$ is toric.
\end{proposition}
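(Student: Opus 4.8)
The plan is to prove the two inequalities $\ad(u,v)\ge \ell(w)-\ell(u)$ for any $w\in[u,v]$ with $[u,w]$ toric, and $\ad(u,v)\le \ell(w)-\ell(u)$ for a well-chosen such $w$. The first is immediate: if $[u,w]$ is toric, pick a saturated chain from $u$ to $w$; by Proposition~\ref{prop:AD-usual-recurrence}(2) its $\ell(w)-\ell(u)$ edge labels span $\AD(u,w)$, and since $[u,w]\subseteq[u,v]$ we have $\AD(u,w)\subseteq\AD(u,v)$, so $\ad(u,v)\ge\ad(u,w)=\ell(w)-\ell(u)$. Taking the max over all toric $[u,w]$ gives $\ad(u,v)\ge\max_w\ell(w)-\ell(u)$.

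For the reverse inequality I would proceed by induction on $\ell(v)-\ell(u)$, using the recursion of Proposition~\ref{prop:AD-deodhar-recursion} along a right descent $\alpha_i$ of $v$. In the case $us_i<u$, we get $\AD(u,v)=\AD(us_i,vs_i)$, so $\ad(u,v)=\ad(us_i,vs_i)$; by induction there is $w'\in[us_i,vs_i]$ with $[us_i,w']$ toric and $\ad(us_i,vs_i)=\ell(w')-\ell(us_i)$. The goal is to lift $w'$ back to some $w\in[u,v]$ with $[u,w]$ toric and $\ell(w)-\ell(u)=\ell(w')-\ell(us_i)$; the natural candidate is $w=w's_i$ when $w'$ has no right descent at $\alpha_i$, or $w=w'$ otherwise, adjusting as needed so that the interval $[u,w]$ (or $[u,ws_i]$) maps isomorphically onto $[us_i,w']$ under the length-preserving, weight-preserving correspondence $x\leftrightarrow xs_i$ furnished by Lemma~\ref{lem:strong-weak-diamond}. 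In the case $us_i>u$, we have $\AD(u,v)=\AD(u,vs_i)+\mathbb{R}\cdot\wt(u,us_i)$; here either $\wt(u,us_i)\in\AD(u,vs_i)$, in which case $\ad(u,v)=\ad(u,vs_i)$ and we apply induction directly to $[u,vs_i]$, or else the sum is direct, $\ad(u,v)=\ad(u,vs_i)+1$, and we take the $w'\in[u,vs_i]$ given by induction and set $w=w's_i$: the lifting property (Proposition~\ref{prop:lifting}) gives $w\in[u,v]$, the chain realizing toricity of $[u,w']$ extends by the edge $w'\lessdot w's_i$ of weight... we must check $[u,w]$ is genuinely toric, i.e. that adjoining this one cover raises the algebraic dimension, which follows because $\wt(w',w)$ together with $\AD(u,w')\subseteq\AD(u,vs_i)$ cannot have already been in the span if $\wt(u,us_i)$ wasn't — this is the delicate bookkeeping point.

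The main obstacle I anticipate is exactly this bookkeeping: ensuring at each inductive step that the lifted element $w$ actually satisfies $[u,w]$ \emph{toric}, not merely that $\ell(w)-\ell(u)$ equals the claimed algebraic dimension. The cleanest way to handle it is to strengthen the inductive hypothesis to: there exists $w\in[u,v]$ with $[u,w]$ toric \emph{and} $\AD(u,w)=\AD(u,v)$ (equivalently, a saturated chain from $u$ to $w$ whose edge labels are linearly independent and span $\AD(u,v)$). Phrased this way, the induction becomes: extract from $\AD(u,v)$, via the recursions of Proposition~\ref{prop:AD-deodhar-recursion}, a maximal linearly independent set of cover-edge labels lying along a single saturated chain out of $u$; Proposition~\ref{prop:AD-usual-recurrence}(1),(2) guarantee such labels span, and Lemma~\ref{lem:strong-weak-diamond} together with the lifting property guarantee the chain can be taken saturated and anchored at $u$. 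Once the strengthened statement is set up, Case 3 of Proposition~\ref{prop:AD-usual-recurrence}'s proof (the $vs_i=s_\alpha v$ situation) is the one remaining subtlety, resolved as there by Lemma~\ref{lem:Dyer-rank2} applied to the rank-$2$ interval at the top.
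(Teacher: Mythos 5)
Your first inequality is fine, and your overall strategy (induction on $\ell(v)-\ell(u)$ through the descent recursion of Proposition~\ref{prop:AD-deodhar-recursion}, strengthened to ``some toric $[u,w]$ has $\AD(u,w)=\AD(u,v)$'') is a legitimately different route from the paper's; but as written it has a genuine gap in Case~1. There, induction hands you $w'\in[us_i,vs_i]$ with $[us_i,w']$ toric and $\AD(us_i,w')=\AD(us_i,vs_i)$. When $w's_i>w'$ the lift does work: the lifting property gives $u\leq w's_i\leq v$, and Proposition~\ref{prop:AD-deodhar-recursion} applied to $[u,w's_i]$ gives $\AD(u,w's_i)=\AD(us_i,w')=\AD(u,v)$ with $\ell(w's_i)-\ell(u)=\ell(w')-\ell(us_i)$, so $[u,w's_i]$ is toric. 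But when $w's_i<w'$, your fallback $w=w'$ has $\ell(w')-\ell(u)=\ell(w')-\ell(us_i)-1$, one short of the target, and no ``length-preserving correspondence $x\leftrightarrow xs_i$'' between $[u,w']$ and $[us_i,w']$ can exist (these intervals have different ranks, and in general $[u,v]$ and $[us_i,vs_i]$ are not isomorphic --- see Example~\ref{ex:deodhar-AD-induction-example}). ``Adjusting as needed'' is exactly the missing content: you must produce an upper cover $w'\lessdot w''\leq v$ with $\wt(w',w'')\notin\AD(u,w')$, and nothing in your outline supplies it. Your Case~2(b) is also not yet a proof: the new edge label is $\wt(w',w's_i)=\pm w'(\alpha_i)$, not $\wt(u,us_i)$, so its independence from $\AD(u,vs_i)$ does not follow merely ``because $\wt(u,us_i)$ wasn't in the span.'' (It can be repaired: apply Proposition~\ref{prop:AD-deodhar-recursion} to $[u,w's_i]$ to get $\AD(u,w's_i)=\AD(u,w')+\mathbb{R}\cdot\wt(u,us_i)=\AD(u,v)$, and note that $w's_i<w'$ is impossible in this subcase for the same reason.)

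The missing step can be supplied, and in fact it makes the whole descent recursion unnecessary. By Proposition~\ref{prop:AD-usual-recurrence}(3), $\AD(u,v)$ is spanned by the labels of covers incident to any $w\in[u,v]$; the lower covers of $w$ inside $[u,v]$ lie in $[u,w]$, so if $\AD(u,w)\subsetneq\AD(u,v)$ there must be an upper cover $w\lessdot w''\leq v$ with $\wt(w,w'')\notin\AD(u,w)$, and then $\ad(u,w'')\geq\ad(u,w)+1$ forces $[u,w'']$ to be toric whenever $[u,w]$ is. Iterating from $w=u$ produces a toric $[u,w]$ with $\AD(u,w)=\AD(u,v)$, which is your strengthened statement. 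For comparison, the paper argues dually: it fixes a saturated chain from $v$ down to $u$, records at which steps $\ad$ fails to drop, and uses a rank-two diamond swap (Claim~\ref{claim:swap-0-1}, via Lemma~\ref{lem:Dyer-rank2}) to push the non-contributing steps to the top, producing a coatom $v'$ with $\AD(u,v')=\AD(u,v)$ so that induction on $\ell(v)-\ell(u)$ closes.
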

The dual statement of Proposition~\ref{prop:toric-ad}: \[\ad(u,v)=\max_{w\in[u,v]}\ell(v)-\ell(w)\text{ where }[w,v]\text{ is toric},\] follows from the exact same argument. It is also clear from Proposition~\ref{prop:toric-ad} that the following more symmetric-looking statement holds: \[\ad(u,v)=\max_{u\leq x\leq y\leq v}\ell(y)-\ell(x)\text{ where }[x,y]\text{ is toric}.\]
\begin{proof}
For any $w\in[u,v]$ such that $[u,w]$ is toric, $\ell(w)-\ell(u)=\ad(u,w)\leq\ad(u,v)$. To show that $\ad(u,v)\leq\max_{w\in[u,v]}\ell(w)-\ell(u)$ where $[u,w]$ is toric, we use induction on $\ell(v)-\ell(u)$. Note that when $[u,v]$ is toric, the statement holds, and that by Lemma~\ref{lem:Dyer-rank2}, whenever $\ell(v)-\ell(u)\leq2$, $[u,v]$ is toric. Now assume that $[u,v]$ is not toric. It suffices to show that there exists $v\gtrdot v'\geq u$ such that $\ad(u,v)=\ad(u,v')$, since in which case, by induction hypothesis, \[\ad(u,v)=\ad(u,v')=\max_{\substack{w\in[u,v']\\ [u,w]\text{ is toric}}}\ell(w)-\ell(u)\leq\max_{\substack{w\in[u,v]\\ [u,w]\text{ is toric}}}\ell(w)-\ell(u)\leq\ad(u,v),\] as desired. We first establish a helpful result.
\begin{claim}\label{claim:swap-0-1}
Consider $x,y_1,y_2\in[u,v]$ such that $v\gtrdot y\gtrdot x$ for $y\in\{y_1,y_2\}$. If $\ad(u,v)-\ad(u,x)=1$, then there exists $y\in\{y_1,y_2\}$ such that $\AD(u,v)=\AD(u,y)$.
\end{claim}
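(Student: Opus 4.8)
The plan is to localize the whole question to the $2$-dimensional plane spanned by the four edge labels of the rank-$2$ interval $[x,v]$, and then finish with a dimension count. First I would fix notation: set $V_0 := \AD(u,x)$ and $d := \ad(u,x) = \dim V_0$, so the hypothesis $\ad(u,v) - \ad(u,x) = 1$ reads $\dim \AD(u,v) = d+1$. Since $\ell(v)-\ell(x) = 2$, the interval $[x,v]$ is a diamond and $y_1, y_2$ are its two distinct middle elements; write $a_i := \wt(x,y_i) \in \Phi^+$ and $b_i := \wt(y_i,v) \in \Phi^+$ for $i \in \{1,2\}$. From $v \gtrdot y_i \gtrdot x$ one has $s_{b_1}s_{a_1} = vx^{-1} = s_{b_2}s_{a_2} \neq 1$, so Lemma~\ref{lem:Dyer-rank2} gives a $2$-dimensional plane $P := \mathrm{span}_{\mathbb{R}}(a_1,b_1) = \mathrm{span}_{\mathbb{R}}(a_2,b_2)$ containing all of $a_1,b_1,a_2,b_2$.

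The next step is to evaluate the relevant spans via Proposition~\ref{prop:AD-usual-recurrence}(1). Running it along the saturated chain $u \leq x \lessdot y_i \lessdot v$ gives $\AD(u,y_i) = V_0 + \mathbb{R} a_i$ and then $\AD(u,v) = \AD(u,y_i) + \mathbb{R} b_i = V_0 + \mathbb{R} a_i + \mathbb{R} b_i = V_0 + P$ for each $i$, the last equality because $\{a_i,b_i\}$ spans $P$. From $\dim(V_0 + P) = \dim V_0 + \dim P - \dim(V_0 \cap P)$ together with $\dim(V_0+P) = d+1$, $\dim V_0 = d$, and $\dim P = 2$, I conclude that $V_0 \cap P$ is a line $L$. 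I would also record the inclusions $V_0 = \AD(u,x) \subseteq \AD(u,y_i) \subseteq \AD(u,v)$, so that each $\AD(u,y_i)$ is a subspace squeezed between dimensions $d$ and $d+1$.

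Finally, suppose for contradiction that $\AD(u,y_i) \neq \AD(u,v)$ for both $i$. Then each $\AD(u,y_i) = V_0 + \mathbb{R} a_i$ must have dimension exactly $d$ — otherwise it would have dimension $d+1$ and hence equal the $(d+1)$-dimensional space $\AD(u,v)$ — which forces $a_i \in V_0$, and thus $a_i \in V_0 \cap P = L$ for $i = 1,2$. But then $a_1$ and $a_2$ are positive roots proportional to one another, so $a_1 = a_2$, and therefore $y_1 = s_{a_1}x = s_{a_2}x = y_2$, contradicting $y_1 \neq y_2$. Hence $\AD(u,v) = \AD(u,y)$ for at least one $y \in \{y_1,y_2\}$, which is the claim. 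The only non-bookkeeping ingredients are the dimension count pinning $\dim(V_0 \cap P) = 1$ and the elementary fact that two proportional positive roots coincide; the one place to stay careful is the chain of inclusions $\AD(u,x) \subseteq \AD(u,y_i) \subseteq \AD(u,v)$, since it is what lets ``$\AD(u,y_i)$ strictly smaller than $\AD(u,v)$'' genuinely force $\dim \AD(u,y_i) = d$.
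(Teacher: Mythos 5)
Your proof is correct and follows essentially the same route as the paper's: argue by contradiction, use Lemma~\ref{lem:Dyer-rank2} on the rank-2 diamond $[x,v]$ to place all four edge labels in a common $2$-plane, and apply Proposition~\ref{prop:AD-usual-recurrence}(1) along the two saturated chains. The only cosmetic difference is the final contradiction: you derive $a_1=a_2$, hence $y_1=y_2$, from the line $V_0\cap P$, whereas the paper concludes $\AD(u,v)=\AD(u,x)$ and contradicts the dimension gap; both hinge on the same computations (and both, like the claim as it is actually used, implicitly require $y_1\neq y_2$).
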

\begin{proof}[Proof of claim]
Let the labels $\wt(y_1,v)$, $\wt(y_2,v)$, $\wt(x,y_1)$, $\wt(x,y_2)$ be $\alpha,\beta,\gamma,\tau$, respectively. Assume the opposite that $\AD(u,y)\subsetneq \AD(u,v)$ for $y\in\{y_1,y_2\}$. Since $\AD(u,x)\subset\AD(u,y)$ and $\ad(u,v)-\ad(u,x)=1$, we must have that $\AD(u,x)=\AD(u,y)$ for $y\in\{y_1,y_2\}$, meaning that $\gamma,\tau\in\AD(u,x)$. By Lemma~\ref{lem:Dyer-rank2}, $\mathrm{span}_{\mathbb{R}}\{\gamma,\tau\}=\mathrm{span}_{\mathbb{R}}\{\alpha,\beta\}$, so $\alpha,\beta\in\AD(u,x)$. By Proposition~\ref{prop:AD-usual-recurrence}(1), $\AD(u,v)=\AD(u,y_1)+\mathbb{R}\alpha=\AD(u,x)$, contradicting $\ad(u,v)=\ad(u,x)=1$.
\end{proof}
Now, fix a chain $v=v_0\gtrdot v_1\gtrdot\cdots\gtrdot v_k=u$, and consider the sequence of integers $d_i=\ad(u,v_{i-1})-\ad(u,v_i)\in\{0,1\}$ for $i=1,\ldots,k$. By Proposition~\ref{prop:AD-usual-recurrence}(2), $\sum_{i=1}^k d_i=\ad(u,v)<k$ because $[u,v]$ is not toric. Thus, there must be some zeros among the $d_i's$. We are going to tweak this chain to move $0$'s upwards.

If $d_j=1$ and $d_{j+1}=0$, let $v_j'\neq v_j$ be the unique element in $[v_{j+1},v_{j-1}]$. As $\ad(u,v_{j-1})-\ad(u,v_{j+1})=1$, by Claim~\ref{claim:swap-0-1}, there exists $y\in\{v_j,v_{j}'\}$ such that $\AD(u,y)=\AD(u,v_{j-1})$. Since $d_j=1$, $y\neq v_j$ and thus $y=v_j'$. Replace $v_j$ by $v_j'$ in the original chain, we see that $d_j'=0$ and $d_{j+1}=1$. Continue this process of moving $0$'s upwards, we will eventually end up with $v_1'\lessdot v$ such that $d_1'=0$, meaning that $\ad(u,v)=\ad(u,v_1')$ as desired.
\end{proof}

\begin{remark}
The material of this subsection can be generalized to any Coxeter groups with the exact same arguments, with a chosen Coxeter datum. We choose to stay within the realm of finite Weyl groups for consistency of notation throughout this paper. 
\end{remark}

\subsection{The torus complexity via Deodhar decompositions}\label{sub:torus-complexity}
We investigate the torus action on Richardson varieties in detail. Our main technical tool is the \emph{Deodhar decomposition}. We will follow the notation of \cite{marsh-rietsch}. 
\begin{definition}\cite[Section~3]{marsh-rietsch}\label{def:subexpressions}
For an expression $\textbf{w}=q_{i_1}\cdots q_{i_{\ell}}$ where each $q_{i_k}\in S\cup\{1\}$, define $w_{(k)}=q_{i_1}\cdots q_{i_k}$ for $k=1,\ldots,\ell$. Set $w_{(0)}=1$. Define the following sets:
\begin{align*}
J_{\mathbf{w}}^{+}:=&\{k\in\{1,\ldots,\ell\}\:|\: w_{(k-1)}<w_{(k)}\},\\
J_{\mathbf{w}}^{\circ}:=&\{k\in\{1,\ldots,\ell\}\:|\: w_{(k-1)}=w_{(k)}\},\\
J_{\mathbf{w}}^{-}:=&\{k\in\{1,\ldots,\ell\}\:|\: w_{(k-1)}>w_{(k)}\}.
\end{align*}
For a reduced expression $\textbf{v}=s_{i_1}\cdots s_{i_{\ell}}$, a \emph{subexpression} is obtained from $\textbf{v}$ by replacing some of the factors with $1$. A subexpression $\mathbf{u}$ of $\mathbf{v}$ is called \emph{distinguished} if $u_{(j)}\leq u_{(j-1)}s_{i_j}$ for all $j=1,\ldots,\ell$. In other words, if multiplication by $s_{i_j}$ (on the right) decreases the length of $u_{(j-1)}$, then we must have $u_{(j)}=u_{(j-1)}s_{i_j}$ in a distinguished subexpression. 
\end{definition}

For every simple root $\alpha_i\in\Delta$, there is a corresponding homomorphism $\varphi_i:\SL_2\rightarrow G$. Define the following elements \[x_{\alpha_i}(m)=\varphi_i\begin{pmatrix}1&m\\0&1\end{pmatrix},\quad y_{\alpha_i}(m)=\varphi_i\begin{pmatrix}1&0\\m&1\end{pmatrix}, \quad \dot s_i=\varphi_i\begin{pmatrix}0&-1\\1&0\end{pmatrix}.\]
For the majority of this work, we abuse notation and refer to the coset representative of $w \in W =N_G(T)/T$ in $N_G(T)$ by $w$. In their work, Marsch and Rietsch fix specific coset representatives. To stay in line with their notation and constructions, in this self-contained section,  we follow their notational conventions. For $s_i \in W$, we fix $\dot s_i$ as our coset representative of $s_i \in W$ in $N_G(T)$. For any $w \in W$ with $w = s_{i_1} \cdots s_{i_\ell}$, let $\dot w := \dot s_{i_1} \cdots \dot s_{i_\ell}$ be  our coset representative of $w$ in $N_G(T)$. 

As introduced in Section~\ref{sub:root-subgroups}, for each $\alpha \in \Phi$ there is an associated root subgroup $U_{\alpha}$ in $G$. The simple root subgroup $U_{\alpha_i}$ for $\alpha_i \in \Delta$ is the image of $x_{\alpha_i}$ in $G$, namely $\{ x_{\alpha_i}(m) \mid m \in \mathbb{C} \}$. For $\alpha \in \Phi$, there exists $w \in W$ and $\alpha_i \in \Delta$ such that $\alpha = w(\alpha_i)$. Then $U_{\alpha} = \dot w U_{\alpha_i} {\dot w}^{-1}$; it is the image of $x_{\alpha}(m) := \dot w x_{\alpha_i}(m) {\dot w}^{-1}$ in $G$. In a similar way, define $y_{\alpha}(m) := \dot w y_{\alpha_i}(m) {\dot w}^{-1}$. Then $y_{\alpha}(m) = x_{-\alpha}(m)$. It will be important later, that for any $t \in T$ and $m\in\mathbb{C}$,
\begin{equation} \label{eq:tactionrootsubgroup}
t x_{\alpha}(m) = x_{\alpha}(\alpha(t) m) t.
\end{equation}
\begin{definition}\cite[Definition~5.1]{marsh-rietsch}\label{def:deodhar-component}
For a reduced word $\mathbf{v}=s_{i_1}\cdots s_{i_{\ell}}$ and a distinguished subword $\mathbf{u}$, define the following set \[G_{\mathbf{u},\mathbf{v}}=\left\{g= g_1 g_2\cdots g_\ell \left
|\begin{array}{ll}
 g_k= x_{\alpha_{i_k}}(m_k) {\dot s_{i_k}}^{-1}& \text{ if $k\in J_{\mathbf{u}}^{-}$,}\\
 g_k= y_{\alpha_{i_k}}(p_k)& \text{ if $k\in J^{\circ}_{\mathbf{u}}$,}\\
 g_k=\dot s_{i_k}& \text{ if $k\in J^+_{\mathbf{u}}$,}
 \end{array}\text{    for $p_k\in\mathbb{C}^*,\, m_k\in\mathbb{C}$}\right. \right\}.\]
We have $G_{\mathbf{u},\mathbf{v}}\simeq (\mathbb{C}^*)^{J_{\mathbf{u}}^{\circ}}\times \mathbb{C}^{J_{\mathbf{u}}^-}$. Now, the \emph{Deodhar component} is $D_{\mathbf{u},\mathbf{v}}:=G_{u,v}B/B\subset G/B$. We have the isomorphism $G_{u,v}\simeq D_{\mathbf{u},\mathbf{v}}$ via $g\mapsto gB/B$ (\cite[Proposition 5.2 ]{marsh-rietsch}).
\end{definition}
A distinguished subword $\mathbf{u}$ is called \emph{positive} if $u_{(j-1)}\leq u_{(j)}$ for all $j$, i.e. $J_{\mathbf{u}}^{-}=\emptyset$. Given a fixed reduced word $\mathbf{v}$ of $v$ and an element $u\leq v$, there is a unique positive distinguished subword $\mathbf{u}_+$ of $u$, whose Deodhar component $D_{\mathbf{u}_+,\mathbf{v}}$ has the same dimension as $\R_{u,v}$ by \cite[Lemma~3.5]{marsh-rietsch}.

\begin{theorem}\cite[Theorem~1.1]{deodhar}
Given $u\leq v$ and fix a reduced word $\mathbf{v}$ of $v$. The Richardson cell has a decomposition $\R_{u,v}^{\circ}=\bigsqcup_{\mathbf{u}}D_{\mathbf{u},\mathbf{v}}$ into disjoint union, where the sum is over all distinguished subwords $\mathbf{u}$ of $\mathbf{v}$ that evaluate to $u$.
\end{theorem}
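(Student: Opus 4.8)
The plan is an induction on $\ell := \ell(v)$, peeling the last letter $s := s_{i_\ell}$ off the chosen reduced word. Write $\mathbf v' := s_{i_1}\cdots s_{i_{\ell-1}}$, a reduced word for $v' := vs$ (so $v's > v'$), and let $\pi\colon G/B \to G/P_s$ be the associated $\mathbb P^1$-bundle. The base case $v = \mathrm{id}$ is immediate, since then $u = \mathrm{id}$ and $\R^\circ_{\mathrm{id},\mathrm{id}} = \{B\} = D_{\varnothing,\varnothing}$. For the inductive step I would begin with the purely combinatorial bookkeeping. Using the lifting property (Proposition~\ref{prop:lifting}) to control whether $u$ and $us$ lie weakly below $v'$, one checks that the distinguished subwords of $\mathbf v$ evaluating to $u$ are obtained from those of $\mathbf v'$ as follows: if $s \in \mathcal D_R(u)$, the last letter must be used, contributes a $J^+$-step, and the truncation is a distinguished subword of $\mathbf v'$ evaluating to $us$ (and here $us \le v'$ automatically); if $s \notin \mathcal D_R(u)$, then either the last letter is unused, contributing a $J^\circ$-step with truncation a distinguished subword of $\mathbf v'$ evaluating to $u$ (and $u \le v'$ automatically, by lifting), or it is used, contributing a $J^-$-step with truncation a distinguished subword of $\mathbf v'$ evaluating to $us$ (this option present exactly when $us \le v'$).

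The geometric core is to match this trichotomy with the fibres of $\pi$ restricted to $\R^\circ_{u,v}$. Recall the standard bundle facts: since $vs < v$, $\pi(X_v^\circ) = Bv'P_s/P_s = \pi(X^\circ_{v'})$ and $\pi^{-1}(Bv'P_s/P_s) = X_v^\circ \sqcup X^\circ_{v'}$, with a fibre $F \cong \mathbb P^1$ meeting $X^\circ_{v'}$ in a unique point $a$ and meeting $X_v^\circ$ in $F \setminus \{a\}$; dually, $\pi^{-1}(B^-uP_s/P_s) = (X^u)^\circ \sqcup (X^{us})^\circ$, and $F$ meets the lower-dimensional of the cells $(X^u)^\circ, (X^{us})^\circ$ in a single point and the other in its complement. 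Intersecting: when $s \in \mathcal D_R(u)$, $(X^u)^\circ$ is lower-dimensional, so $F \cap (X^u)^\circ$ is a point $q$ and $\R^\circ_{u,v} \cap F = (F \setminus \{a\}) \cap \{q\}$ is $\{q\}$ exactly when $q \ne a$, which happens exactly when $a \in \R^\circ_{us,v'}$; when $s \notin \mathcal D_R(u)$, $(X^{us})^\circ$ is lower-dimensional, so $F \cap (X^{us})^\circ$ is a point $b$ and $\R^\circ_{u,v} \cap F = F \setminus \{a,b\}$, which is an $\mathbb A^1$ if $a = b$ (that is, $a \in \R^\circ_{us,v'}$) and a $\mathbb G_m$ if $a \ne b$ (that is, $a \in \R^\circ_{u,v'}$). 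The heart of the argument is to make these slices explicit inside the $\SL_2$-copy $\varphi_{i_\ell}(\SL_2)$: choosing a representative $h$ of the section point $a$ with $h \in Bv'B$ --- and, after replacing it by the normal-form representative in $G_{\mathbf u',\mathbf v'}$ provided by the inductive hypothesis, with $h \in B^-uB$ or $h \in B^-(us)B$ in the appropriate subcase --- one computes $F \setminus \{a\} = \{h\,x_{\alpha_{i_\ell}}(m)\dot s_{i_\ell}^{-1}B : m \in \mathbb C\}$, $F \setminus \{a, h\dot s_{i_\ell}B\} = \{h\,y_{\alpha_{i_\ell}}(p)B : p \in \mathbb C^*\}$, and $q = h\dot s_{i_\ell}B$; here the commutation relation \eqref{eq:tactionrootsubgroup} and the mixed Bruhat product $(B^-wB)(Bs_{i_\ell}B) = B^-ws_{i_\ell}B$ (valid when $\ell(ws_{i_\ell}) > \ell(w)$) do the work. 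Thus the fibrewise slice of $\R^\circ_{u,v}$ over a point of a lower Deodhar component is precisely the image of one admissible last factor $g_\ell$; feeding in the inductive hypotheses $\R^\circ_{u,v'} = \bigsqcup_{\mathbf u''} D_{\mathbf u'',\mathbf v'}$ and $\R^\circ_{us,v'} = \bigsqcup_{\mathbf u'} D_{\mathbf u',\mathbf v'}$ and appending $g_\ell$ yields $\R^\circ_{u,v} = \bigsqcup_{\mathbf u} D_{\mathbf u,\mathbf v}$; disjointness follows since distinct fibres are disjoint and the $\SL_2$-parametrization of each $\R^\circ_{u,v} \cap F$ is a bijection.

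Worth isolating separately: the inclusion $D_{\mathbf u,\mathbf v} \subseteq X_v^\circ$ is automatic, because every admissible factor $g_k$ lies in $B\dot s_{i_k}B$ (clear for $J^+$ and $J^-$; for $J^\circ$, $y_{\alpha_{i_k}}(p) \in B\dot s_{i_k}B$ precisely because $p \ne 0$) and $\mathbf v$ reduced forces $B\dot s_{i_1}B \cdots B\dot s_{i_\ell}B = BvB$. The main obstacle is exactly the $\SL_2$-level bookkeeping in the geometric step: one must be scrupulous about coset representatives in $N_G(T)$ (which is precisely why Definition~\ref{def:deodhar-component} is phrased with the fixed lifts $\dot s_i$), about the fact that a point of a lower Deodhar component need not a priori be represented by an element of the special product form of Definition~\ref{def:deodhar-component} but can be normalized to be so, and about checking that which of the three behaviours of $\R^\circ_{u,v} \cap F$ occurs is governed exactly by $\mathcal D_R(u)$ together with whether the section point $a$ lands in $\R^\circ_{u,v'}$ or $\R^\circ_{us,v'}$ --- and by nothing finer. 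With those points handled, everything else is the formal recursion described above.
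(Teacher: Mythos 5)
This statement is imported verbatim from Deodhar \cite[Theorem~1.1]{deodhar} (in the Marsh--Rietsch formulation), and the paper gives no proof of it, so there is no internal argument to compare against. Your sketch is, in substance, a correct reconstruction of Deodhar's original proof: induction on $\ell(v)$ via the $\mathbb{P}^1$-fibration $G/B\to G/P_s$ for the last letter $s$ of the reduced word, with the three fibre behaviours (a point, an $\mathbb{A}^1$, a $\mathbb{C}^*$) correctly matched --- via the lifting property --- to the $J^{+}$, $J^{-}$, $J^{\circ}$ options for the last step of a distinguished subexpression, and with the explicit $\SL_2$-level normal forms of Marsh--Rietsch supplying the parametrization $G_{\mathbf{u},\mathbf{v}}\cong D_{\mathbf{u},\mathbf{v}}$; the subtleties you flag (coset representatives, normalizing the base point $h$ to lie in $G_{\mathbf{u}',\mathbf{v}'}$, and the containment $D_{\mathbf{u},\mathbf{v}}\subseteq X_v^{\circ}$ from $B\dot s_{i_1}B\cdots B\dot s_{i_\ell}B=BvB$) are exactly the right ones to handle.
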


\begin{ex}
Let $v=321$ and $u=123$ in $S_3$. Pick a reduced word $\mathbf{v}=s_1s_2s_1$, and we have two distinguished subwords $s_11s_1$ and $111$ for $u$. Among these two distinguished subwords, $111$ is positive. The components look like
{ \renewcommand*{\arraystretch}{.8} \setlength{\arraycolsep}{3pt}
\begin{align*} G_{s_11s_1,s_1s_2s_1} & =\left\{\begin{pmatrix}&{-}1&\\1&&\\&&1\end{pmatrix}\begin{pmatrix}1&&\\&1&\\&p_2&1\end{pmatrix}\begin{pmatrix}{-}m_3&1&\\1&&\\&&1\end{pmatrix}=\begin{pmatrix}{-}1&&\\{-}m_3&1&\\p_2&&1\end{pmatrix}\middle|\ p_2\in\mathbb{C}^*,m_3\in\mathbb{C}\right\},  \\ 
G_{111,s_1s_2s_1} & =\left\{\begin{pmatrix}1&&\\p_1&1&\\&&1\end{pmatrix}\begin{pmatrix}1&&\\&1&\\&p_2&1\end{pmatrix}\begin{pmatrix}1&&\\p_3&1&\\&&1\end{pmatrix}=\begin{pmatrix}1&&\\p_1{+}p_3&1&\\p_2p_3&p_2&1\end{pmatrix}\middle|\ p_1,p_2,p_3\in\mathbb{C}^*\right\} .\end{align*}}
Then we look at how the torus acts on specific points in the Deodhar components.
{ \renewcommand*{\arraystretch}{.8} \setlength{\arraycolsep}{3pt}
\begin{align*}
&\color{red}\begin{pmatrix}t_1&&\\&t_2&\\&&t_3\end{pmatrix}\color{black}\begin{pmatrix}&{-}1&\\1&&\\&&1\end{pmatrix}\begin{pmatrix}1&&\\&1&\\&p_2&1\end{pmatrix}\begin{pmatrix}{-}m_3&1&\\1&&\\&&1\end{pmatrix}B/B\\
=&\begin{pmatrix}&{-}1&\\1&&\\&&1\end{pmatrix}\color{red}\begin{pmatrix}t_2&&\\&t_1&\\&&t_3\end{pmatrix}\color{black}\begin{pmatrix}1&&\\&1&\\&p_2&1\end{pmatrix}\begin{pmatrix}{-}m_3&1&\\1&&\\&&1\end{pmatrix}B/B\\
=&\begin{pmatrix}&{-}1&\\1&&\\&&1\end{pmatrix}\begin{pmatrix}1&&\\&1&\\&p_2\frac{t_3}{t_1}&1\end{pmatrix}\color{red}\begin{pmatrix}t_2&&\\&t_1&\\&&t_3\end{pmatrix}\color{black}\begin{pmatrix}{-}m_3&1&\\1&&\\&&1\end{pmatrix}B/B\\
=&\begin{pmatrix}&{-}1&\\1&&\\&&1\end{pmatrix}\begin{pmatrix}1&&\\&1&\\&p_2\frac{t_3}{t_1}&1\end{pmatrix}\begin{pmatrix}{-}m_3\frac{t_2}{t_1}&1&\\1&&\\&&1\end{pmatrix}\color{red}\begin{pmatrix}t_1&&\\&t_2&\\&&t_3\end{pmatrix}\color{black}B/B\\
=&\begin{pmatrix}&{-}1&\\1&&\\&&1\end{pmatrix}\begin{pmatrix}1&&\\&1&\\&p_2\frac{t_3}{t_1}&1\end{pmatrix}\begin{pmatrix}{-}m_3\frac{t_2}{t_1}&1&\\1&&\\&&1\end{pmatrix}B/B,
\end{align*}}where the torus ``disappears" from the right since $T\subset B$. Thus, we can say that $T$ acts on $D_{s_11s_1,s_1s_2s_1}$ by weights $e_3-e_1$ and $e_2-e_1$, which span a $2$-dimensional space in the root space. With a similar calculation,
{ \renewcommand*{\arraystretch}{.8} \setlength{\arraycolsep}{3pt}
\begin{align*}
&\color{red}\begin{pmatrix}t_1&&\\&t_2&\\&&t_3\end{pmatrix}\color{black}\begin{pmatrix}1&&\\p_1&1&\\&&1\end{pmatrix}\begin{pmatrix}1&&\\&1&\\&p_2&1\end{pmatrix}\begin{pmatrix}1&&\\p_3&1&\\&&1\end{pmatrix}B/B\\
=&\begin{pmatrix}1&&\\p_1\frac{t_2}{t_1}&1&\\&&1\end{pmatrix}\begin{pmatrix}1&&\\&1&\\&p_2\frac{t_3}{t_2}&1\end{pmatrix}\begin{pmatrix}1&&\\p_3\frac{t_2}{t_1}&1&\\&&1\end{pmatrix}B/B,
\end{align*}}so $T$ acts on $D_{111,s_1s_2s_1}$ by weights $e_2-e_1,e_3-e_2,e_2-e_1$, whose span is $2$-dimensional. 
\end{ex}

\begin{definition}\label{def:root-for-subword}
Fix a reduced word $\mathbf{v}$ of $v$ and a distinguished subword $\mathbf{u}$. For each $k\in J_{\mathbf{u}}^{\circ}$, define $\beta_k=\wt(u_{(k-1)},u_{(k-1)}s_{i_k})$; for each $k\in J_{\mathbf{u}}^-$, define $\beta_k=\wt(u_{(k)},u_{(k-1)})$ (cf. Definition~\ref{def:Bruhat-graph}). Note that in both cases, $s_{\beta_k}=u_{(k-1)}s_{i_k}u_{(k-1)}^{-1}$ so $\beta_k=\pm u_{(k-1)}(\alpha_{i_k})$. To be precise, when $k\in J_{\mathbf{u}}^{\circ}$, $u_{(k-1)}<u_{(k-1)}s_{i_k}$ and when $k\in J_{\mathbf{u}}^-$, $u_{(k-1)}>u_{(k-1)}s_{i_k}$, so we have that $\beta_k=u_{(k-1)}(\alpha_{i_k})$ when $k\in J_{\mathbf{u}}^{\circ}$ and $\beta_k=-u_{(k-1)}(\alpha_{i_k})$ when $k\in J_{\mathbf{u}}^-$. Also define \[U_k = \begin{cases} U^{*}_{-\beta_k} & \textrm{if }k \in J_{\mathbf{u}}^{\circ} \\ U_{-\beta_k} & \textrm{if }k \in J_{\mathbf{u}}^- \\ \end{cases}.\] 

Define $\TD(\mathbf{u},\mathbf{v})=\mathrm{span}_{\mathbb{R}}\{\beta_k\:|\: k\in J_{\mathbf{u}}^{\circ}\cup J_{\mathbf{u}}^-\}$ and $\td(\mathbf{u},\mathbf{v})=\dim\TD(\mathbf{u},\mathbf{v})$. 

For any $x\in D_{\mathbf{u},\mathbf{v}}$, let $g\in G_{\mathbf{u},\mathbf{v}}$ be the \emph{standard form} of $x$ if they lie in the same $B$-coset. For such a $g$, let
$$g^{(k)} = \begin{cases} p_k & \textrm{if }k \in J_{\mathbf{u}}^{\circ} \\ m_k & \textrm{if }k \in J_{\mathbf{u}}^- \\ \end{cases}$$
We say that $x$ is a \emph{general} point in $D_{\mathbf{u},\mathbf{v}}$ if the corresponding $m_k$'s in its standard form are all nonzero (cf. Definition~\ref{def:deodhar-component}). 
\end{definition}

\begin{lemma}
\label{lemma:DeodharcomponentUnipotentSubgroup}
Fix $u\leq v$ and a reduced word $\mathbf{v}=s_{i_1}\cdots s_{i_{\ell}}$ of $v$. Let $\mathbf{u}=u_1\cdots u_\ell$ be a distinguished subword of $\mathbf{v}$ for $u$. Let $J_{\mathbf{u}}^{\circ}\cup J_{\mathbf{u}}^- = \{ j_1 < j_2 < \cdots < j_z \}$ and $g \in G_{\mathbf{u},\mathbf{v}}$. Then $$g = \displaystyle \prod_{k=1}^z x_{-\beta_{j_k}}(g^{(j_k)}) \dot u, \qquad D_{\mathbf{u},\mathbf{v}} = \displaystyle \prod_{k=1}^{z} U_{j_k} \dot u B.$$
\end{lemma}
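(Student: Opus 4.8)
The plan is to prove both identities simultaneously by induction on $\ell$, peeling off the last factor $g_\ell$ of $g = g_1 \cdots g_\ell$ and commuting the resulting simple-root-subgroup element (or reflection) leftward past the coset representatives that precede it. First I would set up the induction: write $\mathbf{v}' = s_{i_1} \cdots s_{i_{\ell-1}}$, let $\mathbf{u}'$ be the truncated distinguished subword $u_1 \cdots u_{\ell-1}$ for $u_{(\ell-1)}$, and note $g' := g_1 \cdots g_{\ell-1} \in G_{\mathbf{u}',\mathbf{v}'}$, so by induction $g' = \prod_{k=1}^{z'} x_{-\beta_{j_k}}(g'^{(j_k)}) \dot u_{(\ell-1)}$, where $\{j_1 < \cdots < j_{z'}\} = J_{\mathbf{u}'}^{\circ} \cup J_{\mathbf{u}'}^{-}$. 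Then $g = g' g_\ell$, and I split into the three cases for $g_\ell$ according to whether $\ell \in J_{\mathbf{u}}^+$, $J_{\mathbf{u}}^{\circ}$, or $J_{\mathbf{u}}^-$.

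In the case $\ell \in J_{\mathbf{u}}^+$, we have $g_\ell = \dot s_{i_\ell}$ and $\dot u = \dot u_{(\ell-1)} \dot s_{i_\ell}$ (using that the subword is reduced on these letters, so that $\ell(u_{(\ell-1)} s_{i_\ell}) = \ell(u_{(\ell-1)}) + 1$ and the chosen coset representatives multiply correctly), while $J_{\mathbf{u}}^{\circ} \cup J_{\mathbf{u}}^- = J_{\mathbf{u}'}^{\circ} \cup J_{\mathbf{u}'}^-$, so the formula is immediate. In the case $\ell \in J_{\mathbf{u}}^{\circ}$, we have $g_\ell = y_{\alpha_{i_\ell}}(p_\ell) = x_{-\alpha_{i_\ell}}(p_\ell)$ and $\dot u = \dot u_{(\ell-1)}$; I conjugate: $g'_{} g_\ell = \left(\prod_k x_{-\beta_{j_k}}(g'^{(j_k)})\right) \dot u_{(\ell-1)} x_{-\alpha_{i_\ell}}(p_\ell)$, and $\dot u_{(\ell-1)} x_{-\alpha_{i_\ell}}(p_\ell) \dot u_{(\ell-1)}^{-1} = x_{u_{(\ell-1)}(-\alpha_{i_\ell})}(\pm p_\ell) = x_{-\beta_\ell}(\pm p_\ell)$ by the definition of $U_\alpha$ as $\dot w U_{\alpha_i} \dot w^{-1}$ and the sign convention in Definition~\ref{def:root-for-subword} (recall $\beta_\ell = u_{(\ell-1)}(\alpha_{i_\ell})$ here). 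Absorbing the sign into the parameter gives the claimed product $\prod_{k=1}^{z} x_{-\beta_{j_k}}(g^{(j_k)}) \dot u$ with the new index $\ell = j_z$. The case $\ell \in J_{\mathbf{u}}^-$ is the same computation with $g_\ell = x_{\alpha_{i_\ell}}(m_\ell) \dot s_{i_\ell}^{-1}$: here $\dot u_{(\ell-1)} = \dot u \dot s_{i_\ell}$ up to the subword being distinguished (since $u_{(\ell-1)} s_{i_\ell} = u_{(\ell)}$ with a length drop), so $g' g_\ell = \left(\prod_k x_{-\beta_{j_k}}(g'^{(j_k)})\right) \dot u \dot s_{i_\ell} x_{\alpha_{i_\ell}}(m_\ell) \dot s_{i_\ell}^{-1} = \left(\prod_k \cdots\right) \dot u \, x_{-\alpha_{i_\ell}}(\pm m_\ell) \dot u^{-1} \dot u$, wait — more carefully, $\dot s_{i_\ell} x_{\alpha_{i_\ell}}(m_\ell) \dot s_{i_\ell}^{-1} = x_{s_{i_\ell}(\alpha_{i_\ell})}(\pm m_\ell) = x_{-\alpha_{i_\ell}}(\pm m_\ell)$, and then conjugating by $\dot u_{(\ell)} = \dot u$ turns this into $x_{-\beta_\ell}(\pm m_\ell)$ since $\beta_\ell = \wt(u_{(\ell)}, u_{(\ell-1)}) = -u_{(\ell-1)}(\alpha_{i_\ell}) = u_{(\ell)}(\alpha_{i_\ell})$. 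Once the first identity is established, the second follows by taking the image in $G/B$: $D_{\mathbf{u},\mathbf{v}} = G_{\mathbf{u},\mathbf{v}} B/B = \left\{ \prod_{k=1}^z x_{-\beta_{j_k}}(g^{(j_k)}) \dot u B : p_k \in \mathbb{C}^*, m_k \in \mathbb{C} \right\} = \prod_{k=1}^z U_{j_k} \dot u B$, where $U_{j_k} = U^*_{-\beta_{j_k}}$ for $j_k \in J_{\mathbf{u}}^{\circ}$ (the parameter ranges over $\mathbb{C}^*$) and $U_{j_k} = U_{-\beta_{j_k}}$ for $j_k \in J_{\mathbf{u}}^-$ (parameter over $\mathbb{C}$), exactly matching Definition~\ref{def:root-for-subword}.

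The main obstacle I anticipate is keeping the coset-representative bookkeeping honest: the equalities $\dot u = \dot u_{(\ell-1)} \dot s_{i_\ell}$ (case $J^+$) and $\dot u_{(\ell-1)} = \dot u \dot s_{i_\ell}$ (case $J^-$) are precisely where the definition of $\dot w$ via a reduced word matters, and one must check these hold for the \emph{distinguished} subword — for $J^+$ this is a genuine length-additive reduced product so $\dot u_{(\ell-1)} \dot s_{i_\ell}$ is a legitimate coset representative of $u_{(\ell-1)} s_{i_\ell} = u$, while for $J^-$ one writes $\dot u_{(\ell-1)}$, which represents $u_{(\ell)} s_{i_\ell}$ with a length increase when read as $u_{(\ell)} \cdot s_{i_\ell}$, hence equals $\dot u \dot s_{i_\ell}$. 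A secondary, purely cosmetic issue is tracking the signs produced by conjugation $\dot w x_{\alpha}(m) \dot w^{-1} = x_{w(\alpha)}(\pm m)$; since every parameter is a free variable (in $\mathbb{C}$ or $\mathbb{C}^*$, both of which are stable under $m \mapsto -cm$ for $c \neq 0$), these signs never affect the final set-level statement, and I would simply remark once that they can be absorbed into a reparametrization rather than carry them through.
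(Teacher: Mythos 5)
Your proposal is correct and follows essentially the same route as the paper's proof: induction on $\ell$, splitting into the three cases $\ell \in J_{\mathbf{u}}^{+}, J_{\mathbf{u}}^{\circ}, J_{\mathbf{u}}^{-}$, using the coset-representative identities $\dot u = \dot u_{(\ell-1)}\dot s_{i_\ell}$ (resp.\ $\dot u = \dot u_{(\ell-1)}\dot s_{i_\ell}^{-1}$) and conjugation to push the root-subgroup factor leftward, then reading off the second identity set-theoretically in $G/B$. Your explicit tracking of the signs under conjugation (absorbed by reparametrization) is a harmless extra precaution the paper elides by its definition of $x_\alpha(m)$, and the only cosmetic omission is that you do not write out the trivial base case $\ell=1$.
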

\begin{proof}
The first equality directly implies the second, and hence we focus on proving the first via induction on $\ell$. When $\ell=1$, $u = u_1 = u_{(1)}$. If $u_1 = s_{i_1}$, then $J_{\mathbf{w}}^{+} = \{ 1 \}$ and $g = \dot s_{i_1}$. Otherwise, if $u_1 = 1$, then $J_{\mathbf{w}}^{\circ} = \{ 1 \}$ and $g = y_{\alpha_{i_1}}(g^{(1)}) = x_{-\alpha_{i_1}}(g^{(1)}) = x_{-\beta_1}(g^{(1)})$.

Assume $\ell \geq 2$. By the inductive hypothesis, applied to $\mathbf{v'}=s_{i_1}\cdots s_{i_{\ell-1}}$ and $\mathbf{u'} = u_1\cdots u_{\ell - 1}$ a distinguished subword of $\mathbf{v'}$ for $u_{(\ell-1)}$, 
\begin{equation}\label{eq:inductiveg}
g_1 \cdots g_{\ell - 1} = \displaystyle \prod_{k=1}^{z'} x_{-\beta_{j_k}}(g^{(j_k)}) \dot u_{(\ell-1)},
\end{equation}
where $J_{\mathbf{u'}}^{\circ}\cup J_{\mathbf{u'}}^- = \{ j_1 < j_2 < \cdots < j_{z'} \} \subseteq J_{\mathbf{u}}^{\circ}\cup J_{\mathbf{u}}^-$ and $z'$ either equals $z$ or $z-1$.

There are three possible cases to consider.

\textbf{Case 1:} $\ell \in J_{\mathbf{u}}^{+}$. In this case, $\dot u_{(\ell - 1)} g_\ell = \dot u_{(\ell - 1)} \dot s_{i_\ell} = \dot u_{(\ell)} = \dot u$. In addition, $J_{\mathbf{u}}^{\circ} \cup J_{\mathbf{u}}^{-} = J_{\mathbf{u'}}^{\circ} \cup J_{\mathbf{u'}}^{-}$ and $z=z'$. The two preceding sentences, combined with \eqref{eq:inductiveg}, imply $$g=\displaystyle \prod_{k=1}^{z'} x_{-\beta_{j_k}}(g^{(j_k)}) \dot u_{(\ell-1)}g_\ell = \prod_{k=1}^{z} x_{-\beta_{j_k}}(g^{(j_k)}) \dot u.$$

\textbf{Case 2:} $\ell \in J_{\mathbf{u}}^{\circ}$. In this case, $\dot u_{(\ell - 1)} g_\ell = \dot u_{(\ell - 1)} y_{\alpha_{i_\ell}}(g^{(\ell)}) =  \dot u_{(\ell - 1)}x_{-\alpha_{i_\ell}}(g^{(\ell)})$. Then $$\dot u_{(\ell - 1)}x_{-\alpha_{i_\ell}}(g^{(\ell)}) = \dot u_{(\ell - 1)}x_{-\alpha_{i_\ell}}(g^{(\ell)}) {\dot u_{(\ell - 1)}}^{-1} \dot u_{(\ell - 1)} = x_{-u_{(\ell - 1)}(\alpha_{i_\ell})}(g^{(\ell)}) \dot u_{(\ell-1)}=  x_{-\beta_\ell}(g^{(\ell)}) \dot u,$$ where the last equality follows from the case hypothesis and the definition of $\beta_\ell$. In addition, $J_{\mathbf{u}}^{\circ} \cup J_{\mathbf{u}}^{-} = J_{\mathbf{u'}}^{\circ} \cup J_{\mathbf{u'}}^{-} \cup \{ \ell \}$ and $z=z'+1$. Applying \eqref{eq:inductiveg} yields $$g=\displaystyle \prod_{k=1}^{z'} x_{-\beta_{j_k}}(g^{(j_k)}) \dot u_{(\ell - 1)} g_\ell = \prod_{k=1}^{z'} x_{-\beta_{j_k}}(g^{(j_k)}) x_{-\beta_\ell}(g^{(\ell)}) \dot u = \prod_{k=1}^{z} x_{-\beta_{j_k}}(g^{(j_k)}) \dot u.$$

\textbf{Case 3:} $\ell \in J_{\mathbf{u}}^{-}$. In this case, $\dot u_{(\ell - 1)} g_\ell = \dot u_{(\ell - 1)} x_{\alpha_{i_\ell}} (g^{(\ell)}) {\dot s_{i_\ell}}^{-1}$. Then 
\begin{align*}
\dot u_{(\ell - 1)} x_{\alpha_{i_\ell}} (g^{(\ell)}) {\dot s_{i_\ell}}^{-1} &= \dot u_{(\ell - 1)} x_{\alpha_{i_\ell}} (g^{(\ell)}) s_{i_\ell}^{-1} (\dot u_{(\ell-1)} {\dot s_{i_\ell}}^{-1})^{-1} (\dot u_{(\ell-1)} {\dot s_{i_\ell}}^{-1}) \\
& = x_{u_{(\ell - 1)}(\alpha_{i_\ell})}(g^{(\ell)}) \dot u_{(\ell-1)} {\dot s_{i_\ell}}^{-1} \\
& = x_{-\beta_k}(g^{(k)}) \dot u,
\end{align*}
where the last equality follows from $\beta_k\!=\!-u_{(k-1)}(\alpha_{i_k})$ when $k\!\in\! J_{\mathbf{u}}^-$ and the fact that $u = u_{(\ell)} = u_{(\ell-1)} s_{i_\ell} < u_{(\ell-1)}$ implies $\dot u_{(\ell)} = \dot u_{(\ell-1)} {\dot s_{i_\ell}}^{-1}$~\cite[Proof of Proposition 5.2]{marsh-rietsch}. In addition, $J_{\mathbf{u}}^{\circ} \cup J_{\mathbf{u}}^{-} = J_{\mathbf{u'}}^{\circ} \cup J_{\mathbf{u'}}^{-} \cup \{ \ell \}$ and $z=z'+1$. Finally, by \eqref{eq:inductiveg}
$$g=\displaystyle \prod_{k=1}^{z'} x_{-\beta_{j_k}}(g^{(j_k)}) \dot u_{(\ell - 1)} g_\ell = \prod_{k=1}^{z'} x_{-\beta_{j_k}}(g^{(j_k)}) x_{-\beta_\ell}(g^{(\ell)}) \dot u =\prod_{k=1}^{z} x_{-\beta_{j_k}}(g^{(j_k)}) \dot u.$$

As all three cases yield the desired formula for $g$ our proof is complete.
\end{proof}

\begin{theorem}\label{thm:torus-dimension-richardson}
Fix $u\leq v$ and fix a reduced word $\mathbf{v}$ of $v$. Let $\mathbf{u}$ be a distinguished subword of $\mathbf{v}$ for $u$. For a general point $x\in D_{\mathbf{u},\mathbf{v}}$ The span of the weights of the torus action on the orbit $Tx$ equals $\TD(\mathbf{u},\mathbf{v})$. And if $x\in D_{\mathbf{u},\mathbf{v}}$ is not general, then the torus weights on $Tx$ are contained in $\TD(\mathbf{u},\mathbf{v})$. Moreover, we have \[\TD(\mathbf{u},\mathbf{v})\subset \TD(\mathbf{u}_+,\mathbf{v})=\AD(u,v).\]
\end{theorem}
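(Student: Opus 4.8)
The argument naturally divides into a computation identifying the $T$-action on a single Deodhar component and an induction comparing $\TD(\mathbf{u},\mathbf{v})$ with $\AD(u,v)$.

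\textbf{Step 1: the torus action on $D_{\mathbf{u},\mathbf{v}}$.} By Lemma~\ref{lemma:DeodharcomponentUnipotentSubgroup}, every $x\in D_{\mathbf{u},\mathbf{v}}$ is represented in $G_{\mathbf{u},\mathbf{v}}$ by its standard form $g=\prod_{k=1}^{z}x_{-\beta_{j_k}}(g^{(j_k)})\,\dot u$, where $\{j_1<\cdots<j_z\}=J_{\mathbf{u}}^{\circ}\cup J_{\mathbf{u}}^-$. For $t\in T$, applying \eqref{eq:tactionrootsubgroup} repeatedly slides $t$ past the root-group factors, replacing each parameter $g^{(j_k)}$ by $(-\beta_{j_k})(t)\,g^{(j_k)}$, and then $t\dot u B=\dot u(\dot u^{-1}t\dot u)B=\dot u B$ since $\dot u$ normalizes $T$; hence $tx$ is the point of $D_{\mathbf{u},\mathbf{v}}$ obtained by this rescaling. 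In other words, under $D_{\mathbf{u},\mathbf{v}}\cong(\mathbb{C}^*)^{J_{\mathbf{u}}^{\circ}}\times\mathbb{C}^{J_{\mathbf{u}}^-}$ the torus acts by scaling the coordinate indexed by $j_k$ through the character $-\beta_{j_k}$. Writing $S=\{k: g^{(j_k)}(x)\neq 0\}$, the orbit $Tx$ is a coset of the image of $T$ in $(\mathbb{C}^*)^{S}$ under these characters, so the $\mathbb{R}$-span of the weights of $T$ on $Tx$ equals $\mathrm{span}_{\mathbb{R}}\{\beta_{j_k}:k\in S\}$. As the parameters indexed by $J_{\mathbf{u}}^{\circ}$ are never zero, $x$ is general precisely when $S=\{1,\dots,z\}$, in which case this span is $\TD(\mathbf{u},\mathbf{v})$; in general $S$ is a subset, so the span lies in $\TD(\mathbf{u},\mathbf{v})$. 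This settles the first two assertions.

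\textbf{Step 2: $\TD(\mathbf{u},\mathbf{v})\subseteq\AD(u,v)$, with equality for $\mathbf{u}_+$.} I would prove these two statements simultaneously by induction on $\ell=\ell(v)$; the case $\ell=0$ is trivial. For the step put $s=s_{i_\ell}\in\mathcal{D}_R(v)$, $v'=vs$, $\mathbf{v}'=s_{i_1}\cdots s_{i_{\ell-1}}$ (a reduced word for $v'$), and let $\mathbf{u}'$ be $\mathbf{u}$ truncated to its first $\ell-1$ letters, a distinguished subword of $\mathbf{v}'$ (automatically $\leq v'$). The distinguished condition at the last position forces one of three configurations. If $s\in\mathcal{D}_R(u)$, then $\mathbf{u}$ must use $s$ at position $\ell$ with $\ell\in J_{\mathbf{u}}^+$, so $\mathbf{u}'$ evaluates to $us$ and $\TD(\mathbf{u},\mathbf{v})=\TD(\mathbf{u}',\mathbf{v}')$; by induction this lies in, and for $\mathbf{u}=\mathbf{u}_+$ equals, $\AD(us,v')$, which coincides with $\AD(u,v)$ by the first case of Proposition~\ref{prop:AD-deodhar-recursion}. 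If $s\notin\mathcal{D}_R(u)$, then either position $\ell$ is omitted (so $\ell\in J_{\mathbf{u}}^{\circ}$ and $\mathbf{u}'$ evaluates to $u$) or it uses $s$ (so $\ell\in J_{\mathbf{u}}^-$ and $\mathbf{u}'$ evaluates to $us$); in both $\beta_\ell=\wt(u,us)$, hence $\TD(\mathbf{u},\mathbf{v})=\TD(\mathbf{u}',\mathbf{v}')+\mathbb{R}\cdot\wt(u,us)$. By induction $\TD(\mathbf{u}',\mathbf{v}')\subseteq\AD(u,v')$ (in the $J_{\mathbf{u}}^-$ subcase also using $\AD(us,v')\subseteq\AD(u,v')$, valid since $[us,v']\subseteq[u,v']$), and $\AD(u,v)=\AD(u,v')+\mathbb{R}\cdot\wt(u,us)$ by the second case of Proposition~\ref{prop:AD-deodhar-recursion}, so $\TD(\mathbf{u},\mathbf{v})\subseteq\AD(u,v)$; for $\mathbf{u}=\mathbf{u}_+$ we fall into the omitted-letter subcase, $\mathbf{u}'$ is then the positive distinguished subword of $u$ with respect to $\mathbf{v}'$ (a truncation of a positive distinguished subword is again positive distinguished, hence is the unique one), and the inclusion becomes an equality. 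Chaining, $\TD(\mathbf{u},\mathbf{v})\subseteq\AD(u,v)=\TD(\mathbf{u}_+,\mathbf{v})$, which is the last assertion.

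\textbf{Main obstacle.} Step~1 is routine once Lemma~\ref{lemma:DeodharcomponentUnipotentSubgroup} is available; the one point needing care is making the phrase ``span of the weights of the $T$-action on $Tx$'' precise and noting it depends only on the $\mathbb{R}$-linear span of the $\beta_{j_k}$, not on the integral structure of the relevant lattice. The genuine work is the bookkeeping in Step~2: correctly determining in each configuration which of $J_{\mathbf{u}}^{+},J_{\mathbf{u}}^{\circ},J_{\mathbf{u}}^-$ the last position occupies (this is governed by whether $s\in\mathcal{D}_R(u)$ and by which letter $\mathbf{u}$ places there), selecting the matching branch of Proposition~\ref{prop:AD-deodhar-recursion}, and ensuring the equality statement for $\mathbf{u}_+$ survives the induction rather than degrading to an inclusion.
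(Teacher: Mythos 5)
Your proposal is correct and follows essentially the same route as the paper: Step 1 matches the paper's use of Lemma~\ref{lemma:DeodharcomponentUnipotentSubgroup} together with \eqref{eq:tactionrootsubgroup} to read off the weights from the standard form, and Step 2 is the same induction on $\ell(v)$ that truncates the last letter and matches the three configurations of the final position against the two cases of Proposition~\ref{prop:AD-deodhar-recursion} (your cases (a), (b), (c) are the paper's Case 1, Subcase 2.1, Subcase 2.2). The only cosmetic difference is that you make explicit the observations that the distinguished condition rules out omitting the last letter when $s\in\mathcal{D}_R(u)$ and that truncation preserves positivity, both of which the paper also uses.
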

\begin{proof}
The first part of the proof utilizes similar arguments as Theorem 4.7 of \cite{mahir-toric}, who analyzed the torus weights for positive distinguished subwords. 

By Lemma~\ref{lemma:DeodharcomponentUnipotentSubgroup}, the $T$-action on $D_{\mathbf{u},\mathbf{v}}$ is given by the $T$-action on $\prod_{k=1}^{z} U_{j_k} \dot u B$ for $J_{\mathbf{u}}^{\circ}\cup J_{\mathbf{u}}^- = \{ j_1 < j_2 < \cdots < j_z \}$. For $h \in T$ and $g$ the standard form of $x \in D_{\mathbf{u},\mathbf{v}}$, $$h \cdot g = h \cdot \prod_{k=1}^{z} x_{-\beta_{j_k}}(g^{(j_k)}) \dot u B = \prod_{k=1}^{z} x_{-\beta_{j_k}}(-\beta_k(h) g^{(j_k)}) \dot u B,$$ where the final equality follows from \eqref{eq:tactionrootsubgroup} and the fact that $\dot wB$ is a $T$-fixed point for any $w \in W$. If $x$ is general then none of the $g^{(j_k)}$ are $0$. Hence, the span of the weights of the action of $T$ on $Tx=Tg$ is $\mathrm{span}_{\mathbb{R}}\{-\beta_{j_k}\:|\: 1 \leq k \leq z \} = \mathrm{span}_{\mathbb{R}}\{\beta_k\:|\: k\in J_{\mathbf{u}}^{\circ}\cup J_{\mathbf{u}}^-\} = \TD(\mathbf{u},\mathbf{v})$.

It is now straightforward from the above arguments that if $x$ is not general, then the torus weights are contained in $\TD(\mathbf{u},\mathbf{v})$.

For the second part of the proof, we use induction on $\ell(v)$ to obtain a recurrence and relate it to recurrence of $\AD(u,v)$ (Proposition~\ref{prop:AD-deodhar-recursion}). Let $\mathbf{v}=s_{i_1}\cdots s_{i_{\ell}}$ and write $s_i=s_{i_{\ell}}$ for simplicity, as we will mainly be discussing this right descent of $v$. Write $\mathbf{u}=q_{i_1}\cdots q_{i_{\ell}}$ where $q_{i_j}\in\{s_{i_j},1\}$. Let $\mathbf{v}'=s_{i_1}\cdots s_{i_{\ell-1}}$ and $\mathbf{u}'=q_{i_1}\cdots q_{i_{\ell-1}}$. Prefixes of a distinguished subword are also distinguished so $\mathbf{u}'$ is a distinguished subword of $\mathbf{v}'$. Let $\mathbf{v}'$ evaluate to $v'=vs_i<v$ and let $\mathbf{u}'$ evaluate to $u'\in\{us_i,u\}$.

\textbf{Case 1:} $us_i<u$. If $u'=u$, then by the definition of a distinguished subword (Definition~\ref{def:subexpressions}), we need to select $q_{i_{\ell}}=s_i$, contradicting $u=u'q_{i_{\ell}}$. Thus, $u'=us_i$, $q_{i_{\ell}}=s_i$ and $\ell\in J_{\mathbf{u}}^+$. The sequence of $\beta_k$'s (Definition~\ref{def:root-for-subword}) remains unchanged from $(\mathbf{u},\mathbf{v})$ to $(\mathbf{u}',\mathbf{v}')$ and $\mathbf{u}$ is positive if and only if $\mathbf{u}'$ is positive. In addition, $\AD(u,v)=\AD(us_i,vs_i)$ (Proposition~\ref{prop:AD-deodhar-recursion}). Thus, the desired statement follows from $\TD(\mathbf{u}',\mathbf{v}')\subset\TD(\mathbf{u}_+',\mathbf{v}')=\AD(us_i,vs_i)$ by induction. 

\textbf{Case 2:} $us_i>u$. Here, either $q_{i_{\ell}}=1$ or $q_{i_{\ell}}=s_i$ are possible for a distinguished subword $\mathbf{u}$. If we want $\mathbf{u}$ to be positive, then $q_{i_{\ell}}=1$. For both situations, $\beta_{\ell}=\wt(u,us_i)$ and therefore $\TD(\mathbf{u},\mathbf{v})=\TD(\mathbf{u}',\mathbf{v}')+\mathbb{R}\cdot\wt(u,us_i)$. We further divide into subcases.

\textbf{Subcase 2.1:} $q_{i_{\ell}}=1$, $\ell\in J_{\mathbf{u}}^{\circ}$. In this case, $\mathbf{u}$ is positive if and only if $\mathbf{u}'$ is positive. Thus, by induction hypothesis and the recurrence (Proposition~\ref{prop:AD-deodhar-recursion}), \[\TD(\mathbf{u}_+,\mathbf{v}_+)=\TD(\mathbf{u}_+',\mathbf{v}_+')+\mathbb{R}\beta_{\ell}=\AD(u,vs_i)+\mathbb{R}\beta_{\ell}=\AD(u,v).\]
Since $\TD(\mathbf{u}',\mathbf{v}')\subset\TD(\mathbf{u}_+',\mathbf{v}_+')$ by induction hypothesis, after adding $\mathbb{R}\beta_{\ell}$ on both sides, we have $\TD(\mathbf{u},\mathbf{v})\subset\TD(\mathbf{u}_+,\mathbf{v}_+)$ as desired.

\textbf{Subcase 2.2:} $q_{i_{\ell}}=s_i$, $\ell\in J_{\mathbf{u}}^{\circ}$. In this case, $\mathbf{u}$ is not positive and $\mathbf{u}'$ is a distinguished subword for $u'=us_i>u$. By induction hypothesis, $\TD(\mathbf{u}',\mathbf{v}')\subset \AD(us_i,vs_i)$. So \[\TD(\mathbf{u},\mathbf{v})=\TD(\mathbf{u}_+',\mathbf{v}_+')+\mathbb{R}\beta_{\ell}\subset\AD(us_i,vs_i)+\mathbb{R}\beta_{\ell}\subset\AD(u,vs_i)+\mathbb{R}\beta_{\ell}=\AD(u,v). \qed \] 
\let\qed\relax \end{proof}
\begin{corollary}
For $u\leq v$, the Bruhat interval $[u,v]$ is toric if and only if the Richardson variety $\mathcal{R}_{u,v}$ is toric.
\end{corollary}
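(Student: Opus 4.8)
The plan is to deduce the corollary directly from Theorem~\ref{thm:torus-dimension-richardson} together with the Deodhar decomposition $\R_{u,v}^{\circ}=\bigsqcup_{\mathbf{u}}D_{\mathbf{u},\mathbf{v}}$, using that $\R_{u,v}$ is an irreducible, normal $T$-variety of dimension $\ell(v)-\ell(u)$; in particular, being normal, $\R_{u,v}$ is toric if and only if it carries a dense $T$-orbit. The bridge between the combinatorial and the geometric notions of toricity is the following: for $x$ in a Deodhar component the orbit $Tx$ is a translate of a subtorus whose dimension equals the dimension of the $\mathbb{R}$-span of the weights of the $T$-action on $Tx$ (this is visible from the explicit parametrization of that action in the proof of Theorem~\ref{thm:torus-dimension-richardson}), and that span is exactly what Theorem~\ref{thm:torus-dimension-richardson} controls.

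\textbf{Forward direction.} Suppose $[u,v]$ is toric, i.e.\ $\ad(u,v)=\ell(v)-\ell(u)$. Fix a reduced word $\mathbf{v}$ of $v$ and let $\mathbf{u}_{+}$ be the positive distinguished subword of $u$, so that $\dim D_{\mathbf{u}_{+},\mathbf{v}}=\dim\R_{u,v}=\ell(v)-\ell(u)$. Since $\mathbf{u}_{+}$ is positive, $J_{\mathbf{u}_{+}}^{-}=\emptyset$, so every point of $D_{\mathbf{u}_{+},\mathbf{v}}$ is general. Pick any $x\in D_{\mathbf{u}_{+},\mathbf{v}}$. By Theorem~\ref{thm:torus-dimension-richardson} the weights of $T$ on $Tx$ span $\TD(\mathbf{u}_{+},\mathbf{v})=\AD(u,v)$, hence $\dim Tx=\ad(u,v)=\ell(v)-\ell(u)=\dim\R_{u,v}$. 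An irreducible locally closed subset of the irreducible variety $\R_{u,v}$ having full dimension is dense, so $Tx$ is a dense $T$-orbit; as $\R_{u,v}$ is normal, it is toric.

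\textbf{Converse.} Suppose $\R_{u,v}$ is toric and let $Tx$ be its dense $T$-orbit. A dense orbit is open (a locally closed subset is open in its closure, which here is all of $\R_{u,v}$), while $\R_{u,v}^{\circ}$ is open, dense and $T$-stable in $\R_{u,v}$; hence $Tx\cap\R_{u,v}^{\circ}$ is a nonempty $T$-stable subset of the single $T$-orbit $Tx$, so it equals $Tx$, and therefore $x$ lies in some Deodhar component $D_{\mathbf{u},\mathbf{v}}$. By Theorem~\ref{thm:torus-dimension-richardson} the weights of $T$ on $Tx$ lie in $\TD(\mathbf{u},\mathbf{v})\subseteq\TD(\mathbf{u}_{+},\mathbf{v})=\AD(u,v)$, so $\dim Tx\le\ad(u,v)$. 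But $\dim Tx=\dim\R_{u,v}=\ell(v)-\ell(u)$, and $\ad(u,v)\le\ell(v)-\ell(u)$ always; thus $\ad(u,v)=\ell(v)-\ell(u)$, i.e.\ $[u,v]$ is toric.

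Given Theorem~\ref{thm:torus-dimension-richardson}, this is a short deduction and I do not expect a serious obstacle; the only points needing attention are to work in the top-dimensional Deodhar component $D_{\mathbf{u}_{+},\mathbf{v}}$ (where positivity of $\mathbf{u}_{+}$ forces all points to be general) and to match the geometric invariant $\dim Tx$ with the combinatorial invariant $\ad(u,v)$ via the weight span. Finally, this corollary, applied to each subinterval $[w,v]$ and combined with the dual form of Proposition~\ref{prop:toric-ad}, at once gives the ``moreover'' assertion of Theorem~\ref{thm:torus-complexity}, namely $\ad(u,v)=\max_{u\le w\le v,\ \R_{w,v}\text{ toric}}\bigl(\ell(v)-\ell(w)\bigr)$.
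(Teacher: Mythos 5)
Your proof is correct and follows essentially the same route as the paper: both deduce the equivalence from Theorem~\ref{thm:torus-dimension-richardson} together with the maximality of $\TD(\mathbf{u}_+,\mathbf{v})=\AD(u,v)$ among the $\TD(\mathbf{u},\mathbf{v})$, identifying the dimension of a torus orbit in a Deodhar component with the dimension of its weight span. Your write-up merely makes explicit some steps (generality of points in the positive component, the dense orbit meeting the open Richardson cell) that the paper leaves implicit.
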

\begin{proof}
By Theorem~\ref{thm:torus-dimension-richardson}, the Richardson variety $\mathcal{R}_{u,v}$ is toric if $\dim\TD(\mathbf{u},\mathbf{v})=\ell(v)-\ell(u)$, for a fixed reduced word $\mathbf{v}$ of $v$ and some distinguished subword $\mathbf{u}$. This condition is equivalent to $\dim\TD(\mathbf{u}_+,\mathbf{v})=\ell(v)-\ell(u)$ as $\TD(\mathbf{u}_+,\mathbf{v})$ is maximal, which is equivalent to $\ad(u,v)=\ell(v)-\ell(u)$, i.e. $[u,v]$ is toric (c.f. Definition~\ref{def:toric-interval}).
\end{proof}

We are now ready to prove the main theorem of this section, Theorem~\ref{thm:torus-complexity}.
\begin{proof}[Proof of Theorem~\ref{thm:torus-complexity}]
By Theorem~\ref{thm:torus-dimension-richardson}, the maximum dimension of a $T$-orbit in the Richardson variety $\mathcal{R}_{u,v}$ is $\ad(u,v)$. By Definition~\ref{def:complexity}, the $T$-complexity of $\mathcal{R}_{u,v}$ is \[c_T(\mathcal{R}_{u,v})=\ell(v)-\ell(u)-\ad(u,v)\] as desired. The second part of the theorem is Proposition~\ref{prop:toric-ad}.
\end{proof}

\begin{proof}[Proof of Corollary~\ref{cor:torus-complexity-Schubert}]
The subword property (Proposition~\ref{prop:subword-property}) and Lemma~\ref{lem:AD-spanned-by-covers} imply that $\ad(\mathrm{id},w)=\supp(w)$. The statement on the complexity of the Schubert variety $X_w$ now follows from Theorem~\ref{thm:torus-complexity}.
\end{proof}

\section{The Levi complexity of Schubert varieties}\label{sec:levi-complexity}
\subsection{An equivariant isomorphism}
In \cite{GHY24}, the authors of this work and A.~Yong showed that if $w = w_0(I) d$ is length additive for an $I \subseteq \Delta$, then $X^{\circ}_w$ contains a distinguished subspace that is $T$-equivariantly isomorphic to $X^{\circ}_d$. The content in the first portion of this section is similar in broad form to our argument from \cite{GHY24}, but our results apply to any $w \in W$ and $I \subseteq \Delta$. This leads to a codimension preserving bijection between the $B_{L_I}$-orbits of $X_{w}^{\circ}$ and the $T$-orbits of $X_{\prescript{I}{}{w}}^{\circ}$.

Let $X$ be a $T$-variety with action denoted by $\bigdot$. For each $u \in W$ we define a $T$-action $\bigdot_u$ on $X$ by $t \bigdot_u x = utu^{-1} \bigdot x$ for all $x \in X$ and $t \in T$. The action $\bigdot_u$ does not depend on the coset representative chosen for $u$ and so is well-defined by \cite[Lemma 3.1]{GHY24}. Henceforth, $\bigdot$ will always denote the usual action of $T$ on $G/B$ by left multiplication. 

For a positive integer $n$ let $\mathbb{A}^n$ be the \emph{affine n-space}. We use the following well-known fact~\cite[\S 14.12, \S 14.4]{B91} repeatedly in what follows. For all $v \in W$, 
\begin{equation} 
\label{eq:SchubertcellAffine}
    X_{v}^{\circ} = U_v v B \cong \mathbb{A}^{\ell(v)} \text{(as varieties)}.
\end{equation}

From now on, to simplify our notation let the left parabolic decomposition of $w$ with respect to $I\subseteq S$ be $w=ad$ where $a\in W_I$ and $d\in \prescript{I}{}{W}$. Recall the definition of $V_d$ in Subsection~\ref{sub:root-subgroups}.
\begin{definition} 
The \emph{$I$-heart} of the Schubert variety $X_w$, denoted $\Heart_I(X_w)$, is the subvariety $V_d w B \subseteq U_w w B = X_w^{\circ}$.
\end{definition}

\begin{proposition}
\label{proposition:tStableHeartIso}
$\Heart_I(X_w)$ is $T$-stable for the action $\bigdot$. Additionally, $\Heart_I(X_w)$ is $T$-equivariantly isomorphic to $X_d^{\circ}$ equipped with the $T$-action $\bigdot_{a^{-1}}$ under the map $\phi: \Heart_I(X_w) \rightarrow X_{d}^{\circ}$ given by $hB \mapsto a^{-1}hB$. 
\end{proposition}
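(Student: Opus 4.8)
The plan is to realize $\phi$ as the restriction to $\Heart_I(X_w)=V_dwB$ of the left translation $\lambda_{a^{-1}}$ by a fixed lift of $a^{-1}$, which is an automorphism of $G/B$ as a variety, and then to check three things by direct computation: $\Heart_I(X_w)$ is $\bigdot$-stable, $\lambda_{a^{-1}}$ carries it isomorphically onto $X_d^\circ$, and $\phi$ intertwines $\bigdot$ on the source with $\bigdot_{a^{-1}}$ on the target. The only external input needed is Lemma~\ref{lemma:directlyspannedUw}: $V_d=aU_da^{-1}$ is a closed subgroup of $U_w$ normalized by $T$, so $\Heart_I(X_w)\subseteq U_wwB=X_w^\circ$ is a well-defined subvariety, and since $\lambda_{a^{-1}}$ is a variety automorphism, $\phi$ is automatically an isomorphism onto its image; hence the real content is to identify that image and to verify equivariance.

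For $\bigdot$-stability, I would take $t\in T$ and $v\in V_d$: since $w$ lies in $N_G(T)$, we have $tw=w(w^{-1}tw)\in wB$, hence $twB=wB$, and since $T$ normalizes $V_d$, we have $tvt^{-1}\in V_d$, so $t\bigdot(vwB)=(tvt^{-1})(twB)=(tvt^{-1})wB\in V_dwB$. For the image, write a point of $\Heart_I(X_w)$ as $vwB$ with $v=aua^{-1}$, $u\in U_d$ (using $w=ad$ and $V_d=aU_da^{-1}$); then
\[\phi(vwB)=a^{-1}(aua^{-1})(ad)B=udB,\]
and as $u$ ranges over $U_d$ these exhaust $U_ddB=X_d^\circ$ by \eqref{eq:SchubertcellAffine}, so $\phi\colon\Heart_I(X_w)\xrightarrow{\sim}X_d^\circ$. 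For equivariance, $\phi(t\bigdot hB)=a^{-1}thB$ while $t\bigdot_{a^{-1}}\phi(hB)=(a^{-1}ta)\bigdot(a^{-1}hB)=(a^{-1}ta)(a^{-1}h)B=a^{-1}thB$, using $a^{-1}ta\in T$ and the definition $t\bigdot_u x=utu^{-1}\bigdot x$; the two agree. I would also note that $X_d^\circ$ is stable under $\bigdot_{a^{-1}}$ (being $\bigdot$-stable, and $\bigdot_{a^{-1}}$ differs from $\bigdot$ only by precomposing with the automorphism $t\mapsto a^{-1}ta$ of $T$) and that $\bigdot_{a^{-1}}$ is a genuine action by \cite[Lemma 3.1]{GHY24}.

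There is no deep obstacle here: the argument is essentially formal once Lemma~\ref{lemma:directlyspannedUw} is in hand. The one thing requiring care is bookkeeping with coset representatives — the formula $hB\mapsto a^{-1}hB$ depends on the chosen lift of $a$, so I would fix lifts of $a$, $d$, $w$ in $N_G(T)$ with $w=ad$ holding on the nose, so that $a^{-1}vw=ud$ is an exact identity rather than one modulo $T$. It is also worth recording, for the alternative description $vwB\mapsto udB$ of $\phi$, that each point of $\Heart_I(X_w)$ has a \emph{unique} representative of the form $vwB$ with $v\in V_d$, which follows from $U_w\cap wBw^{-1}=\{e\}$ (because $U_w\subseteq wU^-w^{-1}$ and $B\cap U^-=\{e\}$); describing $\phi$ via $\lambda_{a^{-1}}$ sidesteps this point, but it makes the bijectivity transparent.
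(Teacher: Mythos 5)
Your proposal is correct and follows essentially the same route as the paper: the same stability computation using that $T$ normalizes $V_d$ and fixes $wB$, the same identification $a^{-1}V_dwB = U_d dB = X_d^{\circ}$ via Lemma~\ref{lemma:directlyspannedUw}, and the same one-line equivariance check. The one (minor but genuine) improvement is your justification that $\phi$ is an isomorphism of varieties: you observe it is the restriction of the global automorphism $\lambda_{a^{-1}}$ of $G/B$, which is immediate, whereas the paper argues that $\phi$ is a bijective morphism onto the smooth (hence normal) variety $X_d^{\circ}$ and invokes Zariski's main theorem; your version avoids that appeal entirely. Your closing remarks on fixing lifts in $N_G(T)$ and on uniqueness of the representative $vwB$ (via $U_w \cap wBw^{-1} = \{e\}$) are correct and address bookkeeping the paper leaves implicit.
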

\begin{proof}
Let $t \in T$. Then
\[
t \bigdot \Heart_I(X_w) = t V_d w B = t (t^{-1}V_d t) w B = V_d w B = \Heart_I(X_w),
\]
where the second equality follows from the fact that $V_d$ is normalized by $T$, and the third inequality from $wB$ being a $T$-fixed point.  

The image of the map $\phi$ is $\phi(\Heart_I(X_w))= a^{-1} V_d w B = U_d d B = X_d^{\circ}$, where the second equality is the definition of $V_d$ and the third equality is \eqref{eq:SchubertcellAffine}. Thus $\phi$ is well-defined and surjective. As $\phi$ is left multiplication by $a^{-1}$, the map is also injective. Since \eqref{eq:SchubertcellAffine} implies that $X_d^{\circ}$ is smooth, and hence normal, the preceding remarks along with Zariski's main lemma imply that $\phi$ is an isomorphism of varieties. Now, let $t \in T$ and $hB \in \Heart_I(X_w)$. We have
\[
\phi(t \bigdot hB) = a^{-1}thB = a^{-1}ta a^{-1} hB = t \bigdot_{a^{-1}} \phi(hB),
\]
establishing the $T$-equivariance of $\phi$.
\end{proof}

Define $B_{L_I} := L_I \cap B$ and $U_{L_I}$ to be the unipotent radical of $B_{L_I}$. Then $B_{L_I} = T \ltimes U_{L_I}$ is a Borel subgroup of $L_I$, and $U_{L_I} = B_{L_I} \cap U = \prod_{\alpha \in \Phi^+(I)} U_{\alpha}$ \cite[\S 14]{B91}.

Set $U_{\bar{a}} = \prod_{\alpha \in \phi^+(I) \setminus {\mathcal I}(a)} U_{\alpha}$. If $\phi^+(I) \setminus {\mathcal I}(a) = \emptyset$, then we define $U_{\bar{a}}$ to be the trivial subgroup of $G$.  Then $U_{\bar{a}}$ is a closed subgroup of $U$ normalized by $T$ and
\[B_{L_I} = T \ltimes U_{L_I} =  T \ltimes \prod_{\alpha \in \phi^+(I)} U_{\alpha} = T \ltimes (U_{\bar{a}} U_a).\]

\begin{lemma}
\label{lemma:equivalentCharUbara}
The followings are equivalent:
\begin{enumerate}
\item $U_{\bar{a}}$ is the trivial subgroup of $G$.
\item $\phi^+(I) \setminus {\mathcal I}(a) = \emptyset$.
\item $L_I$ acts on $X_w$.
\item $I \subseteq \mathcal{D}_L(w)$.
\item $a = w_0(I)$.
\end{enumerate}
\end{lemma}
\begin{proof}
One direction of the equivalence between (1) and (2) is by construction and the other follows from the definition of root subgroups.

The equivalence of (3) and (4) is \cite[Lemma 8.2.3]{BL00}. If $I\subseteq {\mathcal D}_L(w)$, then ${\mathcal I}(w)$ contains all the positive roots in the root subsystem generated by $I$. Thus $w=w_0(I)d$ is a length-additive expression for some $d\in W$ by \cite[Proposition 3.1.3]{Bjorner.Brenti}. That is, $a = {_Iw} = w_0(I)$ in the left parabolic decomposition of $w$. For the other direction, if $a = w_0(I)$, then $I \subseteq \mathcal{D}_L(w)$. Hence, (4) and (5) are equivalent.

Finally, the equivalence of (2) and (5) is immediate.
\end{proof}

\begin{lemma}
\label{lemma:uBarStableHeart}
$\Heart_I(X_w)$ is $U_{\bar{a}}$-stable for the usual action.
\end{lemma}
\begin{proof}
We have that
\begin{align*}
    U_{\bar{a}} \Heart_I(X_w) &= U_{\bar{a}} a U_d a^{-1} w B = a a^{-1} U_{\bar{a}} a U_d d B = a \prod_{\alpha \in \phi^+(I) \setminus {\mathcal I}(a)} U_{a^{-1}(\alpha)} U_d d B \\
                                                             &= a U_d d B = a U_d a^{-1} a d B = \Heart_I(X_w)
\end{align*}
where the third equality follows by \cite[Part II, 1.4(5)]{J03}, and the fourth equality is \eqref{eq:SchubertcellAffine} combined with the fact that $\alpha \notin {\mathcal I}(a)$ implies $U_{a^{-1}(\alpha)} \leq B$.
\end{proof}

\begin{lemma}
\label{lemma:orbitDynamicsRevisited}
Let $x \in X_{w}^{\circ} \setminus \Heart_I(X_w)$ and $h_1,h_2 \in \Heart_I(X_w)$.
\begin{enumerate}
\item $u_2h_1 \notin \Heart_I(X_w)$ for all $u_2 \in U_{a}$ with $u_2 \neq e$.
\item $tu_1x \notin \Heart_I(X_w)$ for all $t \in T$ and $u_1 \in U_{\bar{a}}$.
\item Let $b = t u_1 u_2 \in B_{L_I}$ with $t \in T$, $u_1 \in U_{\bar{a}}$, and $u_2 \in U_a$. If $bh_1=h_2$, then $u_2 = e$ and $t u_1 h_1 = h_2$.
\end{enumerate}
\end{lemma}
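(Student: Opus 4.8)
\textbf{Proof plan for Lemma~\ref{lemma:orbitDynamicsRevisited}.}

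The plan is to analyze all three parts through the combinatorial structure of $X_w^{\circ} = U_w w B = U_a V_d w B$ coming from Lemma~\ref{lemma:directlyspannedUw}, writing any $h \in \Heart_I(X_w)$ uniquely as $h = v\, w B$ with $v \in V_d$, and any $x \in X_w^{\circ}$ uniquely as $x = u v\, w B$ with $u \in U_a$, $v \in V_d$; membership in $\Heart_I(X_w)$ is precisely the vanishing of the $U_a$-component. For part (1), multiplying $h_1 = v_1 w B$ on the left by $u \in U_a$ gives $u v_1 w B$, and since the product morphism $U_a \times V_d \to U_w$ is a bijection, the $U_a$-component of $u v_1$ is $u \neq e$; hence $u h_1 \notin \Heart_I(X_w)$. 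For part (2), I would use \eqref{eq:tactionrootsubgroup}: conjugating $t$ past the root subgroup factors of $u v \in U_w$ (with $u \in U_a$ nontrivial, which is the hypothesis $x \notin \Heart_I(X_w)$) only rescales each coordinate by a character of $t$, so the $U_a$-component of $t x$ (after absorbing $t$ into $B$ via $wB$ being a $T$-fixed point) is obtained from that of $x$ by rescaling each simple-root-subgroup coordinate; rescaling cannot make a nonzero coordinate vanish, so $tx \notin \Heart_I(X_w)$.

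For part (3), I would first dispatch $u_2 \in U_{\bar a}$ using Lemma~\ref{lemma:ubaraIsotrpy} (or rather Corollary~\ref{corollary:stabBLI}): since $U_{\bar a}$ fixes every point of $X_w^{\circ}$, $b h_1 = t u_1 u_2 h_1 = t u_1 h_1$, so $u_2$ plays no role. Now write $h_1 = v_1 w B$. We need to understand $t u_1 v_1 w B$. Push $t$ to the right past $u_1 v_1 \in U_w$ using \eqref{eq:tactionrootsubgroup}: $t (u_1 v_1) = (u_1' v_1') t$ where $u_1' \in U_a$, $v_1' \in V_d$ are obtained by rescaling coordinates (using that $U_a$ and $V_d$ are $T$-stable, the latter by Lemma~\ref{lemma:directlyspannedUw}), and the rescaling sends $u_1 = e$ to $u_1' = e$ and sends a nontrivial $u_1$ to a nontrivial $u_1'$. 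Since $t w B = w B$, we get $b h_1 = u_1' v_1' w B$, whose $U_a$-component is $u_1'$. But $h_2 = b h_1 \in \Heart_I(X_w)$ forces the $U_a$-component to be trivial, so $u_1' = e$, hence $u_1 = e$. Then $b h_1 = t v_1 w B = t \bigdot h_1$, i.e. $t h_1 = h_2$ (interpreting left multiplication as the $T$-action), which is what we want.

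The main obstacle I anticipate is being careful about the bookkeeping in part (3): one must justify that conjugating $t$ past a product of root subgroup elements, some lying in $U_a$ and some in $V_d = a U_d a^{-1}$, respects the direct-span decomposition $U_w = U_a V_d = V_d U_a$ — i.e. that the $T$-action is ``diagonal'' with respect to this factorization. This follows because both $U_a$ and $V_d$ are normalized by $T$ (Lemma~\ref{lemma:directlyspannedUw}) and each is a product of genuine root subgroups on which $T$ acts by \eqref{eq:tactionrootsubgroup}, so conjugation by $t$ preserves each factor and merely rescales the parametrizing scalars, in particular preserving the property of a factor being the identity. Parts (1) and (2) are then essentially immediate corollaries of the same principle. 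I would also remark that part (2) is a special case of part (3) with $u_1 = u_2 = e$, except that part (2) allows $x \notin \Heart_I(X_w)$ whereas part (3) starts inside the heart, so they are genuinely separate statements and I would keep the proofs distinct.
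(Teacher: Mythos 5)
Your proposal is correct and follows essentially the same route as the paper: the unique factorization $U_w = U_a V_d$ from Lemma~\ref{lemma:directlyspannedUw}, the identification of $\Heart_I(X_w)$ with the points of $X_w^{\circ}$ having trivial $U_a$-component via \eqref{eq:SchubertcellAffine}, and Lemma~\ref{lemma:ubaraIsotrpy} to absorb $u_2$. The only differences are presentational: the paper dispatches part (2) in one line from the $T$-stability of the heart (Proposition~\ref{proposition:tStableHeartIso}) and proves part (3) by contradiction as an immediate corollary of (1) and (2), whereas you redo the $T$-conjugation bookkeeping explicitly; both versions are valid.
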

\begin{proof}
(1) Since $h_1 \in \Heart_I(X_w)$, $h_1 = v w B$ for some $v \in V_d$. By Lemma~\ref{lemma:directlyspannedUw}, $u_2v \notin V_d$. Thus $u_2 v w B \in  X_w^{\circ} \setminus \Heart_I(X_w)$ by \eqref{eq:SchubertcellAffine}.

(2) The $T$ and $U_{\bar{a}}$ stability of $\Heart_I(X_w)$, proved in Proposition ~\ref{proposition:tStableHeartIso} and Lemma~\ref{lemma:uBarStableHeart} respectively, imply $tu_1x \notin \Heart_I(X_w)$.

(3) Suppose, for contradiction that $u_2 \neq e$. Then (1) implies that $u_2 h_1 \notin \Heart_I(X_w)$. And thus, (2) implies $t u_1  u_2 h_1 \notin \Heart_I(X_w)$. This contradicts our assumption that $t u_1 u_2 h_1 = h_2$. Thus $u_2 = e$ and $t u_1 h_1 = b h_1 = h_2$.
\end{proof}

\begin{lemma}
\label{lemma:orbitStabilizer}
  Let $x \in X_w^{\circ}$. Then $(B_{L_I} \cdot x) \cap \Heart_I(X_w) \neq \emptyset$. 
\end{lemma}
\begin{proof}
Let $x \in X_w^{\circ}$. By \eqref{eq:SchubertcellAffine}, $x = uwB$ for $u \in U_w$. Thus, by Lemma~\ref{lemma:directlyspannedUw}, $x = u_1 v w B$ for $u_1 \in U_a, v \in V_d$. Since $u_1^{-1} \in U_a \leq B_{L_I}$, we have $v w B \in B_{L_I} \cdot x$ and $vwB \in \Heart_I(X_w)$ by definition. Thus $(B_{L_I} \cdot x) \cap \Heart_I(X_w) \neq \emptyset$.
\end{proof}

We now define our surjection from $T$-orbits in $\Heart_I(X_w)$ to $B_{L_I}$-orbits in $X_w^{\circ}$, which is a codimension preserving bijection in the case where $L_I$ acts on the Schubert variety .

\begin{theorem}
\label{theorem:codimensionPreservingOrbitBijection}
The map $\beta: \mathcal{O}_T(\Heart_I(X_w)) \rightarrow \mathcal{O}_{B_{L_I}}(X_w^{\circ})$ given by $\Theta\mapsto B_{L_I} x$
where $x$ is any point in $\Theta$ is a surjection. If $L_I$ acts on $X_w$, then $\beta$ is a codimension preserving bijection. That is, \[ \dim(X_w^{\circ}) - \dim(\beta(\Theta)) = \dim(\Heart_I(X_w)) - \dim(\Theta), \]
for every $\Theta \in \mathcal{O}_T(\Heart_I(X_w))$.
\end{theorem}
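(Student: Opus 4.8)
The plan is to check, in turn, that $\beta$ is well-defined, surjective, injective, and codimension-preserving, leaning on the structural results just proved. For well-definedness, I would note that any two points of a $T$-orbit $\Theta$ differ by an element of $T \le B_{L_I}$, so $B_{L_I}x$ is independent of the choice of $x \in \Theta$; since $B_{L_I} \le B$ and $X_w^{\circ}$ is a single $B$-orbit, $B_{L_I}x \subseteq X_w^{\circ}$, so $\beta(\Theta)$ is indeed an element of $\mathcal{O}_{B_{L_I}}(X_w^{\circ})$.

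For surjectivity, given a $B_{L_I}$-orbit $\mathcal{O}$ in $X_w^{\circ}$, Lemma~\ref{lemma:orbitStabilizer} supplies a point $h \in \mathcal{O} \cap \Heart_I(X_w)$; the $T$-orbit $Th$ lies in $\Heart_I(X_w)$ by the $T$-stability in Proposition~\ref{proposition:tStableHeartIso}, so $Th \in \mathcal{O}_T(\Heart_I(X_w))$ and $\beta(Th) = B_{L_I}h = \mathcal{O}$. For injectivity, I would suppose $\beta(\Theta_1) = \beta(\Theta_2)$ with chosen points $x_i \in \Theta_i \subseteq \Heart_I(X_w)$; then $x_2 = b x_1$ for some $b = t u_1 u_2 \in B_{L_I}$ with $t \in T$, $u_1 \in U_a$, $u_2 \in U_{\bar a}$, and Lemma~\ref{lemma:orbitDynamicsRevisited}(3) forces $u_1 = e$ and $t x_1 = x_2$, whence $\Theta_2 = T x_2 = T x_1 = \Theta_1$.

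For codimension preservation, I would fix $\Theta$ and a point $x \in \Theta$. Since $x$ itself lies in $(B_{L_I} x) \cap \Heart_I(X_w)$, Lemma~\ref{lemma:orbitStabilizer} applies with reference point $h = x$ and gives $(B_{L_I})_x = T_x \ltimes U_{\bar a}$. Then the orbit--stabilizer dimension formula, together with $\dim B_{L_I} = \dim T + |\Phi^{+}(I)|$ and $\dim U_{\bar a} = |\Phi^{+}(I)| - \ell(a)$, yields $\dim(B_{L_I} x) = \dim(T x) + \ell(a)$. Since $\dim X_w^{\circ} = \ell(w) = \ell(a) + \ell(d)$ by length-additivity of the parabolic decomposition $w = ad$, and $\dim \Heart_I(X_w) = \dim X_d^{\circ} = \ell(d)$ by Proposition~\ref{proposition:tStableHeartIso}, subtracting gives
\[ \dim X_w^{\circ} - \dim \beta(\Theta) = \ell(d) - \dim(T x) = \dim \Heart_I(X_w) - \dim \Theta. \]

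The substantive work is already packaged in the preceding lemmas and corollary, so inside this proof the only delicate points are justifying that the reference point in Lemma~\ref{lemma:orbitStabilizer} may be taken to be the chosen $x \in \Theta$ (legitimate since $x$ itself lies in $(B_{L_I}x)\cap\Heart_I(X_w)$), and keeping the dimension bookkeeping straight --- in particular that the product $ad$ is length additive so $\ell(w) = \ell(a) + \ell(d)$, and that $\dim(T x) = \dim T - \dim T_x$. I expect no genuine obstacle beyond this bookkeeping.
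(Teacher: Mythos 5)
Your proposal is correct and follows essentially the same route as the paper: well-definedness from $T \leq B_{L_I}$, surjectivity via Lemma~\ref{lemma:orbitStabilizer}, injectivity via Lemma~\ref{lemma:orbitDynamicsRevisited}(3), and the codimension computation via the orbit--stabilizer dimension formula applied to $(B_{L_I})_x = T_x \ltimes U_{\bar a}$ together with the length-additivity of $w = ad$. The only cosmetic difference is that you take the reference point for Lemma~\ref{lemma:orbitStabilizer} to be the chosen $x \in \Theta$ itself (which is legitimate, as you note), whereas the paper phrases it via a point $h$ obtained from the surjectivity step.
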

\begin{proof}
We begin by noting that $T \leq B_{L_I}$ implies that this map is well-defined. 

Let $\Xi \in \mathcal{O}_{B_{L_I}}(X_w^{\circ})$. By Lemma~\ref{lemma:orbitStabilizer}, there exists an $h \in \Xi \cap \Heart_I(X_w)$. Thus $h \in \Theta$ for some $\Theta \in \mathcal{O}_T(\Heart_I(X_w))$ and so $\beta(\Theta) = \Xi$. Thus $\beta$ is surjective.

For the remainder of this proof we assume that $L_I$ acts on $X_w$; that is, by Lemma~\ref{lemma:equivalentCharUbara},  $U_{\bar{a}}$ is the trivial subgroup of $G$. Let $\Theta_1, \Theta_2 \in \mathcal{O}_T(\Heart_I(X_w))$ with $x \in \Theta_1$ and $y \in \Theta_2$. Suppose that $B_{L_I} \cdot x = B_{L_I} \cdot y$. This implies $b_1 x = b_2 y$ for some $b_1,b_2 \in B_{L_I}$. Thus $b_2^{-1} b_1 x = y$. Lemma~\ref{lemma:orbitDynamicsRevisited}(3) implies there exists a $t \in T$ and $u \in U_{\bar{a}}$ such that $t u x = y$. Since $U_{\bar{a}}$ is the trivial subgroup, this implies $t x = y$. Thus $\Theta_1 = \Theta_2$ and $\beta$ is injective. Thus $\beta$ is a bijection.

Let $\Xi \in \mathcal{O}_{B_{L_I}}(X_w^{\circ})$. As above, there exists an $h \in \Xi \cap \Heart_I(X_w)$ by Lemma~\ref{lemma:orbitStabilizer}. Thus $h \in \Theta$ for a unique $\Theta \in \mathcal{O}_T(\Heart_I(X_w))$ since $\beta$ is a bijection. We claim that 
\[(B_{L_I})_h = T_h.\]
Clearly $T_h \subseteq (B_{L_I})_h$. For the other direction, suppose $b h = t u_1 u_2 h = h$ with $t \in T$, $u_1 \in U_{\bar{a}}$, and $u_2 \in U_a$. Then Lemma~\ref{lemma:orbitDynamicsRevisited}(3) implies $u_2 = e$ and $t u_1 h = h$. Since $U_{\bar{a}}$ is the trivial subgroup, $u_1 = e$ and so $th=h$. We conclude $b = t \in T_h$.

One has, by \cite[Proposition 1.11]{B09}, that for any algebraic group $H$ and $H$-variety $X$, the orbit $H \cdot x$, $x \in X$, is a subvariety of $X$ of dimension $\dim H - \dim H_x$. Thus
\begin{align*}
  \dim(\Xi) & = \dim(B_{L_I}) - \dim((B_{L_I})_h) \\
  & =  \dim(B_{L_I}) - \dim(T_h) \\
  & = (\ell(w_0(I)) + \dim(T)) - ((\dim(T) - \dim(\Theta)) \\
  & = \ell(a) + \dim(\Theta),
\end{align*}
where the fourth equality is Lemma~\eqref{lemma:equivalentCharUbara}(4). Hence
\begin{align*}  
  \dim(X_w^{\circ}) - \dim(\Xi) & = (\ell(a) + \ell(d)) - (\ell(a) + \dim(\Theta)) \\
  & = \ell(d) - \dim(\Theta) \\
  & = \dim(\Heart_I(X_w)) - \dim(\Theta),
\end{align*}
where the first equality is the length additivity of $ad$, while the final equality follows from Proposition~\ref{proposition:tStableHeartIso}. 
\end{proof}

We are now able to prove Theorem~\ref{theorem:torusLeviBorelBijection}.

\noindent \textit{Proof of Theorem~\ref{theorem:torusLeviBorelBijection}} Set $a = {_Iw}$ and $d = \prescript{I}{}{w}$ to match the notation we have been using throughout this section. Then \cite[Lemma 3.2]{GHY24} tells us that the set of $T$-orbits in $X^{\circ}_{d}$ for the action $\bigdot_{a^{-1}}$ is identical to the set of $T$-orbits in $X^{\circ}_{d}$ for the action $\bigdot$. Hence, Proposition~\ref{proposition:tStableHeartIso} implies that the map $\mathfrak{T}:\mathcal{O}_T(X^{\circ}_{d}) \rightarrow \mathcal{O}_T(\Heart_I(X_w))$ given by $\Theta\mapsto T a x$, where $x$ is any point in $\Theta$, is a codimension preserving bijection. The map $\mathfrak{O}$ is precisely the composition of a surjection and a bijection, namely $\mathfrak{O} = \beta \circ \mathfrak{T}$. Hence $\mathfrak{O}$ is itself a surjection. In the case where $L_I$ acts on $X_w$, $\mathfrak{O}$ is the composition of two codimension-preserving bijections, and hence is a codimension preserving bijection. \qed


\begin{corollary}
\label{corollary:bigCellBIComplexity}
If $L_I$ acts on $X_w$, the minimal codimension of a $B_{L_I}$-orbit in $X^{\circ}_w$ equals the torus complexity of $X^{\circ}_d$ for the usual torus action. That is, $c_{L_I}(X^{\circ}_w) = c_{T}(X^{\circ}_d)$.
\end{corollary}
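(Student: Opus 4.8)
The plan is to read the statement off directly from Theorem~\ref{theorem:torusLeviBorelBijection}, so the proof is essentially a two-line bookkeeping argument. First I would unwind the two sides of the claimed equality. By Definition~\ref{def:complexity}, $c_T(X^{\circ}_d)$ is the minimum codimension of a $B_T$-orbit in $X^{\circ}_d$; since $T$ is a torus, its only Borel subgroup is $T$ itself, so $c_T(X^{\circ}_d)$ equals the minimum over all $\Theta \in \mathcal{O}_T(X^{\circ}_d)$ of $\dim(X^{\circ}_d) - \dim(\Theta)$. On the other side, adopting the abuse of notation introduced in the remark preceding the corollary, $c_{B_{L_I}}(X^{\circ}_w)$ is the minimum over all $\Xi \in \mathcal{O}_{B_{L_I}}(X^{\circ}_w)$ of $\dim(X^{\circ}_w) - \dim(\Xi)$.

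Next I would invoke the codimension-preserving bijection $\mathfrak{O}\colon \mathcal{O}_T(X^{\circ}_{\prescript{I}{}{w}}) \to \mathcal{O}_{B_{L_I}}(X_w^{\circ})$ of Theorem~\ref{theorem:torusLeviBorelBijection}, writing $d = \prescript{I}{}{w}$. Because $\mathfrak{O}$ is a bijection satisfying $\dim(X_w^{\circ}) - \dim(\mathfrak{O}(\Theta)) = \dim(X^{\circ}_d) - \dim(\Theta)$ for every $\Theta$, the set of codimensions $\{\dim(X^{\circ}_d) - \dim(\Theta) : \Theta \in \mathcal{O}_T(X^{\circ}_d)\}$ coincides with the set $\{\dim(X_w^{\circ}) - \dim(\Xi) : \Xi \in \mathcal{O}_{B_{L_I}}(X_w^{\circ})\}$; in particular the two sets have the same minimum. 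By the previous paragraph that common minimum is $c_T(X^{\circ}_d)$ on one side and $c_{B_{L_I}}(X^{\circ}_w)$ on the other, which is exactly the asserted identity. I do not expect any genuine obstacle here: the corollary is a formal consequence of Theorem~\ref{theorem:torusLeviBorelBijection}, and the only subtlety worth spelling out is that a torus is its own Borel subgroup, so that $c_T$ is computed directly from $\mathcal{O}_T$ and the bijection transfers the minimum verbatim.
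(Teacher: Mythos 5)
Your argument is correct and is precisely the intended one: the paper states this as an immediate consequence of Theorem~\ref{theorem:torusLeviBorelBijection} without further proof, and your two observations (that $T$ is its own Borel subgroup, so $c_T$ is computed over $\mathcal{O}_T$, and that a codimension-preserving bijection transfers the minimum codimension) are exactly the bookkeeping needed.
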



\subsection{Orbits in the Schubert Variety} \label{subsec:orbitsinSchubertVariety}
In this section let $w\in W$ and $I \subseteq \Delta$. For any $u \in W$, let $u = { _Iu} \prescript{I}{}{u}$ be the left parabolic decomposition of $u$ with respect to $I$. Our goal is to extend Corollary~\ref{corollary:bigCellBIComplexity} to a formula for the Levi complexity of a Schubert variety. To do this, we will need to lower bound the codimension of a $B_{L_I}$-orbit in $X^{\circ}_u$ for $u \leq w$. The added complication in this case is that $L_I$ may not act on $X_u$, though of course $B_{L_I}$ acts, and so $U_{\bar{a}}$ will not be the trivial subgroup of $G$. 


For a subvariety $X$ of a variety $Y$, we write $\codim_{Y}(X)$ for the codimension of $X$ in $Y$.
\begin{proposition}
\label{proposition:BLICodim}
Let $u \leq w \in W$. Let $\Xi \in \mathcal{O}_{B_{L_I}}(X^{\circ}_u)$. Let $h \in \Xi \cap \Heart_I(X_u)$ and $\Theta = T \cdot h$. Then
\begin{align*} \codim_{X^{\circ}_u}(\Xi) &\geq \ell(u) - \supp(\prescript{I}{}{u}) - \ell(w_0(I))
\end{align*}
and
\begin{align*} \codim_{X_w}(\Xi) \geq \ell(w) - \supp(\prescript{I}{}{u}) - \ell(w_0(I)).
\end{align*}
\end{proposition}
\begin{proof}
The second claimed inequality follows from the first combined with the fact that the codimension of $X^{\circ}_u$ in $X_w$ is $\ell(w)-\ell(u)$. Hence it remains to prove the first inequality.
By \cite[Proposition 1.11]{B09} $\dim(\Xi) = \dim(B_{L_I}) - \dim(B_h) \leq \dim(B_{L_I}) - \dim(T_h)$. Multiplying this inequality by $-1$ and adding  $\ell(u)$ yields
\begin{align}
\label{eq:codimlb1}
\begin{split}
\ell(u) - \dim(\Xi) &\geq \ell(u) - \dim(B_{L_I}) + \dim(T_h)\\
                    &= \ell({ _Iu}) + \ell(\prescript{I}{}{u}) - (\dim(T) + \ell(w_0(I))) + \dim(T_h) \\
                    &= \ell(\prescript{I}{}{u}) - (\dim(T) - \dim(T_h)) - (\ell(w_0(I)) - \ell({ _Iu})) \\
                    &= \ell(\prescript{I}{}{u}) - \dim(\Theta) - (\ell(w_0(I)) - \ell({ _Iu})),
\end{split}
\end{align}
where the first equality is the length additivity of $u = { _Iu} \prescript{I}{}{u}$ and the second equality is \cite[Proposition 1.11]{B09}. Proposition~\ref{proposition:tStableHeartIso} implies that $\Heart_I(X_u)$ is $T$-equivariantly isomorphic to $X^{\circ}_{\prescript{I}{}{u}}$. Hence, $\dim(\Heart_I(X_u)) = \ell(\prescript{I}{}{u})$ and 
\begin{align}
\label{eq:codimlb2}
\ell(\prescript{I}{}{u}) - \dim(\Theta) = \codim_{\Heart_I(X_u)}(\Theta) \geq c_T(X^{\circ}_{\prescript{I}{}{u}}) \geq c_T(X_{\prescript{I}{}{u}}).
\end{align}
Thus,
\begin{align*}
\codim_{X^{\circ}_u}(\Xi) &= \dim(X^{\circ}_u) - \dim(\Xi) = \ell(u) - \dim(\Xi)\\
&\geq \ell(\prescript{I}{}{u}) - \dim(\Theta) - (\ell(w_0(I)) - \ell({ _Iu})) 
 \geq c_T(X_{\prescript{I}{}{u}}) - (\ell(w_0(I)) - \ell({ _Iu})) \\
&= \ell(\prescript{I}{}{u}) - \supp(\prescript{I}{}{u}) - (\ell(w_0(I)) - \ell({ _Iu})) 
= \ell(u) - \supp(\prescript{I}{}{u}) - \ell(w_0(I)),
\end{align*}
where the first inequality is \eqref{eq:codimlb1}, the second inequality is \eqref{eq:codimlb2}, the third equality is Corollary~\ref{cor:torus-complexity-Schubert}, and the final inequality is the length additivity of $u = { _Iu} \prescript{I}{}{u}$.
\end{proof}

We are now ready to prove Theorem~\ref{theorem:generalTypeLeviComplexity}.
\begin{proof}[Proof of Theorem~\ref{theorem:generalTypeLeviComplexity}]
The equivalence of $L_I$ acting on $X_w$ and $I \subseteq \mathcal{D}_L(w)$ is Lemma~\ref{lemma:equivalentCharUbara}.

The Bruhat decomposition tells us that the Schubert variety $X_w$ is the disjoint union of the Schubert cells for $u \leq w$, $X_w = \bigsqcup_{u \leq w} X_u^{\circ}$, and hence 
\[
\mathcal{O}_{B_{L_I}}(X_w) = \bigsqcup_{u \leq w} \mathcal{O}_{B_{L_I}}(X^{\circ}_u).
\]
Thus
\begin{align*}
c_{B_{L_I}}(X_w) &= \min_{\substack{u \leq w \\ \Xi \in \mathcal{O}_{B_{L_I}}(X^{\circ}_u)}} \codim_{X_w}(\Xi)  \\
                 &\geq \min_{u \leq w}  \left( \ell(w) - \supp(\prescript{I}{}{u}) - \ell(w_0(I)) \right) \\
                 &= \ell(w)-\ell(w_0(I)) + \min_{u \leq w} \left(- \supp(\prescript{I}{}{u})\right) \\
                 &= \ell(\prescript{I}{}{w}) - \max_{u \leq w} \left(\supp(\prescript{I}{}{u}) \right) \\
                 &= \ell(\prescript{I}{}{w}) - \supp(\prescript{I}{}{w}),
\end{align*}
where the inequality is Proposition~\ref{proposition:BLICodim}, the third equality follows from Lemma~\ref{lemma:equivalentCharUbara}, and the final equality follows from the fact that $u \leq w$ implies $\prescript{I}{}{u} \leq \prescript{I}{}{w}$.

Corollary~\ref{corollary:bigCellBIComplexity} and Corollary~\ref{cor:torus-complexity-Schubert} imply that this lower bound on $c_{B_{L_I}}(X_w)$ is in fact achieved. We conclude that $c_{B_{L_I}}(X_w) = \ell(\prescript{I}{}{w}) + \supp(\prescript{I}{}{w})$. 
\end{proof}


\subsection{Complexity in the partial flag variety}
For $J\subset\Delta$, $G/P_J$ is called a \emph{partial flag variety}. For $w \in W^J$, the $B$-orbit of $w P_J$ is referred to as the \emph{Schubert cell} $X^{J, \circ}_w$; the closure of $X^{J, \circ}_w$ in $G/P_J$ with respect to the Zariski topology is the \emph{Schubert variety} $X^{J}_w$. When $J = \emptyset$, then $P_J = B$ and $G/B$ is the full flag variety. In this case, for $w \in W^{\emptyset} = W$ we shall continue to write $X_w^{\circ}$ and $X_w$ for the Schubert cell and variety in $G/B$, respectively. The goal of this section is to extend, where possible, the Levi-Borel complexity results of Section~\ref{subsec:orbitsinSchubertVariety} to Schubert varieties in the partial flag variety. 

We recall the relationship between Schubert varieties in the partial flag variety and Schubert varieties in the full flag variety. 
For any $w \in W^J$, the canonical projection $\pi_{B,P_J}: G/B \rightarrow G/P_J$ is $G$-equivariant and restricts to a birational morphism $\pi_{B,P_J}|_{X_w}: X_w \rightarrow X^{P_J}_w$. The inverse image is $\pi_{B,P_J}^{-1}(X^{P_J}_w) = X_{w w_0(J)}$.

\begin{lemma}\label{lemma:stabilizer} 
Let $w \in W^J$. Then ${\rm stab}_{G}(X^{P_J}_w) = P_{D_L(w w_0(J))}$.
\end{lemma}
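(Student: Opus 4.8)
The plan is to reduce the statement to the known description of the stabilizer of a Schubert variety in the full flag variety, namely that $\mathrm{stab}_G(X_v) = P_{\mathcal{D}_L(v)}$ for $v \in W$ by \cite[Lemma 8.2.3]{BL00}. First I would invoke the identification $\pi_{B,P_J}^{-1}(X^{P_J}_w) = X_{ww_0(J)}$ recalled just above the lemma. Since $\pi_{B,P_J}$ is $G$-equivariant, a subgroup $H \leq G$ stabilizes $X^{P_J}_w$ as a subset of $G/P_J$ if and only if it stabilizes its preimage $\pi_{B,P_J}^{-1}(X^{P_J}_w)$ as a subset of $G/B$; this is because $\pi_{B,P_J}$ is surjective, so $g \cdot X^{P_J}_w = X^{P_J}_w$ is equivalent to $g \cdot \pi_{B,P_J}^{-1}(X^{P_J}_w) = \pi_{B,P_J}^{-1}(X^{P_J}_w)$. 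Therefore $\mathrm{stab}_G(X^{P_J}_w) = \mathrm{stab}_G(X_{ww_0(J)}) = P_{\mathcal{D}_L(ww_0(J))}$, and the proof is essentially complete.

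The one genuinely nontrivial point I should address carefully is well-definedness of this reduction: I want $ww_0(J)$ to be a legitimate index for a Schubert variety in the full flag variety, and I want to know that $X_{ww_0(J)}$ is genuinely the full preimage. The identity $\pi_{B,P_J}^{-1}(X^{P_J}_w) = X_{ww_0(J)}$ is exactly the relationship recalled in the paragraph preceding the lemma (with $ww_0(J)$ being the maximal-length coset representative when $w \in W^J$, so that $\ell(ww_0(J)) = \ell(w) + \ell(w_0(J))$), so I may simply cite it. I expect this bookkeeping — confirming that the stated preimage formula applies verbatim and that equivariance transfers stabilizers cleanly across the surjection — to be the main (and really the only) obstacle, and it is a mild one.

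To summarize the argument I would write: let $v := ww_0(J)$, which is length-additive since $w \in W^J$. By the $G$-equivariance and surjectivity of $\pi_{B,P_J}$ together with $\pi_{B,P_J}^{-1}(X^{P_J}_w) = X_v$, we get $\mathrm{stab}_G(X^{P_J}_w) = \mathrm{stab}_G(X_v)$. By \cite[Lemma 8.2.3]{BL00}, $\mathrm{stab}_G(X_v) = P_{\mathcal{D}_L(v)} = P_{\mathcal{D}_L(ww_0(J))}$, which gives the claim.
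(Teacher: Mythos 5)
Your proposal is correct and follows essentially the same route as the paper: both reduce the claim to the identity $\mathrm{stab}_G(X^{P_J}_w)=\mathrm{stab}_G(\pi_{B,P_J}^{-1}(X^{P_J}_w))$ using the $G$-equivariance and surjectivity of $\pi_{B,P_J}$, then invoke $\pi_{B,P_J}^{-1}(X^{P_J}_w)=X_{ww_0(J)}$ and \cite[Lemma 8.2.3]{BL00}. The paper merely spells out the two inclusions that your one-line equivariance argument compresses.
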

\begin{proof}
We have ${\rm stab}_{G}(\pi_{B,P_J}^{-1}(X^{P_J}_w))\!=\!{\rm stab}_{G}(X_{w w_0(J)})\!=\!P_{D_L(w w_0(J))}$ by \cite[Lemma~8.2.3]{BL00}. Thus our result will follow if we show \[{\rm stab}_{G}(X^{P_J}_w)={\rm stab}_{G}(\pi_{B,P_J}^{-1}(X^{P_J}_w)).\]

We first show ${\rm stab}_{G}(X^{P_J}_w) \subseteq {\rm stab}_{G}(\pi_{B,P_J}^{-1}(X^{P_J}_w))$. Let $g \in {\rm stab}_{G}(X^{P_J}_w)$ and $x \in \pi_{B,P_J}^{-1}(X^{P_J}_w)$. Then $\pi_{B,P_J}(x) \in X^{P_J}_w$, and thus $g \in {\rm stab}_{G}(X^{P_J}_w)$ implies $g \pi_{B,P_J}(x) \in X^{P_J}_w$. The $G$-equivariance of $\pi_{B,P_J}$ yields $\pi_{B,P_J}(gx) \in X^{P_J}_w$, which implies $gx \in \pi_{B,P_J}^{-1}(X^{P_J}_w)$. We conclude $g \in {\rm stab}_{G}(\pi_{B,P_J}^{-1}(X^{P_J}_w))$.

Now we verify  ${\rm stab}_{G}(X^{P_J}_w) \supseteq {\rm stab}_{G}(\pi_{B,P_J}^{-1}(X^{P_J}_w))$. Let $g \in {\rm stab}_{G}(\pi_{B,P_J}^{-1}(X^{P_J}_w))$, $x \in X^{P_J}_w$, and $z \in \pi_{B,P_J}^{-1}(x)$. Then $gx=g\pi_{B,P_J}(z)=\pi_{B,P_J}(gz)$ by the $G$-equivariance of the map. Now since $g \in {\rm stab}_{G}(\pi_{B,P_J}^{-1}(X^{P_J}_w))$, we have $gz\!\in\!\pi_{B,P_J}^{-1}(X^{P_J}_w)$. As desired, we have $gx = \pi_{B,P_J}(gz) \in X^{P_J}_w$ and thus $g \in {\rm stab}_{G}(X^{P_J}_w)$.
\end{proof}

\begin{theorem} 
\label{thm:sphericalityTransfer}
Let $w \in W^J$ and $I \subseteq D_L(w w_0(J))$. Then $L_I$ acts on $X^{P_J}_w$. If $L_I$ also acts on $X_w$ (equivalently if $I \subseteq P_{D_L(w)}$), then $c_{L_I}(X^{P_J}_w) = c_{L_I}(X_w)$.
\end{theorem}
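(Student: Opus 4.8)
The plan is to obtain both assertions from Lemma~\ref{lemma:stabilizer} together with the $G$-equivariant birational projection $\pi := \pi_{B,P_J}|_{X_w}\colon X_w \to X^{P_J}_w$ recalled above. For the first assertion, Lemma~\ref{lemma:stabilizer} gives ${\rm stab}_G(X^{P_J}_w) = P_{D_L(ww_0(J))}$; since $I \subseteq D_L(ww_0(J))$ we have $L_I \subseteq P_I \subseteq P_{D_L(ww_0(J))}$, so $L_I$ stabilizes, and hence acts on, $X^{P_J}_w$.

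Now assume in addition that $L_I$ acts on $X_w$. Then $\pi$ is $L_I$-equivariant, since its source and target are both $L_I$-stable and $\pi_{B,P_J}$ is $G$-equivariant, and a fortiori $B_{L_I}$-equivariant; it is also surjective, a birational morphism of irreducible projective varieties being dominant with closed image, and since $w \in W^J$ both $X_w$ and $X^{P_J}_w$ are irreducible of dimension $\ell(w)$. For an irreducible $L_I$-variety $X$, write $d_{L_I}(X)$ for the maximal dimension of a $B_{L_I}$-orbit in $X$, so that $c_{L_I}(X) = \dim X - d_{L_I}(X)$ by Definition~\ref{def:complexity}. It therefore suffices to prove $d_{L_I}(X_w) = d_{L_I}(X^{P_J}_w)$. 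The inequality $d_{L_I}(X^{P_J}_w) \le d_{L_I}(X_w)$ is formal: by surjectivity every $B_{L_I}$-orbit in $X^{P_J}_w$ is $B_{L_I}\cdot\pi(x) = \pi(B_{L_I}\cdot x)$ for some $x \in X_w$, so it has dimension at most $d_{L_I}(X_w)$. For the reverse inequality I would invoke birationality: fix dense open subsets $U \subseteq X_w$ and $V \subseteq X^{P_J}_w$ with $\pi|_U\colon U \to V$ an isomorphism of varieties, and let $X_w^{\mathrm{gen}} \subseteq X_w$ be the locus where the $B_{L_I}$-orbit attains dimension $d_{L_I}(X_w)$, which is open by semicontinuity of orbit dimension and nonempty by construction. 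Pick $x \in U \cap X_w^{\mathrm{gen}}$. Then $(B_{L_I}\cdot x) \cap U$ is a nonempty open, hence dense, subset of the irreducible orbit $B_{L_I}\cdot x$, so it has dimension $d_{L_I}(X_w)$; since $\pi|_U$ is an isomorphism onto its image and $\pi\big((B_{L_I}\cdot x)\cap U\big) \subseteq \pi(B_{L_I}\cdot x) = B_{L_I}\cdot\pi(x)$, it follows that $\dim\big(B_{L_I}\cdot\pi(x)\big) \ge d_{L_I}(X_w)$, whence $d_{L_I}(X^{P_J}_w) \ge d_{L_I}(X_w)$. Combining the two inequalities with $\dim X_w = \dim X^{P_J}_w = \ell(w)$ gives $c_{L_I}(X^{P_J}_w) = c_{L_I}(X_w)$.

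The inputs here --- semicontinuity of orbit dimension, the identity $c_{L_I}(X) = \dim X - d_{L_I}(X)$ for irreducible $X$, and the equivariance of the restricted map $\pi$ --- are all standard. The only point requiring genuine care, and the one I expect to be the main (if mild) obstacle, is that birationality alone supplies an isomorphism over a dense open $U$ that need not be $B_{L_I}$-stable; the fix is to run the dimension count on a single generic $B_{L_I}$-orbit meeting $U$, as above, rather than attempting to compare the two orbit stratifications wholesale.
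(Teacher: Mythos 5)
Your proof is correct and follows essentially the same route as the paper: both deduce the $L_I$-action on $X^{P_J}_w$ from Lemma~\ref{lemma:stabilizer} and then reduce to the fact that $\pi_{B,P_J}|_{X_w}\colon X_w \to X^{P_J}_w$ is an $L_I$-equivariant birational morphism. The only difference is that the paper simply cites \cite[\S 2]{P10} for the invariance of $L_I$-complexity under equivariant birational maps, whereas you supply a correct self-contained verification of that fact via semicontinuity of orbit dimension and a dimension count on a single generic $B_{L_I}$-orbit meeting the locus where $\pi$ is an isomorphism.
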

\begin{proof} 
The Levi subgroup $L_I$ acts on $X^{P_J}_w$ by Lemma~\ref{lemma:stabilizer}. If $I \subseteq P_{D_L(w)}$, then $L_I$ acts on $X_w$. Thus, since $\pi_{B,P_J}: G/B \rightarrow G/P_J$ is $G$-equivariant, we have that $\pi_{B,P_J}|_{X_w}: X_w \rightarrow X^{P_J}_w$ is an $L_I$-equivariant birational morphism. And $L_I$-complexity is invariant under $L_I$-equivariant birational maps\cite[\S 2]{P10}. Thus $c_{L_I}(X^{P_J}_w) = c_{L_I}(X_w)$.
\end{proof}
We highlight that this \textit{does not} reduce the Levi complexity of Schubert varieties in the partial flag variety to that of the full flag variety. This reduction only works if $L_I$ acts on both $X^{P_J}_w$ and $X_w$, and importantly ${\rm stab}_{G}(X_w)$ is only a subgroup of ${\rm stab}_{G}(X^{P_J}_w)$. However, this does resolve the case of $L_I = T$, since $T$ acts on any Schubert variety in any partial flag variety.

\begin{corollary}\label{cor:toricClassification}
Let $w \in W^J$. Then $c_{T}(X^{P_J}_w) = c_{T}(X_w)$. In particular, $X^{P_J}_w$ is a toric variety if and only if each $s_j \in S$ appears at most once in any reduced expression of $w$, or equivalently, $\ell(w) = \supp(w)$.
\end{corollary}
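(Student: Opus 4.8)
The plan is to deduce both assertions directly from Theorem~\ref{thm:sphericalityTransfer} and Corollary~\ref{cor:torus-complexity-Schubert}, so the argument will be short. First I would establish the equality $c_T(X^{P_J}_w) = c_T(X_w)$ by invoking Theorem~\ref{thm:sphericalityTransfer} in the special case $I = \emptyset$: the maximal torus $T = L_\emptyset$ acts on every Schubert variety in every (partial) flag variety, and the hypotheses $I \subseteq \mathcal{D}_L(w w_0(J))$ and $I \subseteq \mathcal{D}_L(w)$ are vacuous, so the conclusion $c_{L_I}(X^{P_J}_w) = c_{L_I}(X_w)$ applies verbatim. (If one prefers not to quote Theorem~\ref{thm:sphericalityTransfer}, the same follows directly from the fact that $\pi_{B,P_J}|_{X_w}\colon X_w \to X^{P_J}_w$ is a $T$-equivariant birational morphism for $w \in W^J$, together with birational invariance of $T$-complexity \cite[\S 2]{P10}.)

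Next I would pass to the toricity criterion. Since Schubert varieties in partial flag varieties are normal, $X^{P_J}_w$ is a normal $T$-variety, and such a variety is toric exactly when its $T$-complexity is $0$ (as recorded in the introduction). Combining this with the equality just proved and with Corollary~\ref{cor:torus-complexity-Schubert} gives
\[
X^{P_J}_w \text{ toric} \iff c_T(X^{P_J}_w) = 0 \iff c_T(X_w) = 0 \iff \ell(w) - \supp(w) = 0 .
\]

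Finally I would record the elementary equivalence between $\ell(w) = \supp(w)$ and the condition that no simple reflection repeats in a reduced word of $w$: every reduced expression of $w$ has length $\ell(w)$ and uses precisely the simple reflections indexed by $\Supp(w)$, a set of size $\supp(w)$; hence if $\ell(w) = \supp(w)$ a pigeonhole count forces each such reflection to occur exactly once in every reduced word, while conversely if some reduced word has no repeated letter then its length $\ell(w)$ equals its number of distinct letters, namely $\supp(w)$.

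The one point worth stating carefully — rather than a genuine obstacle — is the normality of $X^{P_J}_w$, which is what licenses the ``$T$-complexity $0$ $\Leftrightarrow$ toric'' dictionary; beyond that, and the small bookkeeping in the last paragraph, there is nothing substantive to prove, as all the work has already been done in Theorem~\ref{thm:sphericalityTransfer} and Corollary~\ref{cor:torus-complexity-Schubert}.
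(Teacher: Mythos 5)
Your proposal is correct and follows the same route as the paper: the paper's proof likewise specializes Theorem~\ref{thm:sphericalityTransfer} to $I=\emptyset$ (where the descent-set hypotheses hold trivially) to get $c_T(X^{P_J}_w)=c_T(X_w)$, and leaves the toricity statement to follow from Corollary~\ref{cor:torus-complexity-Schubert} together with normality of Schubert varieties. The extra bookkeeping you supply for the equivalence $\ell(w)=\supp(w)$ with the no-repeated-letter condition is harmless and accurate.
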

\begin{proof}
Let $I = \emptyset$. Then $L_I = T$ and $I \subseteq P_{D_L(w w_0(J))}$ and $I \subseteq P_{D_L(w)}$. Thus, by Theorem \ref{thm:sphericalityTransfer}, $c_{T}(X^{P_J}_w) = c_{T}(X_w)$.
\end{proof}

\section*{Acknowledgements}
We thank Alexander Yong for many helpful conversations throughout the development of this paper.

\end{document}